\documentclass[english]{amsart}

\usepackage{esint}
\usepackage[svgnames]{xcolor} 
\usepackage{color}
\usepackage[colorlinks,citecolor=red,pagebackref,hypertexnames=false,breaklinks]{hyperref}
\usepackage{pgf,tikz}
\usepackage{pdfsync}

\usepackage{dsfont}
\usepackage{url}
\usepackage[utf8]{inputenc}
\usepackage[T1]{fontenc}
\usepackage{lmodern}
\usepackage{babel}
\usepackage{mathtools}  
\usepackage{amssymb}
\usepackage{lipsum}
\usepackage{mathrsfs}
\usepackage{color}
\usepackage[skip=2.2pt plus 1pt, indent=12pt]{parskip}
\usepackage{stmaryrd}
\usepackage{soul}
\usepackage{ulem}

\newtheorem{theorem}{Theorem}[section]
\newtheorem{proposition}{Proposition}[section]
\newtheorem{lemma}{Lemma}[section]

\newtheorem{corollary}{Corollary}[section]

\newtheorem{remark}{Remark}[section]
\newtheorem{assumption}{Assumption}[section]

\numberwithin{equation}{section}

\title[Spectral Barron spaces]{Functional Analysis and Partial Differential Equations in Spectral Barron Spaces}

\author{Mourad Choulli}
\address{Universit\'{e} de Lorraine, 34 cours L\'{e}opold, 54052 Nancy cedex, France}
\email{mourad.choulli@univ-lorraine.fr}

\author{Shuai Lu}
\address{School of Mathematics Sciences, SKLCAM and LMNS, Fudan University, 220 Handan Road, Shanghai 200433, China}
\email{slu@fudan.edu.cn}

\author{Hiroshi Takase}
\address{Department of Mathematics, Okayama University, 3-1-1 Tsushima-naka, Kita-ku, Okayama 700-8530, Japan}
\email{takase@math.okayama-u.ac.jp}

\date{}

\begin{document}
\begin{abstract}
Spectral Barron spaces, constituting a specialized class of function spaces that serve as an interdisciplinary bridge between mathematical analysis, partial differential equations (PDEs), and machine learning, are distinguished by the decay profiles of their Fourier transform. In this work, we shift from conventional numerical approximation frameworks to explore advanced functional analysis and PDE theoretic perspectives within these spaces. Specifically, we present a rigorous characterization of the dual space structure of spectral Barron spaces, alongside continuous embedding in H\"older spaces established through real interpolation theory. Furthermore, we investigate applications to boundary value problems governed by the Schr\"odinger equation, including spectral analysis of associated linear operators. These contributions elucidate the analytical foundations of spectral Barron spaces while underscoring their potential to unify approximation theory, functional analysis, and machine learning.
\end{abstract}

\subjclass[2020]{46B10, 46B50, 46B70, 35J10, 35J25, 35R30}

\keywords{Spectral Barron spaces, functional analysis, partial differential equations}

\maketitle

\tableofcontents

\section{Introduction}

Spectral Barron spaces constitute a specialized subset of function spaces that have recently emerged as a focal point of interdisciplinary research, acting as an important bridge between mathematical analysis, the theory of partial differential equations (PDEs), and the field of machine learning (see e.g., \cite{Ba, CLLZ, LuM, MM, SX, Xu} and references therein). Functions within these spectral Barron spaces are distinguished by their Fourier transform. The very definition of these spaces hinges on the imposition of specific conditions of integrability of these Fourier transforms, which, in turn, reflect the smoothness characteristics of the functions they represent.

The increasing interest in such spaces was significantly catalyzed by Barron's seminal work \cite{Ba}. In this foundational study, it was demonstrated that any function \( g \) defined on a bounded domain, given that it meets a certain condition pertaining to its Fourier transform, can be efficiently approximated in the \( L^2 \) norm by a two layer neural network. Specifically, if a function \( g \) is an element of a spectral Barron space, it thereby adheres to the following criterion:
\[ C_g := \int_{\mathbb{R}^n} |\xi||\hat{g}(\xi)|  d\xi < \infty, \]
where \( \hat{g} \) denotes the Fourier transform of \( g \), then \( g \) can be approximated by a two layer neural network \( g_m \) of the form
\[ g_m(x) = \sum_{j = 1}^m a_j \sigma(w_j \cdot x + b_j) + c.  \]
Here, \( \sigma \) represents the activation function. Notably, the number of neurons \( m \) exhibits at most polynomial dependence on the dimension \( n \), under the assumption that \( C_g \) is treated as a constant. As elucidated above, the weight coefficients \( a_j \) and \( w_j \), in conjunction with the bias coefficients \( c \) and \( b_j \) in the outer and inner layers of the neural network \( g_m \), play crucial roles in governing its approximation properties.
The quantitative universal approximation result established in \cite{Ba} shows that for any function \( g \) defined on a bounded domain and satisfying the aforementioned decay properties of its Fourier transform, there exists a shallow neural network \( g_m \) that approximates \( g \) uniformly with an error bound of \( O\left(\frac{1}{\sqrt{m}}\right) \).

In recent years, there has been a growing focus in the study of spectral Barron spaces, promoted by the global explosion of interest in deep learning and deep neural networks. The previously established plain decay property has been extended to a more comprehensive framework. In this new framework, the following decay condition is taken into account:
\[ C_{g,\varphi} := \int_{\mathbb{R}^n} \varphi(\xi) |\hat{g}(\xi)|  d\xi < \infty, \]
where \( \varphi(\xi) \) is a general weight function. This weight function can take various forms, such as \( \varphi(\xi) := (1 + |\xi|)^s \) or \( \varphi(\xi) := (1 + |\xi|^2)^{s/2} \), cf. \cite{CLLZ,LuM,SX,Xu}. In this context, the smoothing index \( s \) highlights the smoothness of the function, with larger values of \( s \) corresponding to smoother functions.
Recent studies, such as those in \cite{CLLZ, LuM, MM, Xu}, have demonstrated that for specific smoothness indices \( s \), the quantitative universal approximation rate  \( O\left(\frac{1}{\sqrt{m}}\right) \) remains valid under different decay conditions. These findings have sparked a growing interest in unraveling the relationships between functional decay behaviors and the approximation capabilities of neural networks. It is noteworthy that the latter weight function \( \varphi(\xi)=(1 + |\xi|^2)^{s/2} \) aligns with the Bessel potential function spaces introduced in the seminal work \cite{Ho}. The Bessel potential of order \( s \) in the context of distributions, defined as \( (I - \Delta)^{-s/2}\delta(x) \), can be formally defined via its Fourier transform. For a function \( f \) belonging to an appropriate function space, the Bessel potential of \( f \) is expressed as
\[ \mathscr{F}\left((1 - \Delta)^{-s/2}f\right)(\xi) = (1 + |\xi|^2)^{-s/2}\mathscr{F}f(\xi). \]
This expression precisely aligns with the form of the weight function mentioned above.

In the current work, while the majority of existing research concerning spectral Barron spaces has predominantly centered around numerical approximation or optimization within the realm of neural networks, our focus diverges towards functional analysis and PDEs in the context of spectral Barron spaces. Specifically, our objective is to identify the types of mathematical problems that can be naturally described within the framework of spectral Barron spaces. Consequently, these problems are well-suited to be solved by (deep) neural networks, with the property of quantitative universal approximation naturally holding true. The fundamental properties of spectral Barron spaces, denoted as $B^s$ where $s \geq 0$, have already been thoroughly investigated and established through a series of research endeavors (for instance, see \cite{CLLZ, LM, LuM, MM} and the references cited therein). One of the primary objectives of the current study is to achieve a comprehensive understanding of these spaces by filling in the remaining gaps in their characterization. Our key contributions to this field are multifaceted. Firstly, we provide a detailed characterization of the dual space of $B^s$, thereby enhancing the theoretical framework surrounding these spaces. Secondly, we establish the compact embedding property for $B^s$ spaces, which is crucial for understanding the behavior of sequences within these spaces. Thirdly, we elucidate the relationship between spectral Barron spaces and real interpolation spaces, offering new insights into the interpolation theory of these functional spaces. Furthermore, we demonstrate the embedding of $B^s$ spaces into H\"older spaces, thereby bridging the gap between these two important classes of function spaces. Similar to \cite{LuM}, another finding of our work is that the Laplacian operator defines a sectorial operator on $B^0$. This property is instrumental in enabling functional calculus within spectral Barron spaces, thus opening up new avenues for the analysis of partial differential equations in these spaces. Leveraging these novel properties of spectral Barron spaces, we proceed to investigate the Schr\"odinger equation in the entire space. We also explore boundary value problems (BVPs) within bounded domains, providing a more comprehensive understanding of the behavior of solutions to these equations in spectral Barron spaces. Moreover, we conduct a spectral analysis of the relevant operators, shedding light on their spectral properties. 

\textbf{Main notations.} 
For the sake of brevity and simplicity in notation, we shall employ abbreviated forms such as the Schwartz space $\mathscr{S}$ and its dual space, the
space of tempered distributions $\mathscr{S}'$, the Banach space $L^p$ with $p\geq 1$, and so forth, instead of the more cumbersome $\mathscr{S}(\mathbb{R}^n)$, $\mathscr{S}'(\mathbb{R}^n)$, $L^p(\mathbb{R}^n)$, etc. 
The Fourier transform, which will be utilized throughout our discussion, is defined for a function $\varphi$ belonging to $\mathscr{S}$ as follows:
\[
\hat{\varphi}(\xi) = \mathscr{F}\varphi(\xi) := \int_{\mathbb{R}^n} e^{-i x \cdot \xi} \varphi(x) \, dx, \quad \xi \in \mathbb{R}^n.
\]
The inverse Fourier transform, denoted by $\mathscr{F}^{-1}$, is given by
\[
\mathscr{F}^{-1}\varphi(\xi) = (2\pi)^{-n} \int_{\mathbb{R}^n} e^{i x \cdot \xi} \varphi(x) \, dx, \quad \xi \in \mathbb{R}^n, \; \varphi \in \mathscr{S}.
\]
In cases where there is no risk of ambiguity, we will also use the notations $\mathscr{F}$ and $\mathscr{F}^{-1}$ to represent the Fourier transform and its inverse when acting on the space of tempered distributions $\mathscr{S}'$. We recall that the Fourier transform on $\mathscr{S}'$ is defined via the duality pairing as follows:
\[
\langle \hat{f}, \varphi \rangle = \langle \mathscr{F}f, \varphi \rangle := \langle f, \hat{\varphi} \rangle, \quad f \in \mathscr{S}', \; \varphi \in \mathscr{S}.
\]
Here, and henceforth throughout this work, the notation $\langle \cdot, \cdot \rangle$ is employed to denote the duality pairing between $\mathscr{S}$ and $\mathscr{S}'$. 
In the subsequent discussion, the norm associated with the Lebesgue space $L^p$ for $1 \leq p \leq \infty$ will be represented as $\|\cdot\|_p$. Additionally, we define the space $C_b^0$ as  the space of bounded continuous functions, i.e., $C_b^0 := L^{\infty} \cap C^0$. 
 
We will also adopt the following standard notations throughout our discussion: for a multi-index $\alpha = (\alpha_1, \ldots, \alpha_n) \in (\mathbb{N}\cup\{0\})^n$ and a vector $\xi = (\xi_1, \ldots, \xi_n) \in \mathbb{R}^n$, we define $|\alpha| := \sum_{j=1}^{n} \alpha_j$, $\xi^\alpha := \xi_1^{\alpha_1} \ldots \xi_n^{\alpha_n}$, and $\partial^\alpha := \partial_1^{\alpha_1} \ldots \partial_n^{\alpha_n}$. Here, $\partial_j$ denotes the partial derivative with respect to the $j$-th variable. 
Additionally, we will use the notation $\langle \xi \rangle := \sqrt{1 + |\xi|^2}$, where $|\xi|^2 = \sum_{j=1}^{n} \xi_j^2$, throughout this text. 
Furthermore, when dealing with two Banach spaces $X$ and $Y$, the notation $Y \hookrightarrow X$ will be used to indicate that $Y$ is continuously embedded in $X$.

 \section{Spectral Barron spaces and their properties}
In this section, we shall present a comprehensive definition of spectral Barron spaces, along with an exploration of their detailed properties.

\subsection{Spectral Barron spaces $B^0$}

Before giving a precise definition of the spectral Barron space $B^0$, we first present some necessary preliminaries, which begin by proving the following lemma.
\begin{lemma}\label{lemB1}
Let $h\in L^1$. Then $\mathscr{F}^{-1}h\in C_b^0$, and
\begin{equation}\label{B1}
\mathscr{F}^{-1}h=\lim_{j\rightarrow\infty}\mathscr{F}^{-1}\varphi_j,
\end{equation}
where $(\varphi_j)$ is an arbitrary sequence in the Schwartz space $\mathscr{S}$ that converges to $h$ in the $L^1$ norm.
\end{lemma}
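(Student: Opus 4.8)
The plan is to exploit the density of $\mathscr{S}$ in $L^1$ together with the fact that the Fourier transform (and its inverse) maps $L^1$ continuously into $L^\infty$. First I would recall that for $\psi \in \mathscr{S}$ one has the elementary bound $\|\mathscr{F}^{-1}\psi\|_\infty \le (2\pi)^{-n}\|\psi\|_1$, which follows directly from the integral formula defining $\mathscr{F}^{-1}$ by placing absolute values under the integral sign. Since $\mathscr{S}$ is dense in $L^1$, choose any sequence $(\varphi_j) \subset \mathscr{S}$ with $\varphi_j \to h$ in $L^1$; such a sequence exists and $(\varphi_j)$ is Cauchy in $L^1$. By linearity and the bound above, $\|\mathscr{F}^{-1}\varphi_j - \mathscr{F}^{-1}\varphi_k\|_\infty \le (2\pi)^{-n}\|\varphi_j - \varphi_k\|_1$, so $(\mathscr{F}^{-1}\varphi_j)$ is Cauchy in the Banach space $C_b^0 = L^\infty \cap C^0$ equipped with the sup norm (completeness of $C_b^0$ being standard: a uniform limit of bounded continuous functions is bounded and continuous). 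Hence $(\mathscr{F}^{-1}\varphi_j)$ converges in $C_b^0$ to some limit $u$.

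Next I would identify this limit $u$ with $\mathscr{F}^{-1}h$ in the sense of tempered distributions, thereby showing in particular that $\mathscr{F}^{-1}h \in C_b^0$. Since $h \in L^1 \subset \mathscr{S}'$ and $\varphi_j \to h$ in $L^1$ implies $\varphi_j \to h$ in $\mathscr{S}'$ (as $|\langle \varphi_j - h, \phi\rangle| \le \|\varphi_j - h\|_1 \|\phi\|_\infty$ for $\phi \in \mathscr{S}$), and since $\mathscr{F}^{-1}$ is continuous on $\mathscr{S}'$, we get $\mathscr{F}^{-1}\varphi_j \to \mathscr{F}^{-1}h$ in $\mathscr{S}'$. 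On the other hand $\mathscr{F}^{-1}\varphi_j \to u$ uniformly, hence also in $\mathscr{S}'$ (uniform convergence of a sequence of bounded functions gives convergence against any fixed $\mathscr{S}$ test function by dominated convergence, using that a convergent sequence in $C_b^0$ is uniformly bounded). By uniqueness of limits in $\mathscr{S}'$, $\mathscr{F}^{-1}h = u \in C_b^0$, and $\mathscr{F}^{-1}h = \lim_j \mathscr{F}^{-1}\varphi_j$ with convergence in $C_b^0$, which is exactly \eqref{B1}. Since the argument works for an arbitrary such sequence $(\varphi_j)$, the limit in \eqref{B1} is independent of the choice of approximating sequence.

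I do not anticipate a serious obstacle here; the only points requiring a little care are (i) invoking the right density statement — $\mathscr{S}$ is dense in $L^1$ — and (ii) reconciling the two notions of convergence (uniform versus distributional) to pin down the limit, which is handled cleanly by passing everything through $\mathscr{S}'$ and using uniqueness of distributional limits together with the continuity of $\mathscr{F}^{-1}$ on $\mathscr{S}'$. One should also state explicitly that $C_b^0$ with the sup norm is complete, so that the Cauchy sequence $(\mathscr{F}^{-1}\varphi_j)$ indeed has a limit in that space; this is where the continuity of the limit (not just its boundedness and measurability) is secured, since a uniform limit of continuous functions is continuous.
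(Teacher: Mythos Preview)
Your proposal is correct and follows essentially the same approach as the paper: density of $\mathscr{S}$ in $L^1$, the bound $\|\mathscr{F}^{-1}\psi\|_\infty \le (2\pi)^{-n}\|\psi\|_1$ to obtain a Cauchy sequence in $C_b^0$, completeness of $C_b^0$, and identification of the limit with $\mathscr{F}^{-1}h$ via continuity of $\mathscr{F}^{-1}$ on $\mathscr{S}'$. Your write-up is slightly more explicit in justifying the passage from $L^1$ and uniform convergence to $\mathscr{S}'$ convergence, but the argument is otherwise identical.
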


\begin{proof}
Since $\mathscr{S}$ is dense in $L^1$, there exists a sequence $(\varphi_j)$ in $\mathscr{S}$ such that $\varphi_j\rightarrow h$ in $L^1$ as $j\rightarrow\infty$. 
For  integers $j$ and $k$, we have the inequality
\[
\|\mathscr{F}^{-1}\varphi_j - \mathscr{F}^{-1}\varphi_k\|_{\infty}\leq(2\pi)^{-n}\|\varphi_j-\varphi_k\|_1.
\]
As $(\varphi_j)$ is a Cauchy sequence in $L^1$, the right-hand side of the above inequality tends to zero as $j,k\rightarrow\infty$. This implies that $(\mathscr{F}^{-1}\varphi_j)$ is a Cauchy sequence in $C_b^0$. Because $C_b^0$ is complete, there exists $\ell\in C_b^0$ such that $\lim_{j\rightarrow\infty}\mathscr{F}^{-1}\varphi_j = \ell$ in $C_b^0$. The continuity of the inverse Fourier transform on $\mathscr{S}'$ yields $\mathscr{F}^{-1}\varphi_j\rightarrow\mathscr{F}^{-1}h$ in $\mathscr{S}'$ and $\ell=\mathscr{F}^{-1}h$, which proves \eqref{B1}. 
\end{proof}

Let $f\in\mathscr{S}'$ such that $\hat{f}\in L^1$. This means there exists $g\in L^1$ with
\begin{equation}\label{1}
\langle \hat{f},\varphi\rangle=\langle g,\varphi\rangle,\quad \varphi\in\mathscr{S}.
\end{equation}
Equation \eqref{1} clearly leads to
\begin{equation}\label{2}
\langle \hat{f},\varphi\rangle=\langle f,\hat{\varphi}\rangle=\langle \mathscr{F}^{-1}g,\hat{\varphi}\rangle,\quad \varphi\in\mathscr{S}.
\end{equation}
Since the Fourier transform is an isomorphism on $\mathscr{S}$, \eqref{2} implies $f = \mathscr{F}^{-1}g$ in the $\mathscr{S}'$ sense. By Lemma \ref{lemB1}, $\mathscr{F}^{-1}g\in C_b^0$. So, we can identify the set $\{f\in\mathscr{S}';\;\hat{f}\in L^1\}$ with a subspace of $C_b^0$.

We define the spectral Barron space $B^0$ as
\[
B^0 := \{f\in C_b^0;\;\hat{f}\in L^1\},
\]
and endow $B^0$ with the norm
\[
\|f\|_{B^0}:=\|\hat{f}\|_1,\quad f\in B^0.
\]

Some key properties of the spectral Barron space $B^0$ are presented below.
\begin{lemma}\label{lem1}
The space $B^0$ is complete with respect to the norm $\|\cdot\|_{B^0}$.
\end{lemma}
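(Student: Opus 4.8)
The plan is to show that $B^0$ is isometrically isomorphic to a closed subspace of $L^1$, from which completeness follows immediately since $L^1$ is a Banach space. The natural candidate for this isomorphism is the Fourier transform $f \mapsto \hat{f}$, which by the very definition of the norm $\|f\|_{B^0} = \|\hat{f}\|_1$ is an isometry from $B^0$ into $L^1$. It remains to check that this map is injective (so that it is an isometric isomorphism onto its image) and that its image $\mathscr{F}(B^0) = \{\hat{f} : f \in B^0\}$ is closed in $L^1$.

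First I would verify injectivity: if $\hat{f} = 0$ in $L^1$ then $f = 0$ in $\mathscr{S}'$ because $\mathscr{F}$ is injective on $\mathscr{S}'$, and hence $f = 0$ as an element of $C_b^0$; thus $f \mapsto \hat{f}$ is a linear isometric bijection from $B^0$ onto $\mathscr{F}(B^0)$. Next I would characterize the image. The key observation, already essentially recorded in the excerpt via Lemma~\ref{lemB1} and the discussion following it, is that for \emph{any} $h \in L^1$ the tempered distribution $\mathscr{F}^{-1}h$ lies in $C_b^0$, and setting $f := \mathscr{F}^{-1}h$ we get $\hat{f} = h$ with $f \in B^0$ and $\|f\|_{B^0} = \|h\|_1$. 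Consequently $\mathscr{F}(B^0) = L^1$, i.e. the Fourier transform is actually an isometric isomorphism of $B^0$ \emph{onto all of} $L^1$ — which is trivially closed in itself.

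To spell out the completeness argument directly: let $(f_j)$ be a Cauchy sequence in $B^0$. Then $(\hat{f_j})$ is Cauchy in $L^1$, hence converges to some $h \in L^1$. Put $f := \mathscr{F}^{-1}h$; by Lemma~\ref{lemB1} (or the identification paragraph preceding the definition of $B^0$), $f \in C_b^0$ and $\hat{f} = h \in L^1$, so $f \in B^0$, and $\|f_j - f\|_{B^0} = \|\hat{f_j} - h\|_1 \to 0$. This shows $B^0$ is complete.

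I do not expect any serious obstacle here; the statement is essentially a transport of completeness from $L^1$ through the isometry $\mathscr{F}$. The only point requiring a little care — and the one place where Lemma~\ref{lemB1} is genuinely used — is confirming that the $L^1$-limit $h$ of $(\hat{f_j})$ is itself the Fourier transform of an element of $C_b^0$, rather than merely of a tempered distribution; this is exactly the content of the identification $\{f \in \mathscr{S}' : \hat{f} \in L^1\} \subset C_b^0$ established just before the definition of $B^0$. Once that identification is invoked, the proof is a one-line consequence of the completeness of $L^1$.
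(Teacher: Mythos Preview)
Your proposal is correct and follows essentially the same approach as the paper: both take a Cauchy sequence $(f_j)$ in $B^0$, pass to the Cauchy sequence $(\hat{f}_j)$ in $L^1$, take its $L^1$-limit $h$, and set $f:=\mathscr{F}^{-1}h\in B^0$ via the identification established just before the definition of $B^0$. Your additional framing of $\mathscr{F}:B^0\to L^1$ as a surjective isometry is a nice conceptual packaging of the same argument but not a genuinely different route.
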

\begin{proof}
Let $(f_j)$ be a Cauchy sequence in $B^0$. Then, by the definition of the norm in $B^0$, $(\hat{f}_j)$ is a Cauchy sequence in $L^1$. Since $L^1$ is complete, there exists $g\in L^1$ such that $\hat{f}_j\rightarrow g$ in $L^1$ as $j\rightarrow\infty$. In particular, $\hat{f}_j\rightarrow g$ in $\mathscr{S}'$. 

As the inverse Fourier transform $\mathscr{F}^{-1}$ is continuous on $\mathscr{S}'$, we have $f_j=\mathscr{F}^{-1}\hat{f}_j\rightarrow\mathscr{F}^{-1}g =: f$ in $\mathscr{S}'$. Also, since $\hat{f}\in L^1$, $f\in B^0$. Moreover, $\|f - f_j\|_{B^0}=\|\hat{f}-\hat{f}_j\|_1\rightarrow 0$ as $j\rightarrow\infty$. So, $(f_j)$ converges to $f$ in $B^0$ with respect to the norm $\|\cdot\|_{B^0}$.
\end{proof}

\begin{lemma}\label{lem1.1}
The Schwartz space $\mathscr{S}$ is dense in $B^0$.
\end{lemma}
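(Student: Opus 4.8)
The plan is to exploit the fact that, by construction, $\|f\|_{B^0} = \|\hat f\|_1$, so the Fourier transform $\mathscr{F}$ is an isometric isomorphism from $B^0$ onto a subspace of $L^1$; more precisely, $\mathscr{F}(B^0) = \mathscr{F}(C_b^0) \cap L^1$, but in fact every $h \in L^1$ is the Fourier transform of some $f \in B^0$ (namely $f = \mathscr{F}^{-1}h$, which lies in $C_b^0$ by Lemma \ref{lemB1}), so $\mathscr{F}$ maps $B^0$ \emph{onto} all of $L^1$. Consequently density of $\mathscr{S}$ in $B^0$ is equivalent, via this isometry, to density of $\mathscr{F}(\mathscr{S}) = \mathscr{S}$ in $L^1$, which is the classical fact already invoked in the proof of Lemma \ref{lemB1}.

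Concretely, I would argue as follows. Let $f \in B^0$, so $\hat f \in L^1$. Since $\mathscr{S}$ is dense in $L^1$, pick a sequence $(\psi_j)$ in $\mathscr{S}$ with $\psi_j \to \hat f$ in $L^1$. Because $\mathscr{F}$ is an isomorphism of $\mathscr{S}$ onto itself, we may write $\psi_j = \hat{\varphi}_j$ with $\varphi_j := \mathscr{F}^{-1}\psi_j \in \mathscr{S}$. Then $\varphi_j \in B^0$ and
\[
\|f - \varphi_j\|_{B^0} = \|\hat f - \hat{\varphi}_j\|_1 = \|\hat f - \psi_j\|_1 \longrightarrow 0 \quad \text{as } j \to \infty,
\]
which is exactly the assertion that $\mathscr{S}$ is dense in $B^0$. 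One should also note in passing that $\varphi_j \in \mathscr{S} \subset C_b^0$ and $\widehat{\varphi_j} = \psi_j \in \mathscr{S} \subset L^1$, so indeed $\varphi_j \in B^0$ and the manipulation is legitimate.

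There is essentially no serious obstacle here: the only thing to be careful about is the bookkeeping of the identification of $B^0$ with a subspace of $C_b^0$ established just before the statement, and the use of $\mathscr{F}\colon \mathscr{S}\to\mathscr{S}$ being a (topological) isomorphism so that approximating $\hat f$ in $L^1$ by Schwartz functions can be pulled back to approximating $f$ in $B^0$ by Schwartz functions. The density of $\mathscr{S}$ in $L^1$ is standard (e.g. $C_c^\infty$ is dense in $L^1$) and has already been used above, so it may be cited rather than reproved.
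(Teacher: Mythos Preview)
Your proof is correct and follows essentially the same approach as the paper: approximate $\hat f$ in $L^1$ by a Schwartz function $\psi$, set $\varphi=\mathscr{F}^{-1}\psi\in\mathscr{S}$, and use the isometry $\|f-\varphi\|_{B^0}=\|\hat f-\psi\|_1$. The only cosmetic difference is that you phrase it with a sequence $(\psi_j)$ while the paper uses a single $\epsilon$-approximant.
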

\begin{proof}
Let $f\in B^0$ and $\epsilon>0$. Since $\mathscr{S}$ is dense in $L^1$, there exists $\psi\in \mathscr{S}$ such that $\|\hat{f}-\psi\|_{1}\leq\epsilon$. Let $\varphi:=\mathscr{F}^{-1}\psi$. Since $\psi\in \mathscr{S}$, $\varphi\in\mathscr{S}$. Thus we have $\|f - \varphi\|_{B^0}=\|\hat{f}-\psi\|_{1}\leq\epsilon$. 
\end{proof}

\begin{remark}\label{remB0}
{\rm
The spectral Barron space $B^0$ does not contain constant functions. To illustrate this, note that the Fourier transform of the Dirac measure $\delta$ is $\hat{\delta} = 1$.  Alternatively, by the Riemann-Lebesgue lemma, for any $h\in L^1$, the inverse Fourier transform $\mathscr{F}^{-1}h(x)$ converges to $0$ as $|x|\rightarrow\infty$, showing again that the spectral Barron space $B^0$ does not contain constant functions.
}
\end{remark}

\subsection{Spectral Barron spaces $B^s$ with $s\geq 0$ and the dual spaces}

We further define the generic spectral Barron space $B^s$ for $s\geq0$ as
\begin{align}\label{eq_SBarronSpace}
B^s := \{f\in C_b^0;\;\langle\xi\rangle^s\hat{f}\in L^1\},
\end{align}
where $\langle\xi\rangle:=\sqrt{1 + |\xi|^2}$. We equip $B^s$ with the natural norm
\begin{align}\label{eq_SBarronSpacenorm}
\|f\|_{B^s}:=\|\langle\xi\rangle^s\hat{f}\|_1,\quad f\in B^s.
\end{align}
Just as in the case of $B^0$, we can verify that $B^s$ is a Banach space when endowed with the norm $\|\cdot\|_{B^s}$. Let $0 \leq s \leq t$. It is straightforward to observe that the space $B^t$ is continuously embedded in $B^s$, denoted as $B^t \hookrightarrow B^s$, and for any function $f \in B^t$, the following inequality holds:
\[
\|f\|_{B^s} \leq \|f\|_{B^t}.
\]

\begin{remark}\label{remBs}
{\rm
For $s \geq 0$, define the measure $d\mu_s(\xi) := \langle \xi \rangle^s d\xi$. Under this definition, the norm in the space $B^s$ satisfies  
\[
\|f\|_{B^s} = \|\hat{f}\|_{L^1(\mathbb{R}^n, d\mu_s)}, \quad  f \in B^s.
\]  
Furthermore, for any measurable set $E \subseteq \mathbb{R}^n$, the equivalence $d\mu_s(E) = 0$ holds if and only if the Lebesgue measure $d\xi(E) = 0$. This establishes that $d\mu_s$ and the Lebesgue measure are mutually absolutely continuous.
}
\end{remark}

We define the pseudo-differential operator with the symbol $\langle \xi \rangle^{2s}$, denoted by $(1 - \Delta)^s$, as follows:
\[
(1 - \Delta)^s f(x) := (2\pi)^{-n} \int_{\mathbb{R}^n} e^{i x \cdot \xi} \langle \xi \rangle^{2s} \hat{f}(\xi) \, d\xi, \quad f \in B^{2s}.
\]
This definition can be equivalently expressed as
\begin{equation}\label{inv}
(1 - \Delta)^s f = \mathscr{F}^{-1}(\langle \xi \rangle^{2s} \hat{f}), \quad f \in B^{2s}.
\end{equation}
Consequently, we have
\[
\|(1 - \Delta)^s f\|_{B^0} = \|\langle \xi \rangle^{2s} \hat{f}\|_1 = \|f\|_{B^{2s}}, \quad f \in B^{2s}.
\]
This shows that $(1 - \Delta)^s$ is an isometric operator from $B^{2s}$ to $B^0$:
\begin{equation}\label{5}
\|(1 - \Delta)^s f\|_{B^0} = \|f\|_{B^{2s}}, \quad f \in B^{2s}.
\end{equation}
Now, let $g \in B^0$ and define $f = \mathscr{F}^{-1}(\langle \xi \rangle^{-2s} \hat{g})$. Then $f \in B^{2s}$, and by \eqref{inv}, we have $g = (1 - \Delta)^s f$. Moreover, $\|f\|_{B^{2s}} = \|g\|_{B^0}$. We conclude that $(1 - \Delta)^s$ defines an isometric isomorphism from $B^{2s}$ onto $B^0$. If $(1 - \Delta)^{-s}$ denotes the inverse of $(1 - \Delta)^s$, then we have
\[
\|(1 - \Delta)^{-s} g\|_{B^{2s}} = \|g\|_{B^0}, \quad g \in B^0.
\]
In particular, this implies that $B^s$ can be represented as the range of $(1 - \Delta)^{-s/2}$, denoted by
\[
B^s = R((1 - \Delta)^{-s/2}) := (1 - \Delta)^{-s/2}(B^0).
\]

For $s \geq 0$, we define the operator $K_s$ as follows:
\[
K_s f(x) := (2\pi)^{-n} \int_{\mathbb{R}^n} e^{i x \cdot \xi} \langle \xi \rangle^{-2s} \hat{f}(\xi) \, d\xi, \quad f \in B^0.
\]
It is clear that $K_s$ belongs to the space of bounded linear operators $\mathscr{B}(B^0, B^{2s})$, and its operator norm satisfies $\|K_s\|_{\mathscr{B}(B^0, B^{2s})} \leq (2\pi)^{-n}$. Moreover, we can establish the following:
\begin{equation}\label{5.1}
(1 - \Delta)^{-s} f(x) = (2\pi)^{-n} \int_{\mathbb{R}^n} e^{i x \cdot \xi} \langle \xi \rangle^{-2s} \hat{f}(\xi) \, d\xi, \quad f \in B^0.
\end{equation}
In other words,  we have verified that $K_s$ is indeed the inverse of the operator $(1 - \Delta)^s$.

\begin{remark}\label{rem1.0}
{\rm
Let $s \geq 0$, $t \geq 0$, and let $a: \mathbb{R}^n \rightarrow \mathbb{C}$ be a measurable function such that $\langle \xi \rangle^{s-t} a \in L^\infty$. Consider the pseudo-differential operator defined by
\[
P f(x) = (2\pi)^{-n} \int_{\mathbb{R}^n} e^{i x \cdot \xi} a(\xi) \hat{f}(\xi) \, d\xi.
\]
We can verify that $P$ belongs to the space of bounded linear operators $\mathscr{B}(B^t, B^s)$, and its operator norm satisfies $\|P\|_{\mathscr{B}(B^t, B^s)} \leq \|\langle \xi \rangle^{s-t} a\|_\infty$.
In particular, if $a(\xi) = \sum_{|\alpha| \leq k} a_\alpha \xi^\alpha$, where $a_\alpha \in \mathbb{C}$ and $k\in\mathbb{N}$, then $P$ reduces to the constant coefficients differential operator
\[
P = \sum_{|\alpha| \leq k} (-i)^{|\alpha|} a_\alpha \partial^\alpha.
\]
Therefore, $P \in \mathscr{B}(B^t, B^s)$ provided that $s + k \le t $.
}
\end{remark}

Other notable properties of the spaces $B^s$ are summarized in the following proposition where the last item (iv) can also be found in \cite[Lemma 3.4]{CLLZ}.

\begin{proposition}\label{pro2}
$\mathrm{(i)}$ Let $0 \leq r \leq t$, $s \in [r, t]$, and $\alpha \in [0, 1]$ such that $s = \alpha r + (1 - \alpha) t$. Then, the following inequality holds:
\begin{equation}\label{ii}
\|f\|_{B^s} \leq \|f\|_{B^r}^\alpha \|f\|_{B^t}^{1 - \alpha}, \quad f \in B^t.
\end{equation}
$\mathrm{(ii)}$ The Schwartz space $\mathscr{S}$ is dense in $B^s$ for all $s \geq 0$.
\\
$\mathrm{(iii)}$ Let $s \geq 0$ and $t > s + \frac{n}{2}$. Then, $\langle \xi \rangle^{s - t} \in L^2$, and the Sobolev space $H^t$ is continuously embedded in $B^s$, denoted by $H^t \hookrightarrow B^s$, with the following estimate:
\[
\|f\|_{B^s} \leq \|\langle \xi \rangle^{s - t}\|_2 \|f\|_{H^t}, \quad f \in H^t.
\]
$\mathrm{(iv)}$ Let $s \geq 0$. If $f, g \in B^s$, then the product $fg$ also belongs to $B^s$, and the following inequality holds:
\begin{equation}\label{prod}
\|fg\|_{B^s} \leq 2^{s/2} (2\pi)^{-n} \|g\|_{B^s} \|f\|_{B^s}.
\end{equation}
\end{proposition}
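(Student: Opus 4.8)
The plan is to treat the four items separately; each reduces to a classical inequality applied to the weighted integral $\int_{\mathbb{R}^n}\langle\xi\rangle^s|\hat f(\xi)|\,d\xi$ that defines $\|f\|_{B^s}$. For (i), I would use the identity $\langle\xi\rangle^s=\langle\xi\rangle^{\alpha r}\langle\xi\rangle^{(1-\alpha)t}$, which is exact because $s=\alpha r+(1-\alpha)t$, rewrite the integrand as $\big(\langle\xi\rangle^r|\hat f(\xi)|\big)^\alpha\big(\langle\xi\rangle^t|\hat f(\xi)|\big)^{1-\alpha}$, and apply H\"older's inequality with conjugate exponents $1/\alpha$ and $1/(1-\alpha)$; the degenerate cases $\alpha\in\{0,1\}$ are trivial. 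Since $B^t\hookrightarrow B^r$, both factors on the right are finite, which simultaneously shows $f\in B^s$ and yields \eqref{ii}.

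For (ii) I would mimic the proof of Lemma \ref{lem1.1}: given $f\in B^s$ and $\varepsilon>0$, choose $\psi\in\mathscr{S}$ with $\|\langle\xi\rangle^s\hat f-\psi\|_1\le\varepsilon$ by density of $\mathscr{S}$ in $L^1$, and set $\varphi:=\mathscr{F}^{-1}\big(\langle\xi\rangle^{-s}\psi\big)$. Since $\langle\xi\rangle^{-s}$ is smooth with bounded derivatives of every order, multiplication by it preserves $\mathscr{S}$, so $\varphi\in\mathscr{S}$ and $\|f-\varphi\|_{B^s}=\|\langle\xi\rangle^s\hat f-\psi\|_1\le\varepsilon$. (Equivalently, one may combine Lemma \ref{lem1.1} with the fact that the isometric isomorphism $(1-\Delta)^{s/2}$ from $B^s$ onto $B^0$ maps $\mathscr{S}$ onto $\mathscr{S}$.)

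For (iii), the membership $\langle\xi\rangle^{s-t}\in L^2$ is the elementary computation $\int_{\mathbb{R}^n}\langle\xi\rangle^{2(s-t)}\,d\xi<\infty$, which holds precisely when $2(t-s)>n$. Then for $f\in H^t$ I would factor $\langle\xi\rangle^s|\hat f(\xi)|=\langle\xi\rangle^{s-t}\cdot\langle\xi\rangle^t|\hat f(\xi)|$ and apply the Cauchy--Schwarz inequality, obtaining $\|f\|_{B^s}\le\|\langle\xi\rangle^{s-t}\|_2\,\|\langle\xi\rangle^t\hat f\|_2=\|\langle\xi\rangle^{s-t}\|_2\,\|f\|_{H^t}$; the special case $s=0$ (using $t>n/2$) shows in particular that $\hat f\in L^1$, so $f$ is identified with an element of $C_b^0$ and therefore lies in $B^s$.

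For (iv) the two ingredients are the convolution identity $\widehat{fg}=(2\pi)^{-n}\hat f*\hat g$, which is legitimate for $f,g\in B^0$ because $\hat f,\hat g\in L^1$ forces $\hat f*\hat g\in L^1$ and, by Fubini, $\mathscr{F}^{-1}(\hat f*\hat g)=(2\pi)^n fg$; and the Peetre-type inequality $\langle\xi\rangle^s\le 2^{s/2}\langle\xi-\eta\rangle^s\langle\eta\rangle^s$ for $s\ge0$, a consequence of $1+|\xi|^2\le 2(1+|\xi-\eta|^2)(1+|\eta|^2)$. Combining them gives $\langle\xi\rangle^s|\widehat{fg}(\xi)|\le 2^{s/2}(2\pi)^{-n}\big[(\langle\cdot\rangle^s|\hat f|)*(\langle\cdot\rangle^s|\hat g|)\big](\xi)$; integrating in $\xi$ and using $\|u*v\|_1=\|u\|_1\|v\|_1$ for nonnegative $u,v$ produces \eqref{prod}, while finiteness of the right-hand side gives $\widehat{fg}\in L^1$, and $fg\in C_b^0$ is clear. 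Among the four, (i)--(iii) are routine; the step deserving the most care is (iv), namely keeping the constant $(2\pi)^{-n}$ consistent through the convolution theorem under the Fourier normalization used here and invoking the Peetre inequality with the sharp constant $2^{s/2}$.
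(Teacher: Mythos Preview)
Your proposal is correct and follows essentially the same route as the paper: H\"older for (i), density of $\mathscr{S}$ in $L^1$ transported through the isometry $\langle\xi\rangle^s$ for (ii), Cauchy--Schwarz for (iii), and the convolution formula together with Peetre's inequality for (iv). The extra care you give to justifying $\widehat{fg}=(2\pi)^{-n}\hat f*\hat g$ and to the membership $f\in C_b^0$ in (iii) is welcome but not a departure from the paper's argument.
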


\begin{proof}
(i) Let $f \in B^t$. We start by noting that
\[
\int_{\mathbb{R}^n} \langle \xi \rangle^s |\hat{f}(\xi)| \, d\xi = \int_{\mathbb{R}^n} \left[ \langle \xi \rangle^{\alpha r} |\hat{f}(\xi)|^\alpha \right] \left[ \langle \xi \rangle^{(1 - \alpha) t} |\hat{f}(\xi)|^{1 - \alpha} \right] \, d\xi.
\]
Applying H\"older's inequality, we obtain the desired result \eqref{ii}.

(ii) Let $s \geq 0$ and $f \in B^s$. For all $\epsilon > 0$, using the density of the Schwartz space $\mathscr{S}$ in $L^1$, we can find $\psi \in \mathscr{S}$ such that
\[
\|\langle \xi \rangle^s \hat{f} - \psi\|_1 \leq \epsilon.
\]
Then, define $\varphi := \mathscr{F}^{-1}(\langle \xi \rangle^{-s} \psi) \in \mathscr{S}$. We have
\[
\|f - \varphi\|_{B^s} = \|\langle \xi \rangle^s \hat{f} - \psi\|_1 \leq \epsilon.
\]
This shows that $\mathscr{S}$ is dense in $B^s$.

(iii) The fact that $\langle \xi \rangle^{s - t} \in L^2$ is straightforward. Let $f \in H^t$. Using the Cauchy-Schwarz inequality, we get
\[
\|\langle \xi \rangle^s \hat{f}\|_1 = \|\langle \xi \rangle^{s - t} \langle \xi \rangle^t \hat{f}\|_1 \leq \|\langle \xi \rangle^{s - t}\|_2 \|\langle \xi \rangle^t \hat{f}\|_2 = \|\langle \xi \rangle^{s - t}\|_2 \|f\|_{H^t}.
\]
The expected embedding $H^t \hookrightarrow B^s$ then follows.

(iv) For $f, g \in B^s$, we have
\[
|\widehat{fg}(\xi)| \leq (2\pi)^{-n} \int_{\mathbb{R}^n} |\hat{f}(\xi - \eta)| |\hat{g}(\eta)| \, d\eta.
\]
This inequality, combined with Peetre's inequality:
\[
\langle \xi \rangle^s \leq 2^{s/2} \langle \xi - \eta \rangle^s \langle \eta \rangle^s, \quad \xi, \eta \in \mathbb{R}^n,
\]
implies
\[
\begin{aligned}
\int_{\mathbb{R}^n} \langle \xi \rangle^s |\widehat{fg}(\xi)| \, d\xi
&\leq 2^{s/2} (2\pi)^{-n} \int_{\mathbb{R}^n} \int_{\mathbb{R}^n} \langle \xi - \eta \rangle^s |\hat{f}(\xi - \eta)| \langle \eta \rangle^s |\hat{g}(\eta)| \, d\eta \, d\xi \\
&\leq 2^{s/2} (2\pi)^{-n} \int_{\mathbb{R}^n} \langle \xi \rangle^s |\hat{f}(\xi)| \, d\xi \int_{\mathbb{R}^n} \langle \eta \rangle^s |\hat{g}(\eta)| \, d\eta \\
&\leq 2^{s/2} (2\pi)^{-n} \|f\|_{B^s} \|g\|_{B^s}.
\end{aligned}
\]
Thus, the inequality \eqref{prod} follows.
\end{proof}

For any compact set $K \subset \mathbb{R}^n$ and $s \geq 0$, we define $B_K^s:=B^s\cap C_K^0$, where
\[
C_K^0 := \{f \in C^0; \; \mathrm{supp}\, f \subset K\}.
\]
Here, $C^0$ denotes the space of continuous functions on $\mathbb{R}^n$. As a closed subspace of $B^s$, $B_K^s$ is a Banach space when equipped with the norm $\|\cdot\|_{B^s}$.

We now present the following compact embedding theorem, whose proof is inspired by that of \cite[Theorem 10.1.10]{Ho}.
\begin{theorem}\label{thmcem}
For all $0 \leq s < t$, the embedding $B_K^t \hookrightarrow B^s$ is compact.
\end{theorem}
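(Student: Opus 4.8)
The plan is to verify compactness in its sequential form: given any sequence $(f_j)$ in $B_K^t$ with $\|f_j\|_{B^t}\le M$ for all $j$, I would produce a subsequence converging in $B^s$. First, record the elementary consequences of the bound. Since $\langle\xi\rangle^t\ge 1$ we have $\|\hat f_j\|_1\le M$, and hence, by Lemma \ref{lemB1}, $\|f_j\|_\infty\le(2\pi)^{-n}M$. Each $f_j$ is continuous with support in the compact set $K$, so $f_j\in L^1$ with $\|f_j\|_1\le|K|(2\pi)^{-n}M$, and
\[
\hat f_j(\xi)=\int_K e^{-ix\cdot\xi}f_j(x)\,dx,\qquad \xi\in\mathbb{R}^n .
\]

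The key step is to convert this spatial-support constraint into equicontinuity of the sequence $(\hat f_j)$. From the integral representation and the elementary bound $|e^{-ix\cdot\xi}-e^{-ix\cdot\xi'}|\le|x|\,|\xi-\xi'|$ one obtains
\[
|\hat f_j(\xi)-\hat f_j(\xi')|\le\Bigl(\sup_{x\in K}|x|\Bigr)\|f_j\|_1\,|\xi-\xi'|\le C_K M\,|\xi-\xi'|,\qquad \xi,\xi'\in\mathbb{R}^n,
\]
uniformly in $j$, while $|\hat f_j(\xi)|\le|K|(2\pi)^{-n}M$ uniformly. By the Arzel\`a--Ascoli theorem applied on each closed ball of $\mathbb{R}^n$, together with a diagonal extraction, we get a subsequence (not relabelled) and a continuous function $g$ with $\hat f_j\to g$ uniformly on every compact subset of $\mathbb{R}^n$. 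Fatou's lemma then gives $\int_{\mathbb{R}^n}\langle\xi\rangle^t|g(\xi)|\,d\xi\le\liminf_j\|f_j\|_{B^t}\le M$, so $g\in L^1$ and $f:=\mathscr{F}^{-1}g\in C_b^0$ satisfies $\hat f=g$ and $\|f\|_{B^t}\le M$.

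Finally I would show $\|f_j-f\|_{B^s}\to0$ by a frequency cutoff. Fix $\epsilon>0$. For $R>0$,
\[
\|f_j-f\|_{B^s}=\int_{|\xi|\le R}\langle\xi\rangle^s|\hat f_j-g|\,d\xi+\int_{|\xi|>R}\langle\xi\rangle^s|\hat f_j-g|\,d\xi .
\]
Since $s-t<0$, on $\{|\xi|>R\}$ we have $\langle\xi\rangle^s\le\langle R\rangle^{s-t}\langle\xi\rangle^t$, so the second integral is at most $\langle R\rangle^{s-t}\bigl(\|f_j\|_{B^t}+\|f\|_{B^t}\bigr)\le2M\langle R\rangle^{s-t}$, which is $<\epsilon/2$ once $R$ is chosen large (this is where $s<t$ is used). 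For that fixed $R$, the first integral is $\le\langle R\rangle^s\int_{|\xi|\le R}|\hat f_j-g|\,d\xi\to0$ by uniform convergence of $\hat f_j$ on $\{|\xi|\le R\}$, hence $<\epsilon/2$ for $j$ large. Therefore $f_j\to f$ in $B^s$; and since $B^s\hookrightarrow C_b^0$, the convergence is uniform, so $\mathrm{supp}\, f\subset K$ and in fact $f\in B_K^s$.

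The main obstacle is the equicontinuity step: it is the only place the compact support enters, and it is essential — without it the inclusion $B^t\hookrightarrow B^s$ is bounded but not compact (translates of a fixed compactly supported bump form a bounded set in $B^t$ with no $B^s$-convergent subsequence). The remaining high-frequency tail estimate is routine and relies only on $s<t$.
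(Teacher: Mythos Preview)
Your proof is correct and follows the same strategy as the paper: obtain equicontinuity of $(\hat f_j)$ from the compact spatial support, apply Arzel\`a--Ascoli on balls, then split into low and high frequencies using $s<t$. The only differences are cosmetic: the paper derives equicontinuity via the convolution identity $\hat f_j=(2\pi)^{-n}\hat f_j\ast\hat\phi$ (for a cutoff $\phi\in C_0^\infty$ equal to $1$ near $K$) rather than your direct Lipschitz bound from the integral formula, and it shows the subsequence is Cauchy in $B^s$ instead of first identifying the limit via Fatou.
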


\begin{proof}
Let $(f_j)$ be a sequence in $B_K^t$ such that $\sup_j \|f_j\|_{B^t} \leq 1$. Fix $\phi \in C_0^\infty$ satisfying $\phi = 1$ in a neighborhood of $K$. In this case, since
\[
\hat{f}_j = \widehat{\phi f}_j = (2\pi)^{-n} \hat{f}_j \ast \hat{\phi}, \quad j \geq 1,
\]
we have, for all $\alpha \in (\mathbb{N}\cup\{0\})^n$ and $j \geq 1$
\[
\partial^\alpha \hat{f}_j = (2\pi)^{-n} \hat{f}_j \ast \partial^\alpha \hat{\phi}.
\]
Consequently,
\[
|\partial^\alpha \hat{f}_j(\xi)| \leq (2\pi)^{-n} \|f_j\|_{B^0} \|\partial^\alpha \hat{\phi}\|_\infty \leq (2\pi)^{-n} \|\partial^\alpha \hat{\phi}\|_\infty, \quad \xi \in \mathbb{R}^n, \; j \geq 1.
\]
In particular, the sequence $(\hat{f}_j)$ is uniformly bounded and equicontinuous on all compact subsets of $\mathbb{R}^n$.

For all $\epsilon > 0$, we can find $\rho > 0$ such that $\langle \xi \rangle^{-(t - s)} \leq \epsilon$ if $\langle \xi \rangle > \rho$. Without loss of generality, by passing to a subsequence if necessary, we may assume that $(\hat{f}_j)$ converges in $C^0(\{\langle \xi \rangle \leq \rho\})$. Using the following estimate
\[
\begin{aligned}
\|f_j - f_k\|_{B^s} &= \int_{\langle \xi \rangle \leq \rho} \langle \xi \rangle^s |\hat{f}_j(\xi) - \hat{f}_k(\xi)| \, d\xi + \int_{\langle \xi \rangle > \rho} \langle \xi \rangle^s |\hat{f}_j(\xi) - \hat{f}_k(\xi)| \, d\xi \\
&\leq |\{\langle \xi \rangle \leq \rho\}| \|\hat{f}_j - \hat{f}_k\|_{C^0(\{\langle \xi \rangle \leq \rho\})} + \epsilon \|f_j - f_k\|_{B^t} \\
&\leq |\{\langle \xi \rangle \leq \rho\}| \|\hat{f}_j - \hat{f}_k\|_{C^0(\{\langle \xi \rangle \leq \rho\})} + 2\epsilon, \quad j \geq 1, \; k \geq 1,
\end{aligned}
\]
we find that $(f_j)$ is a Cauchy sequence in $B^s$. Since $B^s$ is complete, $(f_j)$ converges in $B^s$. This completes the proof.
\end{proof}

We now demonstrate that the convolution with a function from $B_K^0$ defines a bounded operator on $B^s$ for all $s \geq 0$. This follows from the embedding $B_K^0 \hookrightarrow L^1$, given by
\[
\|f\|_1 \leq |K| \|f\|_\infty \leq (2\pi)^{-n} |K| \|\hat{f}\|_1 = (2\pi)^{-n} |K| \|f\|_{B^0}, \quad f \in B_K^0,
\]
and the following lemma.
\begin{lemma}\label{lemconv}
Let $g \in L^1$ and $f \in B^s$, where $s \geq 0$. Then $f \ast g \in B^s$ and
\[
\|f \ast g\|_{B^s} \leq \|f\|_{B^s} \|g\|_1.
\]
\end{lemma}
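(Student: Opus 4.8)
The plan is to reduce everything to the convolution identity $\widehat{f \ast g} = \hat f\, \hat g$ together with the elementary bound $\|\hat g\|_\infty \le \|g\|_1$. First I would check that $f \ast g$ is a legitimate element of $C_b^0$: since $f \in B^s \subset C_b^0 \subset L^\infty$ and $g \in L^1$, the integral $(f\ast g)(x) = \int_{\mathbb{R}^n} f(x-y)g(y)\,dy$ converges absolutely for every $x$, with $\|f\ast g\|_\infty \le \|f\|_\infty \|g\|_1$, and continuity of $f\ast g$ follows from $|(f\ast g)(x) - (f\ast g)(x')| \le \|f\|_\infty \|\tau_x g - \tau_{x'}g\|_1$ together with the continuity of translations in $L^1$. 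Hence $f\ast g \in C_b^0 \subset \mathscr{S}'$ and its Fourier transform is well defined.

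The core step is to establish $\widehat{f\ast g} = \hat f\,\hat g$ as an identity in $\mathscr{S}'$. Testing against $\varphi \in \mathscr{S}$ and using the definition of the Fourier transform on $\mathscr{S}'$, one has $\langle \widehat{f\ast g}, \varphi\rangle = \langle f\ast g, \hat\varphi\rangle = \int_{\mathbb{R}^n}(f\ast g)(x)\hat\varphi(x)\,dx$. Since $\int\!\!\int |f(x-y)||g(y)||\hat\varphi(x)|\,dy\,dx \le \|f\|_\infty\|g\|_1\|\hat\varphi\|_1 < \infty$, Fubini's theorem gives $\int_{\mathbb{R}^n} g(y)\langle \tau_y f, \hat\varphi\rangle\,dy$, where $\tau_y f(x):=f(x-y)$. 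Using the translation rule $\widehat{\tau_y f} = e^{-iy\cdot\xi}\hat f$ (valid on $\mathscr{S}'$) we get $\langle\tau_y f,\hat\varphi\rangle = \langle\widehat{\tau_y f},\varphi\rangle = \int_{\mathbb{R}^n} e^{-iy\cdot\xi}\hat f(\xi)\varphi(\xi)\,d\xi$, and a second application of Fubini (justified because $\hat f \in L^1$ and $g\in L^1$) yields $\langle\widehat{f\ast g},\varphi\rangle = \int_{\mathbb{R}^n}\hat f(\xi)\hat g(\xi)\varphi(\xi)\,d\xi$, that is $\widehat{f\ast g} = \hat f\,\hat g$ in $\mathscr{S}'$.

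Finally, since $\hat f \in L^1$ and $\hat g \in L^\infty$ (in fact $\hat g\in C_b^0$ by the Riemann--Lebesgue lemma) with $\|\hat g\|_\infty \le \|g\|_1$, the function $\langle\xi\rangle^s \widehat{f\ast g} = \langle\xi\rangle^s \hat f\,\hat g$ belongs to $L^1$; hence $f\ast g \in B^s$ and
\[
\|f\ast g\|_{B^s} = \|\langle\xi\rangle^s\hat f\,\hat g\|_1 \le \|\hat g\|_\infty \,\|\langle\xi\rangle^s\hat f\|_1 \le \|g\|_1\,\|f\|_{B^s},
\]
which is the claimed bound.

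I expect the only genuine obstacle to be the bookkeeping in the second step: the two Fubini interchanges and the correct use of the distributional translation and Fourier-transform rules. If one prefers to avoid manipulating the distributional Fourier transform directly, there is an alternative by density: approximate $f$ by $\varphi_j \in \mathscr{S}$ in $\|\cdot\|_{B^s}$ (Proposition \ref{pro2}(ii)) and $g$ by $\psi_k \in \mathscr{S}$ in $\|\cdot\|_1$, invoke the classical identity $\widehat{\varphi_j\ast\psi_k} = \hat\varphi_j\hat\psi_k$ and bilinearity of convolution to see that $(\varphi_j\ast\psi_k)$ is Cauchy in $B^s$ (using $\|h\ast\psi\|_{B^s}\le\|\psi\|_1\|h\|_{B^s}$ for $h,\psi\in\mathscr{S}$), and pass to the limit, noting that $\varphi_j\ast\psi_k \to f\ast g$ uniformly because $\|\cdot\|_\infty \le (2\pi)^{-n}\|\cdot\|_{B^0} \le (2\pi)^{-n}\|\cdot\|_{B^s}$.
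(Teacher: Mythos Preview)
Your proposal is correct and follows essentially the same approach as the paper: the key steps are the convolution identity $\widehat{f\ast g}=\hat f\,\hat g$ and the bound $\|\hat g\|_\infty\le\|g\|_1$, from which the pointwise inequality $\langle\xi\rangle^s|\widehat{f\ast g}|\le\|g\|_1\langle\xi\rangle^s|\hat f|$ gives the result. The paper's own proof is just these two lines; you have supplied the justification (that $f\ast g\in C_b^0$ and the distributional convolution identity holds) which the paper leaves implicit.
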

\begin{proof}
Since $\|\hat{g}\|_\infty \leq \|g\|_1$, we have
\[
\langle \xi \rangle^s |\widehat{f \ast g}| = \langle \xi \rangle^s |\hat{f}| |\hat{g}| \leq \|g\|_1 \langle \xi \rangle^s |\hat{f}|,\quad \xi\in\mathbb{R}^n
\]
as expected.
\end{proof}

The remaining portion of this subsection will be dedicated to the rigorous definition of the dual space of the spectral Barron space $B^s$ for $s \geq 0$. 
We define, for $s\geq 0$, the space $\tilde{B}^{-s}$ as follows:
\[
\tilde{B}^{-s}:=\left\{ f\in \mathscr{S}';\; \langle \xi\rangle^{-s}\mathscr{F}^{-1}f\in L^\infty\right\}.
\]
We proceed to verify that $\tilde{B}^{-s}$ is a Banach space when equipped with the norm
\[
\|f\|_{\tilde{B}^{-s}}:=\left\|\langle \xi\rangle^{-s}\mathscr{F}^{-1}f\right\|_{\infty}.
\]
For all $\varphi \in \mathscr{S}$, $f\in \mathscr{S}'$, and $s\ge 0$, the following pairing identity holds:
\begin{equation}\label{df}
\langle f,\varphi\rangle = \left\langle \langle \xi\rangle^{-s}\mathscr{F}^{-1}f, \langle \xi\rangle^s\hat{\varphi}\right\rangle.
\end{equation}
In particular, when $\varphi \in \mathscr{S}$ and $f\in \tilde{B}^{-s}$, we can express the pairing as an integral
\[
\langle f,\varphi\rangle = \left\langle \langle \xi\rangle^{-s}\mathscr{F}^{-1}f, \langle \xi\rangle^s\hat{\varphi}\right\rangle:=\int_{\mathbb{R}^n}\left[\langle \xi\rangle^{-s}\mathscr{F}^{-1}f\right]\left[\langle \xi\rangle^s\hat{\varphi}\right]d\xi.
\]
Given the last identity, and considering that the Schwartz space $\mathscr{S}$ is dense in the spectral Barron space $B^s$, we can claim that every $f\in\tilde{B}^{-s}$ has a unique bounded linear extension to $(B^s)'$, which denotes the dual space of $B^s$.  We still denote this extension by $f$, and it satisfies the inequality
\[
\|f\|_{(B^s)'}\leq \|f\|_{\tilde{B}^{-s}}.
\]
In other words, we have the continuous embedding $\tilde{B}^{-s}\hookrightarrow (B^s)'$.
In fact, the reverse inclusion also holds, as demonstrated by the following result.

\begin{proposition}\label{pro3}
For all $s\geq 0$, the dual space $(B^s)'$ of the spectral Barron space $B^s$ can be identified both algebraically and topologically with the space $\tilde{B}^{-s}$.
\end{proposition}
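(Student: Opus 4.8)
The plan is to show that the continuous embedding $\tilde{B}^{-s}\hookrightarrow (B^s)'$ already established just above is in fact surjective and isometric, so that it becomes an identification. The natural route is to transport the problem to $B^0$ via the isometric isomorphism $(1-\Delta)^{s/2}\colon B^s\to B^0$ constructed earlier in the excerpt, and then use the fact that $B^0$ is (by definition of its norm) an isometric copy of a closed subspace of $L^1$ realized through the Fourier transform, together with the classical duality $(L^1)'=L^\infty$.

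First I would record that, since $f\mapsto\hat f$ is a linear isometry from $B^0$ onto the closed subspace $\mathscr{F}(B^0)\subseteq L^1$, every continuous functional on $B^0$ is, via this isometry and the Hahn–Banach theorem, represented by some $h\in L^\infty$ acting by $g\mapsto\int_{\mathbb{R}^n}\hat g(\xi)\,h(\xi)\,d\xi$, with the functional norm equal to the $L^\infty$ norm of the representative of minimal norm; moreover, because $\mathscr{S}$ is dense in $B^0$ (Lemma \ref{lem1.1}) and $\hat{\mathscr{S}}=\mathscr{S}$ is dense in $L^1$, the representative $h$ is uniquely determined in $L^\infty$ — so in fact $(B^0)'\cong L^\infty$ isometrically. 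Setting $f:=\mathscr{F}(h)\in\mathscr{S}'$ (so $\mathscr{F}^{-1}f=h$ up to the harmless convention, i.e. $h=\mathscr{F}^{-1}f$ after reconciling the transform normalizations), this says precisely that $f\in\tilde{B}^0$ with $\|f\|_{\tilde{B}^0}=\|h\|_\infty$, and that the given functional is the canonical extension of $f$. This handles the case $s=0$ and gives the reverse inequality $\|f\|_{\tilde{B}^{-0}}\leq\|f\|_{(B^0)'}$, hence equality.

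For general $s\geq 0$, given $L\in(B^s)'$, I would consider $L\circ(1-\Delta)^{-s/2}\in(B^0)'$ — legitimate since $(1-\Delta)^{-s/2}$ maps $B^0$ isometrically onto $B^s$ by \eqref{5} and the discussion following it. By the $s=0$ case there is $g\in\tilde{B}^0=L^\infty$ (after the transform identification) with $L\circ(1-\Delta)^{-s/2}=g$ as functionals and $\|g\|_{\tilde{B}^0}=\|L\|_{(B^s)'}$. I would then set $f:=\mathscr{F}^{-1}\big(\langle\xi\rangle^{s}\,\mathscr{F}^{-1}g\big)$ — more precisely, I define $f\in\mathscr{S}'$ by $\langle\xi\rangle^{-s}\mathscr{F}^{-1}f=\mathscr{F}^{-1}g\in L^\infty$, which says $f\in\tilde{B}^{-s}$ with $\|f\|_{\tilde{B}^{-s}}=\|g\|_\infty=\|L\|_{(B^s)'}$. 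The remaining check is that this $f$ induces $L$: for $\varphi\in\mathscr{S}$, using \eqref{df} together with the fact that $\mathscr{F}^{-1}(\langle\xi\rangle^{-s}\hat\varphi)$ ranges over a dense subset of $B^0$ as $\varphi$ ranges over $\mathscr{S}$, one computes $\langle f,\varphi\rangle=\langle g,\mathscr{F}^{-1}(\langle\xi\rangle^{-s}\hat\varphi)\rangle=\big(L\circ(1-\Delta)^{-s/2}\big)\!\left((1-\Delta)^{s/2}\varphi\right)=\langle L,\varphi\rangle$, and density of $\mathscr{S}$ in $B^s$ (Proposition \ref{pro2}(ii)) promotes this to all of $B^s$. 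Combined with the already-known inequality $\|f\|_{(B^s)'}\le\|f\|_{\tilde{B}^{-s}}$, this yields $\|f\|_{(B^s)'}=\|f\|_{\tilde{B}^{-s}}$, i.e. the identification is isometric, completing the proof.

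I expect the main obstacle to be purely bookkeeping rather than conceptual: carefully reconciling the two normalization conventions ($\mathscr{F}$ vs. $\mathscr{F}^{-1}$, and the $(2\pi)^{-n}$ factors) so that "$(B^0)'$ is represented by $L^\infty$ via $\mathscr{F}^{-1}$" comes out exactly matching the definition of $\|\cdot\|_{\tilde{B}^{-s}}$, and making sure the uniqueness of the $L^\infty$ representative (needed for the isometry, not just the isomorphism) is correctly extracted from the density of $\mathscr{S}$ in $L^1$. One should also be slightly careful that $\langle\xi\rangle^{-s}\mathscr{F}^{-1}f$ genuinely lies in $L^\infty$ and not merely in some larger space — but that is immediate from $\mathscr{F}^{-1}g\in L^\infty$ and $\langle\xi\rangle^{-s}\le 1$.
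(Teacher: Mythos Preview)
Your approach is correct and reaches the same conclusion as the paper, but the paper's argument is more direct: it does not reduce to $s=0$ or invoke Hahn--Banach. Given $f\in(B^s)'\subset\mathscr{S}'$, the paper simply combines the pairing identity \eqref{df} with the fact that $\varphi\mapsto\langle\xi\rangle^s\hat\varphi$ is an automorphism of $\mathscr{S}$ to obtain
\[
\bigl|\bigl\langle\langle\xi\rangle^{-s}\mathscr{F}^{-1}f,\psi\bigr\rangle\bigr|\le\|f\|_{(B^s)'}\|\psi\|_1,\quad\psi\in\mathscr{S},
\]
and then density of $\mathscr{S}$ in $L^1$ forces $\langle\xi\rangle^{-s}\mathscr{F}^{-1}f\in L^\infty$ with the reverse norm inequality. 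Your two-step route (handle $s=0$ via $B^0\cong L^1$, then transport by $(1-\Delta)^{s/2}$) is conceptually equivalent but longer; note also that $\mathscr{F}(B^0)=L^1$ outright (for any $g\in L^1$ one has $\mathscr{F}^{-1}g\in C_b^0$ by Lemma~\ref{lemB1} and $\widehat{\mathscr{F}^{-1}g}=g$), so the Hahn--Banach extension and the separate uniqueness discussion are both unnecessary.

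One slip to correct in your verification chain: you write $\langle f,\varphi\rangle=\langle g,\mathscr{F}^{-1}(\langle\xi\rangle^{-s}\hat\varphi)\rangle$, but the argument of $g$ should be $(1-\Delta)^{s/2}\varphi=\mathscr{F}^{-1}(\langle\xi\rangle^{+s}\hat\varphi)$. Indeed, since $\langle\xi\rangle^{-s}\mathscr{F}^{-1}f=\mathscr{F}^{-1}g$,
\[
\langle f,\varphi\rangle=\int[\mathscr{F}^{-1}g]\,[\langle\xi\rangle^{s}\hat\varphi]\,d\xi=\bigl\langle g,(1-\Delta)^{s/2}\varphi\bigr\rangle=(L\circ(1-\Delta)^{-s/2})\bigl((1-\Delta)^{s/2}\varphi\bigr)=L(\varphi).
\]
With the exponent fixed, your argument is complete.
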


\begin{proof}
Let $f\in (B^s)'\subseteq \mathscr{S}'$. Then, for all $\varphi\in \mathscr{S}$, utilizing the pairing identity \eqref{df}, we have
\[
\left|\left\langle \langle \xi\rangle^{-s}\mathscr{F}^{-1}f, \langle \xi\rangle^s\hat{\varphi}\right\rangle\right|\leq \|f\|_{(B^s)'}\|\varphi \|_{B^s}.
\]
We know that the mapping $\varphi \in \mathscr{S}\mapsto \langle \xi\rangle^s\hat{\varphi}\in \mathscr{S}$ is an isomorphism. This implies that 
\[
\left|\left\langle \langle \xi\rangle^{-s}\mathscr{F}^{-1}f, \psi\right\rangle\right|\leq \|f\|_{(B^s)'}\|\psi \|_1,\quad \psi\in \mathscr{S}.
\]

Since $\mathscr{S}$ is dense in $L^1$, $f$ admits a unique continuous linear extension, still denoted by $f$, to the space $\tilde{B}^{-s}$ and
\[
\| f\|_{\tilde{B}^{-s}} \leq \|f\|_{(B^s)'}.
\]
\end{proof}

\subsection{Embedding of spectral Barron spaces}
In this subsection, we discuss the  embedding properties of spectral Barron spaces into various functional spaces.

\subsubsection{Spectral Barron spaces and interpolation}

For $ 0 < \theta < 1 $, the real interpolation space $ (B^r, B^t)_{\theta, 1} $ is defined as follows:
\[
(B^r, B^t)_{\theta, 1} := \left\{ f \in B^r + B^t; \; \rho^{-\theta} K(\cdot, f) \in L^1\left((0, \infty), d\rho/\rho \right) \right\},
\]
where the functional $ K(\rho, f) $ is given by
\[
K(\rho, f) := \inf \left\{ \|g\|_{B^r} + \rho \|h\|_{B^t}; \; g \in B^r, \; h \in B^t \; \text{such that} \; g + h = f \right\}.
\]
The Banach space $ (B^r, B^t)_{\theta, 1} $ is typically endowed with its natural norm
\[
\|f\|_{(B^r, B^t)_{\theta, 1}} := \left\| \rho^{-\theta} K(\cdot, f) \right\|_{L^1\left((0, \infty), \frac{d\rho}{\rho}\right)}.
\]

For further use, it is useful to note  that for all $0\le r\le s\le t$ we have
\[
B^t=B^t\cap B^r\hookrightarrow B^s \hookrightarrow B^r+B^t=B^r.
\]
This follows from the fact that $B^t\hookrightarrow B^s$ whenever $0\le s\le t$.

\begin{theorem}\label{thmemb}
Let $ 0 \leq r < s < t $, $ \theta = \frac{s - r}{t - r} $, and $ 0 < \tilde{\theta} < \theta $. Then we have the following continuous embeddings:
\[
(B^r, B^t)_{\theta, 1} \hookrightarrow B^s \hookrightarrow (B^r, B^t)_{\tilde{\theta}, 1}.
\]
\end{theorem}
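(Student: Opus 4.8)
The plan is to establish the two embeddings separately, in each case by estimating the $K$-functional $K(\rho, f)$ for $f \in B^s$ via an explicit decomposition in Fourier space. The natural device is to split $\hat f$ according to the size of $\langle\xi\rangle$: for a threshold $R > 0$, write $f = g_R + h_R$ where $\widehat{g_R} := \hat f \mathbf{1}_{\{\langle\xi\rangle \leq R\}}$ and $\widehat{h_R} := \hat f \mathbf{1}_{\{\langle\xi\rangle > R\}}$. Then $g_R \in B^t$ and $h_R \in B^r$, with $\|g_R\|_{B^t} \leq R^{t-s}\|f\|_{B^s}$ (since $\langle\xi\rangle^t = \langle\xi\rangle^{t-s}\langle\xi\rangle^s \leq R^{t-s}\langle\xi\rangle^s$ on the low-frequency part) and similarly $\|h_R\|_{B^r} \leq R^{r-s}\|f\|_{B^s}$ on the high-frequency part where $\langle\xi\rangle^{r-s} \leq R^{r-s}$. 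This yields $K(\rho, f) \leq \|h_R\|_{B^r} + \rho\|g_R\|_{B^t} \leq (R^{r-s} + \rho R^{t-s})\|f\|_{B^s}$; optimizing in $R$ (balancing the two terms by choosing $R^{t-r} \sim \rho^{-1}$) gives $K(\rho, f) \lesssim \rho^{\theta}\|f\|_{B^s}$ with $\theta = (s-r)/(t-r)$. But the bound $K(\rho,f)\lesssim \rho^\theta$ alone only shows $f$ lies in the "weak" interpolation space; to get $f \in (B^r,B^t)_{\tilde\theta,1}$ for $\tilde\theta < \theta$, I split the integral $\int_0^\infty \rho^{-\tilde\theta}K(\rho,f)\,d\rho/\rho$ at $\rho = 1$: near $\rho = \infty$ use $K(\rho,f) \leq \|f\|_{B^r} \leq \|f\|_{B^s}$ so that $\int_1^\infty \rho^{-\tilde\theta}\|f\|_{B^s}\,d\rho/\rho < \infty$, and near $\rho = 0$ use $K(\rho,f)\lesssim\rho^\theta\|f\|_{B^s}$ so that $\int_0^1 \rho^{\theta-\tilde\theta}\|f\|_{B^s}\,d\rho/\rho < \infty$ precisely because $\tilde\theta < \theta$. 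This gives $B^s \hookrightarrow (B^r,B^t)_{\tilde\theta,1}$.

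For the reverse embedding $(B^r, B^t)_{\theta,1} \hookrightarrow B^s$, I would argue directly on the norm. Given $f \in (B^r,B^t)_{\theta,1}$ and any decomposition $f = g + h$ with $g \in B^r$, $h \in B^t$, I want to bound $\|f\|_{B^s} = \int_{\mathbb R^n}\langle\xi\rangle^s|\hat f(\xi)|\,d\xi$. The idea is to integrate the pointwise estimate $\langle\xi\rangle^s|\hat f(\xi)| \leq \langle\xi\rangle^s|\hat g(\xi)| + \langle\xi\rangle^s|\hat h(\xi)|$ over dyadic shells $S_k := \{2^k \leq \langle\xi\rangle < 2^{k+1}\}$, $k \geq 0$ (together with the bounded region $\langle\xi\rangle < 1$ which is controlled by $\|f\|_{B^r}$). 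On $S_k$ one has $\langle\xi\rangle^s \approx 2^{ks}$, and for the optimal or near-optimal decomposition at scale $\rho = \rho_k := 2^{-k(t-r)}$ — which is, up to constants, the value realizing the balance in the $K$-functional at frequency $2^k$ — one estimates the shell contribution by $2^{ks}\big(\|g_k\|_{B^r}2^{-kr} + \|h_k\|_{B^t}2^{-kt}\big)$, wait, more carefully: on $S_k$, $\langle\xi\rangle^s|\hat g| = \langle\xi\rangle^{s-r}\langle\xi\rangle^r|\hat g| \leq 2^{(k+1)(s-r)}\langle\xi\rangle^r|\hat g|$ and $\langle\xi\rangle^s|\hat h| \leq 2^{-(k+1)(t-s)}\langle\xi\rangle^t|\hat h|$ (since $s < t$, $\langle\xi\rangle^{s-t} \leq 2^{-k(t-s)}$ on $S_k$ up to the factor from $2^{k+1}$). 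Hence the $S_k$-contribution to $\|f\|_{B^s}$ is $\lesssim 2^{k(s-r)}\|g\|_{B^r} + 2^{-k(t-s)}\|h\|_{B^t} \lesssim 2^{k(s-r)}\big(\|g\|_{B^r} + 2^{-k(t-r)}\|h\|_{B^t}\big)$, and taking the infimum over decompositions (which can be done shell-by-shell, or with a single $\rho$-dependent decomposition — here one must be slightly careful since the decomposition achieving $K(\rho_k, f)$ depends on $k$) bounds it by $2^{k(s-r)}K(2^{-k(t-r)}, f) = \rho_k^{-\theta}K(\rho_k, f)$. Summing over $k \geq 0$ then compares, via the standard equivalence between the integral defining the interpolation norm and its dyadic discretization $\sum_k 2^{-k\theta(t-r)\cdot(\text{something})}\cdots$ — concretely $\sum_{k\geq 0}\rho_k^{-\theta}K(\rho_k,f) \lesssim \int_0^1 \rho^{-\theta}K(\rho,f)\,d\rho/\rho \lesssim \|f\|_{(B^r,B^t)_{\theta,1}}$ using monotonicity of $K(\cdot,f)$ and $K(\rho,f)\leq\rho K(1,f)\cdot$const-type comparisons — yielding $\|f\|_{B^s} \lesssim \|f\|_{(B^r,B^t)_{\theta,1}}$.

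The main obstacle I anticipate is the reverse embedding, specifically the bookkeeping needed to pass from "for each $k$, choose a near-optimal decomposition $f = g_k + h_k$" to a bound on $\|f\|_{B^s}$: since the decomposition depends on $k$, I cannot literally substitute one global $g,h$, so I should instead either (a) observe that the sharp cutoff decomposition $\widehat{g_R},\widehat{h_R}$ above is simultaneously near-optimal at every dyadic scale up to a uniform constant — which is in fact true and is the cleanest route, letting me work with one explicit family — or (b) invoke the equivalent characterization of $(B^r,B^t)_{\theta,1}$ via the $J$-method or via sequences, at the cost of citing more interpolation machinery. Route (a) is preferable and essentially reduces the whole theorem to the single Fourier-truncation decomposition plus elementary dyadic summation; the remaining care is just to track that $B^r + B^t = B^r$ and $B^r \cap B^t = B^t$ with equivalent norms (noted in the excerpt) so that all the sums and integrals are over the correct ranges and the endpoint behavior as $\rho \to \infty$ contributes nothing new. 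I would also double-check the elementary inequalities $\langle\xi\rangle^{t-s}\leq R^{t-s}$ on $\{\langle\xi\rangle\leq R\}$ and $\langle\xi\rangle^{r-s}\leq R^{r-s}$ on $\{\langle\xi\rangle > R\}$, which use $r < s < t$ and are the crux of the $K$-functional estimate.
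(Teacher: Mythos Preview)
Your argument for the embedding $B^s \hookrightarrow (B^r,B^t)_{\tilde\theta,1}$ is essentially identical to the paper's: the same sharp Fourier cutoff at threshold $\mu$ (your $R$), the same balance $\mu = \rho^{-1/(t-r)}$ yielding $K(\rho,f)\le 2\rho^\theta\|f\|_{B^s}$ for $0<\rho<1$, and the same split of the integral at $\rho=1$ using $K(\rho,f)\le\|f\|_{B^r}$ for $\rho\ge 1$.

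For the first embedding $(B^r,B^t)_{\theta,1}\hookrightarrow B^s$ you take a genuinely different route. The paper observes that the interpolation inequality $\|f\|_{B^s}\le\|f\|_{B^r}^{1-\theta}\|f\|_{B^t}^{\theta}$ (their Proposition~\ref{pro2}\,(i)) places $B^s$ in the class $J_\theta(B^r,B^t)$, and then simply invokes the abstract fact \cite[Proposition~1.20]{Lu} that $J_\theta$ implies $(B^r,B^t)_{\theta,1}\hookrightarrow B^s$. Your direct dyadic argument---bounding $\int_{S_k}\langle\xi\rangle^s|\hat f|\lesssim \rho_k^{-\theta}K(\rho_k,f)$ with $\rho_k=2^{-k(t-r)}$ via a $k$-dependent near-optimal decomposition, then summing and comparing to the integral norm---is in effect a hands-on proof of that abstract proposition specialized to this setting. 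It works, and has the merit of being self-contained, but the paper's one-line appeal to the $J_\theta$ criterion is considerably shorter and avoids the bookkeeping you rightly flag as the main obstacle. Your route~(a) resolution (allowing the decomposition to vary with $k$, since the $K$-functional at scale $\rho_k$ only requires \emph{some} decomposition) is the correct one; route~(b) via the $J$-method would essentially reproduce the proof of \cite[Proposition~1.20]{Lu} anyway.
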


\begin{proof}
By \cite[Definition 1.19]{Lu} and  \eqref{ii}, the space $ B^s $ belongs to the class $ J_\theta(B^r, B^t) $. Combining this with Proposition \ref{pro2} and \cite[Proposition 1.20]{Lu}, we deduce that $ (B^r, B^t)_{\theta, 1} \hookrightarrow B^s $.

To prove the second embedding, we start by decomposing each $ f \in B^s $ into two terms:
\[
f = g_\mu + h_\mu,\quad \mu >1,
\]
where
\[
g_\mu := \mathscr{F}^{-1}(\chi_{\{\langle \xi \rangle > \mu\}} \hat{f}), \quad h_\mu := \mathscr{F}^{-1}(\chi_{\{\langle \xi \rangle \leq \mu\}} \hat{f}).
\]
We verify that $ g_\mu \in B^r $ and $ h_\mu \in B^t $:
\[
\int_{\mathbb{R}^n} \langle \xi \rangle^r |\hat{g}_\mu(\xi)| \, d\xi = \int_{\{\langle \xi \rangle > \mu\}} \langle \xi \rangle^r |\hat{f}(\xi)| \, d\xi \leq \mu^{-(s - r)} \int_{\mathbb{R}^n} \langle \xi \rangle^s |\hat{f}(\xi)| \, d\xi,
\]
and
\[
\int_{\mathbb{R}^n} \langle \xi \rangle^t |\hat{h}_\mu(\xi)| \, d\xi = \int_{\{\langle \xi \rangle \leq \mu\}} \langle \xi \rangle^t |\hat{f}(\xi)| \, d\xi \leq \mu^{(t - s)} \int_{\mathbb{R}^n} \langle \xi \rangle^s |\hat{f}(\xi)| \, d\xi.
\]
Thus, we have
\[
\|g_\mu\|_{B^r} \leq \mu^{-(s - r)} \|f\|_{B^s}, \quad \|h_\mu\|_{B^t} \leq \mu^{(t - s)} \|f\|_{B^s}.
\]
For all $ \mu > 1 $, the $ K $-functional satisfies
\[
K(\rho, f) \leq \left( \mu^{-(s - r)} + \rho \mu^{(t - s)} \right) \|f\|_{B^s}, \quad \rho > 0.
\]

In the case $ 0 < \rho < 1 $, we choose $ \mu = \rho^{-1/(t - r)} $ in the inequality above to obtain
\[
K(\rho, f) \leq 2 \rho^{(s - r)/(t - r)} \|f\|_{B^s} = 2 \rho^\theta \|f\|_{B^s}, \quad 0 < \rho < 1.
\]
Hence, $ \rho^{-\tilde{\theta}} K(\rho, f) \in L^1((0, 1), d\rho/\rho) $. On the other hand, if $ \rho \geq 1 $, since $ K(\rho, f) \leq \|f\|_{B^r} \leq \|f\|_{B^s} $ for all $ f \in B^s $, we have $ \rho^{-\tilde{\theta}} K(\rho, f) \in L^1((1, \infty), d\rho/\rho) $.

Combining these two cases, we conclude that $ \rho^{-\tilde{\theta}} K(\rho, f) \in L^1((0, \infty), d\rho/\rho) $, and thus
\[
\|f\|_{(B^r, B^t)_{\tilde{\theta}, 1}} = \left\| \rho^{-\tilde{\theta}} K(\rho, f) \right\|_{L^1((0, \infty), d\rho/\rho)} \leq \mathbf{c} \|f\|_{B^s}, \quad f \in B^s,
\]
where $ \mathbf{c} = \mathbf{c}(r, s, t) > 0 $ is a constant. Therefore, $ B^s \hookrightarrow (B^r, B^t)_{\tilde{\theta}, 1} $.
\end{proof}
The above theorem demonstrates that $B^s$ is nearly an interpolation space; however, it remains unknown whether $B^s$ coincides exactly with $(B^r, B^t)_{\theta,1}$.

\subsubsection{Embedding by Sobolev spaces}

For the case $ n \geq 2 $, we invoke \cite[Theorem 3.4]{PW} to establish the following Hardy-type inequality:
\begin{equation}\label{har1}
\int_{\mathbb{R}^n}(1+|\xi|)^{1-n}|\hat{f}|d\xi\le \vartheta \|\nabla f\|_1,\quad f\in W^{1,1},
\end{equation}
where $ \vartheta > 0 $ is a constant independent of $ f $.

The above observation yields the following results. 

\begin{theorem}\label{thm_emSobolev}
Let $k \geq 0$ be an integer. Then, $W^{n+k,1} \hookrightarrow B^k$.
\end{theorem}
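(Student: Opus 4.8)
The plan is to reduce the statement $W^{n+k,1}\hookrightarrow B^k$ to the Hardy-type inequality \eqref{har1} by a derivative-shifting argument. The case $n=1$ is special and should be handled separately: there $W^{1+k,1}\hookrightarrow B^k$ should follow directly from the one-dimensional fact that $\|\hat{g}\|_\infty\le \|g\|_1$ together with the elementary bound $\int_{\mathbb{R}}(1+|\xi|)^{-1}\langle\xi\rangle\,|\hat g|\,d\xi$ controlled by $\|g\|_{1}+\|g'\|_1$ after writing $\widehat{g'}(\xi)=i\xi\hat g(\xi)$; but since the paper invokes \cite[Theorem 3.4]{PW} only for $n\ge 2$, I would state the theorem for $n\ge 2$ (or note the trivial modification for $n=1$) and focus on $n\ge 2$.

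For $n\ge 2$ the key steps, in order, are: (1) Let $f\in W^{n+k,1}$. It suffices to bound $\int_{\mathbb{R}^n}\langle\xi\rangle^k|\hat f(\xi)|\,d\xi$; since $\langle\xi\rangle^k$ and $(1+|\xi|)^k$ are pointwise comparable up to a dimensional constant, I may work with $(1+|\xi|)^k$ instead. (2) Write $(1+|\xi|)^k=(1+|\xi|)^{1-n}\cdot(1+|\xi|)^{n+k-1}$, and observe that $(1+|\xi|)^{n+k-1}$ is bounded by a constant times $\sum_{|\alpha|\le n+k-1}|\xi^\alpha|$, so that
\[
(1+|\xi|)^k|\hat f(\xi)|\le C\sum_{|\alpha|\le n+k-1}(1+|\xi|)^{1-n}|\xi^\alpha\hat f(\xi)|.
\]
(3) For each multi-index $\alpha$ with $|\alpha|\le n+k-1$, note $\xi^\alpha\hat f(\xi)=(-i)^{|\alpha|}\widehat{\partial^\alpha f}(\xi)$, and $\partial^\alpha f\in W^{1,1}$ because $f\in W^{n+k,1}$ and $|\alpha|+1\le n+k$; hence \eqref{har1} applies to $\partial^\alpha f$, giving
\[
\int_{\mathbb{R}^n}(1+|\xi|)^{1-n}|\xi^\alpha\hat f(\xi)|\,d\xi=\int_{\mathbb{R}^n}(1+|\xi|)^{1-n}|\widehat{\partial^\alpha f}(\xi)|\,d\xi\le\vartheta\|\nabla\partial^\alpha f\|_1.
\]
(4) Summing over $\alpha$ and using $\|\nabla\partial^\alpha f\|_1\le C\|f\|_{W^{n+k,1}}$ yields $\|f\|_{B^k}\le C\|f\|_{W^{n+k,1}}$. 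Finally, to know that $f\in C_b^0$ (so that it genuinely lies in $B^k$ as defined in \eqref{eq_SBarronSpace}), I would note that the above shows $\hat f\in L^1$, hence by Lemma \ref{lemB1} (applied after the identification preceding the definition of $B^0$) $f$ agrees a.e. with a function in $C_b^0$; alternatively one can cite the standard Sobolev embedding $W^{n,1}\hookrightarrow C_b^0$.

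I expect the only genuine subtlety to be bookkeeping: verifying that $\partial^\alpha f$ lies in $W^{1,1}$ for every $|\alpha|\le n+k-1$ (which is exactly the reason the exponent $n+k$ appears, leaving one derivative to spare for \eqref{har1}), and making the comparison $(1+|\xi|)^{n+k-1}\lesssim\sum_{|\alpha|\le n+k-1}|\xi^\alpha|$ precise. Neither of these is deep; the real content of the theorem is entirely carried by the Hardy inequality \eqref{har1} borrowed from \cite{PW}, and the proof is essentially the observation that \eqref{har1} "costs" $n-1$ derivatives to pass from $\|\cdot\|_1$ on the Fourier side to $\|\nabla\cdot\|_1$ on the physical side, so raising the Fourier weight to $(1+|\xi|)^k$ costs $k$ more, for a total of $n+k$ derivatives (with the last one eaten by the gradient in \eqref{har1}, i.e.\ $n+k-1+1$).
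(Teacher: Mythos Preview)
Your argument for $n\ge 2$ is correct and is essentially the paper's proof, just with the bookkeeping arranged a little differently: you expand $(1+|\xi|)^{n+k-1}\lesssim\sum_{|\alpha|\le n+k-1}|\xi^\alpha|$ and apply \eqref{har1} to each $\partial^\alpha f$, while the paper first isolates the top-order piece to obtain \eqref{har2} and then uses the elementary comparison $(1+|\xi|^m)\ge 2^{1-m}(1+|\xi|)^m$. One caution on your side remark: the $n=1$ sketch does not work as written, since after bounding $\langle\xi\rangle|\hat g(\xi)|\le \|g\|_1+\|g'\|_1$ you are left integrating $(1+|\xi|)^{-1}$ over $\mathbb{R}$, which diverges; the paper's inequality \eqref{har1} and proof are explicitly for $n\ge 2$, so it is safest simply to restrict to that case.
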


\begin{proof}
Let $ m \geq 0 $ be an integer and $ f \in W^{m+1,1}$. By applying the above inequality to each partial derivative $ \partial^\alpha f $ for all multi-indices $ \alpha $ with $ |\alpha| \leq m $, we obtain
\begin{equation}\label{har2}
\int_{\mathbb{R}^n}|\xi|^m(1+|\xi|)^{1-n}|\hat{f}|d\xi\le \mathbf{c} \|f\|_{W^{m+1,1}},
\end{equation}
where $ \mathbf{c} = \mathbf{c}(n, m)>0 $ denotes a generic constant that may depend on $ n $ and $ m $.

Combining the inequalities \eqref{har1} and \eqref{har2}, we have
\[
\int_{\mathbb{R}^n} (1 + |\xi|^m) (1 + |\xi|)^{1-n} |\hat{f}(\xi)| \, d\xi \leq \mathbf{c} \|f\|_{W^{m+1,1}}.
\]
Furthermore, observe that
\[
\frac{1 + |\xi|^m}{(1 + |\xi|)^m} \geq 2^{1 - m}, \quad  |\xi| \geq 0.
\]
This inequality implies
\[
(1 + |\xi|^m) (1 + |\xi|)^{1-n} \geq 2^{1 - m} (1 + |\xi|)^{m + 1 - n}.
\]
Therefore, we deduce
\[
\int_{\mathbb{R}^n} \langle \xi \rangle^{m + 1 - n} |\hat{f}(\xi)| \, d\xi \leq \mathbf{c} \|f\|_{W^{m+1,1}}.
\]
Let $ k \geq 0 $ be an integer. By choosing $ m $ such that $ m + 1 - n = k $, we derive
\[
\int_{\mathbb{R}^n} \langle \xi \rangle^k |\hat{f}(\xi)| \, d\xi \leq \mathfrak{c} \|f\|_{W^{n + k, 1}}, \quad f \in W^{n + k, 1},
\]
where $ \mathfrak{c} = \mathfrak{c}(n, k) > 0 $ is a constant. That is we have proved that $W^{n+k,1}\hookrightarrow B^k$. 
\end{proof}

\subsubsection{Embedding into H\"older spaces}

We conduct a further in-depth investigation into the embedding properties of spectral Barron spaces into H\"older spaces. 
Before presenting the main result, we make several useful observations. Let $k \geq 0$ be an integer. From the relation $\mathscr{F}((-i)^{|\alpha|} \partial^\alpha f) = \xi^\alpha \hat{f}$, where $f \in \mathscr{S}'$, and by iterating Lemma \ref{lemB1}, we deduce that $B^k \hookrightarrow C_b^k$,  and therefore $B^k\hookrightarrow W^{k,\infty}$.

\begin{remark}\label{remess}
{\rm
Since $B^k\hookrightarrow C_b^k$ for all integer $k\ge 0$, Theorem \ref{thm_emSobolev} yields the known result $W^{n+k,1}\hookrightarrow C_b^k$ (e.g., \cite[Theorem 4.12, (1)]{AF}).
}
\end{remark}

Now, let $s = k + \theta$, where $k \geq 0$ is an integer and $0 < \theta < 1$. Since $B^s \hookrightarrow B^k$, it follows that $B^s \hookrightarrow C_b^k$. Fix a multi-index $\alpha = (\alpha_1, \ldots, \alpha_n) \in (\mathbb{N}\cup\{0\})^n$ such that $|\alpha| = k$, and set $g = \partial^\alpha f$. We can verify that $g \in B^\theta$.

For $0 < \beta \leq 1$, we define the semi-norm $[\cdot]_\beta$ as follows:
\[
[f]_\beta := \sup \left\{ \frac{|f(x) - f(y)|}{|x - y|^\beta}; \; x, y \in \mathbb{R}^n, x \neq y \right\}.
\]
We then define the space $C_b^{k,\beta}$ as
\[
C_b^{k,\beta} := \left\{ f \in C_b^k ; \; [\partial^\alpha f]_\beta < \infty, \; |\alpha| = k \right\}.
\]
It is a classical result that $C_b^{k,\beta}$ is a Banach space with respect to the norm
\[
\|f\|_{C_b^{k,\beta}} := \|f\|_{C_b^k} + \sup_{|\alpha| = k} [\partial^\alpha f]_\beta.
\]

\begin{theorem}\label{thmehs}
Let $k \geq 0$ be an integer and $0 < \theta < 1$. Then, $B^{k+\theta} \hookrightarrow C_b^{k,\gamma}$ for all $0 < \gamma < \theta$.
\end{theorem}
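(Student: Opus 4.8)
The plan is to reduce the claim, via the decomposition already set up before the statement, to the key case $k = 0$: if $s = k + \theta$, then for any multi-index $\alpha$ with $|\alpha| = k$ the function $g = \partial^\alpha f$ lies in $B^\theta$ (this was noted in the excerpt), and $B^{k+\theta}\hookrightarrow B^k\hookrightarrow C_b^k$ controls the lower-order part, so it suffices to show $B^\theta \hookrightarrow C_b^{0,\gamma}$ for every $0 < \gamma < \theta$, with the appropriate norm bound uniform in $f$. So first I would state and prove this base case as the heart of the argument, and then assemble the full statement by applying it to each $\partial^\alpha f$ and summing over $|\alpha| = k$.

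For the base case, let $g \in B^\theta$, so $\langle \xi\rangle^\theta \hat g \in L^1$. Since $B^\theta \hookrightarrow B^0 \hookrightarrow C_b^0$, we have the inversion formula $g(x) = (2\pi)^{-n}\int_{\mathbb{R}^n} e^{ix\cdot\xi}\hat g(\xi)\,d\xi$. Then for $x \neq y$,
\[
g(x) - g(y) = (2\pi)^{-n}\int_{\mathbb{R}^n} \left( e^{ix\cdot\xi} - e^{iy\cdot\xi}\right)\hat g(\xi)\,d\xi,
\]
and I would estimate $|e^{ix\cdot\xi} - e^{iy\cdot\xi}|$ by the elementary interpolation bound $|e^{ix\cdot\xi} - e^{iy\cdot\xi}| \leq 2^{1-\gamma}|x-y|^\gamma |\xi|^\gamma$, obtained by combining $|e^{ia} - e^{ib}| \leq |a - b|$ (from the mean value theorem) and $|e^{ia} - e^{ib}| \leq 2$ via the inequality $\min(u,2) \leq 2^{1-\gamma} u^\gamma$ for $u \geq 0$ and $0 < \gamma < 1$. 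This yields
\[
|g(x) - g(y)| \leq 2^{1-\gamma}(2\pi)^{-n}|x-y|^\gamma \int_{\mathbb{R}^n} |\xi|^\gamma |\hat g(\xi)|\,d\xi.
\]
Finally, since $0 < \gamma < \theta$, we have $|\xi|^\gamma \leq \langle\xi\rangle^\gamma \leq \langle\xi\rangle^\theta$, so the integral is bounded by $\|g\|_{B^\theta}$; hence $[g]_\gamma \leq 2^{1-\gamma}(2\pi)^{-n}\|g\|_{B^\theta}$, and together with $\|g\|_\infty \leq (2\pi)^{-n}\|g\|_{B^0} \leq (2\pi)^{-n}\|g\|_{B^\theta}$ this gives $g \in C_b^{0,\gamma}$ with $\|g\|_{C_b^{0,\gamma}} \leq \mathbf{c}\|g\|_{B^\theta}$.

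For the general case: given $f \in B^{k+\theta}$, the embedding $B^{k+\theta}\hookrightarrow C_b^k$ (valid because $B^{k+\theta}\hookrightarrow B^k\hookrightarrow C_b^k$) controls $\|f\|_{C_b^k}$ by $\|f\|_{B^{k+\theta}}$. For each $\alpha$ with $|\alpha| = k$, the identity $\mathscr{F}((-i)^{|\alpha|}\partial^\alpha f) = \xi^\alpha\hat f$ together with $|\xi^\alpha| \leq |\xi|^k \leq \langle\xi\rangle^k$ shows $\partial^\alpha f \in B^\theta$ with $\|\partial^\alpha f\|_{B^\theta} \leq \|f\|_{B^{k+\theta}}$; applying the base case gives $[\partial^\alpha f]_\gamma \leq \mathbf{c}\|f\|_{B^{k+\theta}}$. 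Summing over the finitely many $\alpha$ with $|\alpha| = k$ and adding the $C_b^k$ estimate yields $\|f\|_{C_b^{k,\gamma}} \leq \mathbf{c}(n,k)\|f\|_{B^{k+\theta}}$, which is the desired embedding.

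I do not expect a serious obstacle here; the only point requiring a little care is the sharp two-sided estimate on $|e^{ix\cdot\xi}-e^{iy\cdot\xi}|$ — making sure the interpolation between the Lipschitz bound and the trivial bound is done correctly so that the exponent $\gamma$ (and the requirement $\gamma < \theta$, needed for integrability against $\langle\xi\rangle^\theta|\hat g|$) comes out right. Everything else is a routine assembly using embeddings and Fourier-multiplier facts already established in the excerpt.
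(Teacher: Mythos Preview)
Your proof is correct, and the reduction to $k=0$ matches the paper's first step. However, for the base case $B^\theta \hookrightarrow C_b^{0,\gamma}$ you take a genuinely different and more elementary route than the paper. The paper argues via real interpolation: it invokes Theorem~\ref{thmemb} to get $B^\theta \hookrightarrow (B^0,B^1)_{\beta,1}$ for $\gamma<\beta<\theta$, then the standard embedding $(B^0,B^1)_{\beta,1}\hookrightarrow (B^0,B^1)_{\gamma,\infty}$, and finally extracts the H\"older bound from the $K$-functional by decomposing $f=g+h$ with $g\in B^0$, $h\in B^1$ and using $|f(x)-f(y)|\le 2\|g\|_\infty + |x-y|\,\|h\|_{C_b^1}$. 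Your direct Fourier estimate $|e^{ix\cdot\xi}-e^{iy\cdot\xi}|\le 2^{1-\gamma}|x-y|^\gamma|\xi|^\gamma$ bypasses the interpolation machinery entirely and is self-contained. In fact, since $|\xi|^\gamma\le\langle\xi\rangle^\gamma\le\langle\xi\rangle^\theta$ already holds for $\gamma\le\theta$, your argument yields the slightly sharper endpoint $B^\theta\hookrightarrow C_b^{0,\theta}$, whereas the paper's route through Theorem~\ref{thmemb} forces the strict inequality $\gamma<\theta$. The paper's approach, by contrast, exercises and illustrates the interpolation framework developed in the preceding subsection.
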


\begin{proof}
In light of the preceding observations, it suffices to provide the proof for the case $k = 0$. Let $0 < \gamma < \beta < \theta$. From Theorem \ref{thmemb}, we have $B^\theta \hookrightarrow (B^0, B^1)_{\beta, 1}$, and by \cite[Proposition 1.4]{Lu}, $(B^0, B^1)_{\beta, 1} \hookrightarrow (B^0, B^1)_{\gamma, \infty}$. Here, the space $(B^0, B^1)_{\gamma, \infty}$ is defined as
\[
(B^0, B^1)_{\gamma, \infty} := \left\{ f \in B^0 + B^1 ; \; \rho^{-\gamma} K(\cdot, f) \in L^\infty((0, \infty)) \right\},
\]
equipped with the norm
\[
\|f\|_{(B^0, B^1)_{\gamma, \infty}} := \|\rho^{-\gamma} K(\cdot, f)\|_{L^\infty((0, \infty))}.
\]

Let $f \in B^\theta \subset B^0 + B^1$ with $f = g + h$, where $g \in B^0$ and $h \in B^1$. In this case, we have
\begin{equation}\label{ho1}
\|g\|_{C_b^0} \leq c \|g\|_{B^0}, \quad \|h\|_{C_b^1} \leq c \|h\|_{B^1},
\end{equation}
where $c = c(\theta) > 0$ is a constant.

Let $x, y \in \mathbb{R}^n$ with $x \neq y$. Using \eqref{ho1}, we obtain
\[
|f(x) - f(y)| \leq 2c \|g\|_{C_b^0} + \|h\|_{C_b^1} |x - y| \leq 2c \|g\|_{B^0} + c \|h\|_{B^1} |x - y|.
\]
Taking the infimum with respect to $g$ and $h$ satisfying $g + h = f$, we obtain
\[
|f(x) - f(y)| \leq 2c K(|x - y|, f) \leq 2c |x - y|^\gamma \|f\|_{(B^0, B^1)_{\gamma, \infty}}.
\]
This, combined with the fact that $B^\theta \hookrightarrow (B^0, B^1)_{\gamma, \infty}$, implies
\begin{equation}\label{hol}
|f(x) - f(y)| \leq c' |x - y|^\gamma \|f\|_{B^\theta},
\end{equation}
where $c' = c'(\theta, \gamma)>0$ is a constant. The proof is complete.
\end{proof}

\begin{remark}\label{remehs}{\rm
We observe that \eqref{hol} shows that the unit ball of $B^\theta$ is equicontinuous.
}\end{remark}

As a consequence of Theorem \ref{thmehs}, we have the following embedding result, where we set
\begin{align*}
&W_{\rm{loc}}^{k+\sigma,p}:=\{f:\mathbb{R}^n\rightarrow \mathbb{C}\; \mbox{measurable};
\\
&\hskip 4cm f|_{\Omega}\in W^{k+\sigma,p}(\Omega)\; \mbox{for all bounded subset}\; \Omega\; \mbox{of}\; \mathbb{R}^n\}.
\end{align*}
\begin{corollary}\label{CR}
For all $p\ge 1$, an integer $k\ge 0$, and for all $0<\theta<1$ and $0<\sigma <\frac{\theta}{p}$, $B^{k+\theta}$ is embedded in $W_{\rm{loc}}^{k+\sigma,p}$.
\end{corollary}
\begin{proof}
Let $k\ge 0$ be an integer, $0<\theta<1$, and $0<\gamma <\theta$. By Theorem \ref{thmehs}, we have the embedding $B^{k+\theta}\hookrightarrow C_b^{k,\gamma}$. On the other hand, for any bounded open subset $\Omega \subset \mathbb{R}^n$, it follows from \cite[Definition 1.3.2.1]{Gr} that $C^{k,\gamma}(\overline{\Omega})\hookrightarrow W^{k+\sigma,p}(\Omega)$ whenever $p\ge 1$ and $\sigma p < \gamma$. This completes the proof.
\end{proof}

In light of \cite[Theorem 8.25]{Ch}, we obtain the following result as a consequence of Theorem \ref{thmehs}.

\begin{corollary}\label{corhm}
Let $k \geq 0$ be an integer and $0 < \gamma < \theta < 1$. For all $m \in B^{k+\theta}$, the operator $f \in H^{k+\gamma} \mapsto mf \in H^{k+\gamma}$ is bounded.
\end{corollary}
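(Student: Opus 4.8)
The goal is to show that multiplication by $m \in B^{k+\theta}$ is bounded on $H^{k+\gamma}$ for $0<\gamma<\theta<1$. By Theorem \ref{thmehs} we have $B^{k+\theta} \hookrightarrow C_b^{k,\gamma}$, so it suffices to prove that pointwise multiplication by a function in the H\"older--Zygmund-type space $C_b^{k,\gamma}$ is a bounded operator on $H^{k+\gamma}$. This is precisely the content of \cite[Theorem 8.25]{Ch} (or any of the standard references on multiplication in fractional Sobolev spaces), so the proof is essentially a two-line reduction: estimate $\|mf\|_{H^{k+\gamma}} \le C\|m\|_{C_b^{k,\gamma}}\|f\|_{H^{k+\gamma}}$ by the cited theorem, then bound $\|m\|_{C_b^{k,\gamma}} \le C\|m\|_{B^{k+\theta}}$ by Theorem \ref{thmehs}.

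First I would record the embedding: for $m \in B^{k+\theta}$, Theorem \ref{thmehs} gives $m \in C_b^{k,\gamma}$ with $\|m\|_{C_b^{k,\gamma}} \le c\,\|m\|_{B^{k+\theta}}$ for a constant $c = c(k,\theta,\gamma)>0$. Next I would invoke \cite[Theorem 8.25]{Ch}, which asserts that for $0<\gamma<1$ and integer $k\ge 0$, there is a constant $C=C(n,k,\gamma)>0$ such that for every $a \in C_b^{k,\gamma}$ and every $f \in H^{k+\gamma}$ one has $af \in H^{k+\gamma}$ with $\|af\|_{H^{k+\gamma}} \le C\,\|a\|_{C_b^{k,\gamma}}\,\|f\|_{H^{k+\gamma}}$. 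Combining these two estimates with $a = m$ yields
\[
\|mf\|_{H^{k+\gamma}} \le C\,\|m\|_{C_b^{k,\gamma}}\,\|f\|_{H^{k+\gamma}} \le C c\,\|m\|_{B^{k+\theta}}\,\|f\|_{H^{k+\gamma}}, \quad f \in H^{k+\gamma},
\]
which shows that $f \mapsto mf$ is a bounded linear operator on $H^{k+\gamma}$ with operator norm controlled by $Cc\,\|m\|_{B^{k+\theta}}$.

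There is essentially no obstacle here beyond correctly matching the hypotheses of the cited multiplication theorem: one must check that $C_b^{k,\gamma}$ as defined in this paper (bounded $C^k$ functions whose top-order derivatives are $\gamma$-H\"older) is exactly the multiplier class used in \cite[Theorem 8.25]{Ch}, and that the strict inequality $\gamma<\theta$ is what licenses the use of Theorem \ref{thmehs} (the borderline case $\gamma=\theta$ is not available from that theorem). If one wished to avoid citing \cite{Ch}, the alternative is to prove the multiplication estimate directly: for $k=0$ via the Kato--Ponce / fractional Leibniz inequality together with the Sobolev characterization of $H^\gamma$ by the Gagliardo seminorm, and for general $k$ by the Leibniz rule distributing $\partial^\alpha$ with $|\alpha|\le k$ over the product and estimating each term using the case $k=0$ applied to the appropriate derivatives of $m$ and $f$; but given that \cite[Theorem 8.25]{Ch} is already in the bibliography, the short reduction above is the natural route.
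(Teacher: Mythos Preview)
Your proposal is correct and follows exactly the paper's own argument: the paper states the corollary ``in light of \cite[Theorem 8.25]{Ch}\ldots as a consequence of Theorem \ref{thmehs}'' without giving any further details, so your two-step reduction (embed $B^{k+\theta}\hookrightarrow C_b^{k,\gamma}$ via Theorem \ref{thmehs}, then apply the multiplier estimate from \cite[Theorem 8.25]{Ch}) is precisely what is intended.
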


The above corollary then inspires us to investigate the following compact multiplier results, whose proof was inspired by that of \cite[Theorem 2.3]{CLLZ}. 
\begin{proposition}\label{procm}
Let $ 0 \leq s < t $ and fix $ m \in B^t $. Then the multiplication operator
\[
\mathbf{M}_m: f\in B^t\mapsto mf\in B^s
\]
is compact.
\end{proposition}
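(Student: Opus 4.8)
The plan is to combine the multiplication estimate \eqref{prod} with the compact embedding Theorem \ref{thmcem}. The obstruction to applying Theorem \ref{thmcem} directly is that it is stated for $B_K^t$, i.e.\ functions with support in a fixed compact set, whereas a general $m\in B^t$ need not be compactly supported; so the first task is to reduce to the compactly supported case by a truncation argument. Concretely, fix $s<u<t$ and write $m = m_R + \tilde m_R$, where $m_R := \chi_R m$ for a smooth cutoff $\chi_R\in C_0^\infty$ equal to $1$ on the ball of radius $R$, and $\tilde m_R := (1-\chi_R)m$. I would first show that $\|\tilde m_R\|_{B^u}\to 0$ as $R\to\infty$: indeed $\tilde m_R = m - m_R = m - \chi_R m$, and since $\chi_R m \to m$ in $B^u$ (this is where one uses $u<t$, so that $\langle\xi\rangle^u\hat m\in L^1$ and the dominated convergence / density machinery applies — more precisely $\widehat{\chi_R m} = (2\pi)^{-n}\hat\chi_R * \hat m$ converges to $\hat m$ suitably in the weighted $L^1$), the tail vanishes. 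Then by \eqref{prod} applied with exponent $s$,
\[
\|\mathbf{M}_{\tilde m_R} f\|_{B^s} = \|\tilde m_R f\|_{B^s}\le 2^{s/2}(2\pi)^{-n}\|\tilde m_R\|_{B^s}\|f\|_{B^s}\le 2^{s/2}(2\pi)^{-n}\|\tilde m_R\|_{B^u}\|f\|_{B^t},
\]
so $\mathbf{M}_{\tilde m_R}\to 0$ in $\mathscr{B}(B^t,B^s)$ as $R\to\infty$. Since the space of compact operators is closed in the operator norm, it suffices to prove that each $\mathbf{M}_{m_R}$ is compact from $B^t$ to $B^s$.

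For the compactly supported piece, let $K$ be a compact set (a closed ball) containing the support of $\chi_R$, and pick an intermediate exponent $v$ with $s<v<t$. For $f\in B^t$ we have $m_R f \in B_K^v$ by \eqref{prod} with exponent $v$: $\mathrm{supp}(m_R f)\subset K$ and $\|m_R f\|_{B^v}\le 2^{v/2}(2\pi)^{-n}\|m_R\|_{B^v}\|f\|_{B^t}$ (using $v<t$ so that $m_R\in B^t\hookrightarrow B^v$ and $f\in B^t\hookrightarrow B^v$). Thus $\mathbf{M}_{m_R}$ factors as
\[
B^t \xrightarrow{\ \mathbf{M}_{m_R}\ } B_K^v \xhookrightarrow{\ \ } B^s,
\]
where the first arrow is bounded and the second is compact by Theorem \ref{thmcem} (valid since $s<v$). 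A composition of a bounded operator with a compact operator is compact, so $\mathbf{M}_{m_R}\in\mathscr{B}(B^t,B^s)$ is compact.

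Combining the two steps: $\mathbf{M}_m = \mathbf{M}_{m_R} + \mathbf{M}_{\tilde m_R}$, where $\mathbf{M}_{m_R}$ is compact for every $R$ and $\|\mathbf{M}_{\tilde m_R}\|_{\mathscr{B}(B^t,B^s)}\to 0$ as $R\to\infty$. Hence $\mathbf{M}_m$ is a norm limit of compact operators and is therefore compact. The main obstacle is the truncation lemma $\|\chi_R m - m\|_{B^u}\to 0$ (for $u<t$): one must justify that $\widehat{\chi_R m}$ converges to $\hat m$ in $L^1(\mathbb{R}^n, \langle\xi\rangle^u d\xi)$, which follows by writing $\widehat{\chi_R m} = (2\pi)^{-n}\hat\chi_R * \hat m$, noting $(2\pi)^{-n}\int\hat\chi_R = \chi_R(0)=1$, and using an approximate-identity argument together with $\langle\xi\rangle^t\hat m\in L^1$ and Peetre's inequality to control the weighted tails uniformly; alternatively one can invoke the density of $\mathscr{S}$ in $B^t$ (Proposition \ref{pro2}(ii)) to first replace $m$ by a Schwartz function, for which the cutoff convergence is elementary, and then pass to the limit using the uniform bound from \eqref{prod}.
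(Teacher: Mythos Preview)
Your approach is correct and genuinely different from the paper's. The paper works entirely on the Fourier side: writing $\widehat{mf}=(2\pi)^{-n}\hat m * \hat f$, it shows directly that the set $\{\langle\xi\rangle^s\,\hat m * \hat f:\ \|\langle\xi\rangle^t\hat f\|_1\le 1\}$ has compact closure in $L^1$ by verifying the Fr\'echet--Kolmogorov criterion (uniform tail smallness from the weight gap $t-s$, and $L^1$-equicontinuity of translates obtained after approximating $\hat m$ by a $C_0^\infty$ function). Theorem~\ref{thmcem} is not invoked at all.

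Your argument is instead a physical-space truncation that reduces everything to results already established: the product estimate~\eqref{prod} and the compact embedding Theorem~\ref{thmcem}. This is more modular and arguably cleaner. The one point needing care is the truncation lemma $\chi_R m\to m$ in $B^u$; your density route (approximate $m$ by $\phi\in\mathscr{S}$ in $B^t$, use the uniform bound $\sup_{R\ge 1}\|\chi_R\|_{B^u}\le\|\chi\|_{B^u}$ which follows from the substitution $\eta=R\xi$, and note that $(1-\chi_R)\phi\to 0$ in $\mathscr{S}$ hence in $B^u$) is correct. The intermediate exponents $u,v$ are in fact unnecessary: $u=s$ and $v=t$ already work. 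One advantage of the paper's direct Fourier-side argument is that it transfers verbatim to Proposition~\ref{procm2}, where the operator $(1-\Delta)^{-s}m$ does not preserve compact supports and your factorisation through $B_K^v$ is no longer available without further modification.
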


\begin{proof}
We aim to demonstrate that the closure of the set  
\[
\mathcal{C} := \left\{ g = \langle \xi \rangle^s \widehat{mf} ; \; \langle \xi \rangle^t \hat{f} \in L^1, \; \|\langle \xi \rangle^t \hat{f}\|_1 \leq 1 \right\}  
\]  
is compact in $ L^1 $.  

Given that $ \widehat{mf} = (2\pi)^{-n} \hat{m} \ast \hat{f} $, it suffices to show that the set  
\[
\mathcal{L} := \left\{ g = \langle \xi \rangle^s h \ast f ; \; \langle \xi \rangle^t f \in L^1, \; \|\langle \xi \rangle^t f\|_1 \leq 1 \right\},  
\]  
where $ h = \hat{m} \in L^1 $ satisfies $ \langle \xi \rangle^t h \in L^1 $, has a compact closure in $ L^1 $.  

Define $ \vartheta := 2^{t/2} \|\langle \xi \rangle^t h\|_1 = 2^{t/2} \|m\|_{B^t} $. We utilize the following inequality, whose proof is analogous to that of (iv) in Proposition \ref{pro2}:  
\[
\|\langle \xi \rangle^t h \ast f\|_1 \leq 2^{t/2} \|\langle \xi \rangle^t h\|_1 \|\langle \xi \rangle^t f\|_1 \leq \vartheta \|\langle \xi \rangle^t f\|_1 \quad \text{if} \; \langle \xi \rangle^t f \in L^1.  
\]  
Let $ g = \langle \xi \rangle^s h \ast f \in \mathcal{L} $. Then,  
\[
\int_{|\xi| > \rho} |g(\xi)| \, d\xi \leq \sup_{|\xi| > \rho} \langle \xi \rangle^{-(t-s)} \|\langle \xi \rangle^t h \ast f\|_1 \leq \vartheta \sup_{|\xi| > \rho} \langle \xi \rangle^{-(t-s)}.  
\]  
Consequently, for any $ \epsilon > 0 $, there exists $ \rho_\epsilon = \rho_\epsilon(\epsilon, s, t) $ such that for all $ \rho \geq \rho_\epsilon $ and $ |\eta| \leq 1 $,  
\begin{equation}\label{ce1}
\int_{|\xi|>\rho}|g(\xi+\eta)|d\xi \le \epsilon.
\end{equation}

For $ \epsilon > 0 $, $ |\eta| < 1 $, and $ \rho \geq \rho_\epsilon $, we decompose  
\[
\int_{\mathbb{R}^n} \left| g(\xi + \eta) - g(\xi) \right| d\xi = I_1 + I_2,  
\]  
where  
\begin{align*}
&I_1 := \int_{|\xi| > \rho} |g(\xi + \eta) - g(\xi)| \, d\xi, 
\\
&I_2 := \int_{|\xi| \leq \rho} |g(\xi + \eta) -g(\xi) |\, d\xi.  
\end{align*} 
From the preceding inequality  \eqref{ce1}, it follows that
\begin{equation}\label{ce2}
|I_1| \le 2\epsilon .
\end{equation}

Next, we note that  
\[
I_2 \le J_1 + J_2,  
\]  
where  
\[
J_1 := \int_{|\xi| \leq \rho} | \left( \langle \xi + \eta \rangle^s - \langle \xi \rangle^s \right) h \ast f(\xi + \eta)| \, d\xi,  
\]  
\[
J_2 := \int_{|\xi| \leq \rho}| \langle \xi \rangle^s \left( h \ast f(\xi + \eta) - h \ast f(\xi) \right)| \, d\xi.  
\]  
Since $ \xi \mapsto \langle \xi \rangle^s $ is uniformly continuous on the ball $ B(0, \rho + 1) $, there exists $ \delta_0 = \delta_0(\epsilon, s)>0 $ such that  
\[
\left| \langle \xi + \eta \rangle^s - \langle \xi \rangle^s \right| \leq \vartheta^{-1} \epsilon, \quad |\xi| \leq \rho, \; |\eta| \leq \delta_0.  
\]  
This implies  
\[
|J_1| \leq \epsilon, \quad  |\eta| \leq \delta_0.  
\]  
Let $ \upsilon = \max_{|\xi| \leq \rho} \langle \xi \rangle^s $. Then,  
\begin{equation}\label{ce4}
|J_2|\le \upsilon \int_{|\xi|\le \rho} |h\ast f(\xi+\eta)-h\ast f(\xi)|d\xi=:\upsilon \tilde{J}_2.
\end{equation}
Choose $ \phi \in C_0^\infty $ such that $ \|\phi - h\|_1 \leq \epsilon $, and let  
\begin{equation}\label{ce4.1}
\tilde{J}_2=K_1+K_2,
\end{equation}
where  
\begin{align*}
& K_1 := \int_{|\xi| \leq \rho} \left| (h - \phi) \ast f(\xi + \eta) - (h - \phi) \ast f(\xi) \right| \, d\xi,  \\
& K_2 := \int_{|\xi| \leq \rho} \left| \phi \ast f(\xi + \eta) - \phi \ast f(\xi) \right| \, d\xi.  
\end{align*}
We observe that  
\begin{equation}\label{ce5}
K_1\le 2\|h-\phi\|_1\|f\|_1\le 2\|h-\phi\|_1\le 2\epsilon.
\end{equation}
On the other hand,  
\begin{align*}
K_2 & = \int_{|\xi| \leq \rho} \left| \int_{\mathbb{R}^n} \phi(y) \left( f(\xi + \eta - y) - f(\xi - y) \right) dy \right| \, d\xi   \\
& \leq \int_{|\xi| \leq \rho} \int_{\mathbb{R}^n} |\phi(\xi + \eta - y) - \phi(\xi - y)| |f(y)| \, dy \, d\xi \\
& \leq \int_{\mathbb{R}^n}|f(y)| \int_{|\xi|\le \rho} |\phi(\xi+\eta-y) -\phi(\xi-y)| d\xi dy.
\end{align*}
Since $ \phi $ has compact support, it is uniformly continuous. Thus, there exists $ \delta_1 = \delta_1(\epsilon, h) >0 $ such that  
\[
|\phi(\xi + \eta - y) - \phi(\xi - y)| \leq |B(0, \rho)|^{-1} \epsilon, \quad  y \in \mathbb{R}^n, \; |\xi| \leq \rho, \; |\eta| \leq \delta_1.  
\]  
This yields  
\begin{equation}\label{ce6}
K_2 \leq \epsilon, \quad  |\eta| \leq \delta_1.  
\end{equation}
Let $ \delta := \min(\delta_0, \delta_1) $. Combining \eqref{ce2}, \eqref{ce4}, \eqref{ce4.1}, \eqref{ce5}, and \eqref{ce6}, we obtain  
\begin{equation}\label{ce7}
\int_{\mathbb{R}^n}|g(\xi+\eta)-g(\xi)|d\xi\le (2+3\upsilon)\epsilon, \quad |\eta|\le \delta.
\end{equation}
In light of \eqref{ce1} and \eqref{ce7}, by applying \cite[Corollary 4.27]{Br}, we deduce that $ \mathcal{L} $ has a compact closure in $ L^1 $. This completes the proof.  
\end{proof}

Let $s > 0$ and $m \in B^0$. By leveraging formula \eqref{5.1}, we can establish the following identity:
\[
\mathscr{F}((1 - \Delta)^{-s}(mf)) = (2\pi)^{-n}\langle \xi \rangle^{-2s} \widehat{mf},\quad f \in B^0.
\]

With this identity in hand, we can follow a similar line of reasoning as that used in the proof of Proposition \ref{procm} to derive the following result.
\begin{proposition}\label{procm2}
Let $s > 0$ and $m \in B^0$. Then the operator $(1 - \Delta)^{-s}m: B^0 \rightarrow B^0$ is compact.
\end{proposition}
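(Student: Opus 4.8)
The plan is to follow the proof of Proposition \ref{procm}, with the bounded weight $\langle\xi\rangle^{-2s}$ --- which now \emph{decays} at infinity --- playing the role of $\langle\xi\rangle^s$ there, and then to invoke the Fréchet--Kolmogorov criterion \cite[Corollary 4.27]{Br}. First I would note that the operator is well defined: for $f\in B^0$ we have $mf\in B^0$ by Proposition \ref{pro2}(iv), so $(1-\Delta)^{-s}(mf)$ is given by \eqref{5.1}. Moreover, by Lemma \ref{lemB1} together with the definition of the $B^0$-norm, $\mathscr{F}$ is an isometric isomorphism from $B^0$ onto $L^1$; consequently the compactness of $(1-\Delta)^{-s}m$ on $B^0$ is equivalent to the relative compactness in $L^1$ of the image of the unit ball of $B^0$ under $f\mapsto\mathscr{F}\big((1-\Delta)^{-s}(mf)\big)$. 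By the identity displayed just before the proposition and $\widehat{mf}=(2\pi)^{-n}\hat m\ast\hat f$, that image equals, up to a fixed scalar factor, the set
\[
\mathcal{C}:=\big\{\, g=\langle\xi\rangle^{-2s}\,h\ast u \ ;\ u\in L^1,\ \|u\|_1\le 1 \,\big\},\qquad h:=\hat m\in L^1,
\]
so it suffices to show that $\mathcal{C}$ has compact closure in $L^1$.

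Next I would check the three hypotheses of \cite[Corollary 4.27]{Br}. \emph{Boundedness}: since $0<\langle\xi\rangle^{-2s}\le 1$, Young's inequality gives $\|g\|_1\le\|h\ast u\|_1\le\|h\|_1\|u\|_1\le\|h\|_1$ for all $g\in\mathcal{C}$. \emph{Uniform smallness of the tails}: for $\rho>0$,
\[
\int_{|\xi|>\rho}|g(\xi)|\,d\xi\le\langle\rho\rangle^{-2s}\|h\ast u\|_1\le\langle\rho\rangle^{-2s}\|h\|_1,
\]
and the right-hand side tends to $0$ as $\rho\to\infty$ uniformly over $\mathcal{C}$, because $s>0$. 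This is precisely where the decay of $\langle\xi\rangle^{-2s}$ simplifies matters compared with Proposition \ref{procm}: no weighted integrability of $h$ is needed.

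The main step is the \emph{uniform $L^1$-equicontinuity}: $\sup_{g\in\mathcal{C}}\int_{\mathbb{R}^n}|g(\xi+\eta)-g(\xi)|\,d\xi\to 0$ as $|\eta|\to 0$. For $g=\langle\xi\rangle^{-2s}\,h\ast u$ I would split
\[
g(\xi+\eta)-g(\xi)=\big(\langle\xi+\eta\rangle^{-2s}-\langle\xi\rangle^{-2s}\big)(h\ast u)(\xi+\eta)+\langle\xi\rangle^{-2s}\big((h\ast u)(\xi+\eta)-(h\ast u)(\xi)\big).
\]
The $L^1$-norm of the first term is at most $2s\,|\eta|\,\|h\ast u\|_1\le 2s\,|\eta|\,\|h\|_1$, using that $\xi\mapsto\langle\xi\rangle^{-2s}=(1+|\xi|^2)^{-s}$ is globally Lipschitz with constant at most $2s$. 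For the second term, using $\langle\xi\rangle^{-2s}\le 1$, the identity $(h\ast u)(\cdot+\eta)-h\ast u=\big(h(\cdot+\eta)-h\big)\ast u$, and Young's inequality, its $L^1$-norm is at most $\|h(\cdot+\eta)-h\|_1\,\|u\|_1\le\|h(\cdot+\eta)-h\|_1$, which tends to $0$ as $|\eta|\to 0$ by continuity of translation in $L^1$ (recall $h\in L^1$). Both bounds are uniform in $u$ with $\|u\|_1\le 1$. With the three properties in hand, \cite[Corollary 4.27]{Br} yields that $\mathcal{C}$ has compact closure in $L^1$, completing the proof.

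The only delicate point is the equicontinuity step, and within it the contribution of the variation of the weight $\langle\xi\rangle^{-2s}$; this is handled cleanly by its Lipschitz bound. Note in particular that, in contrast with the proof of Proposition \ref{procm}, there is no need to approximate $h$ by a function of $C_0^\infty$: invoking continuity of translation in $L^1$ directly for $h$ and combining it with Young's inequality already gives the required uniform estimate.
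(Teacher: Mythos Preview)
Your proof is correct and follows the same overall strategy the paper indicates (reduce to relative compactness in $L^1$ via the Fr\'echet--Kolmogorov criterion, as in the proof of Proposition~\ref{procm}), but your execution of the equicontinuity step is genuinely more economical. The paper's template from Proposition~\ref{procm} first localizes to a ball $\{|\xi|\le\rho\}$, handles the weight variation by uniform continuity on that compact set, and then treats the convolution shift by approximating $h$ by a $C_0^\infty$ function. You bypass both devices: the global Lipschitz bound $|\nabla\langle\xi\rangle^{-2s}|\le 2s$ (valid because the weight now \emph{decays}) controls the weight term directly, and the identity $(h\ast u)(\cdot+\eta)-h\ast u=(h(\cdot+\eta)-h)\ast u$ together with Young's inequality reduces the convolution term to $\|h(\cdot+\eta)-h\|_1$, which tends to zero by continuity of translation in $L^1$ without any approximation. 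This is cleaner and makes transparent exactly where $s>0$ and $h=\hat m\in L^1$ enter; the paper's longer route would also work but carries unnecessary baggage from the growing-weight situation of Proposition~\ref{procm}.
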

It is worth noting that Proposition \ref{procm2} is already contained in \cite[Proposition 3.5]{CLLZ}.

\subsection{Variants of spectral Barron spaces} 
Recalling the definition of spectral Barron spaces and the associated norm as defined in \eqref{eq_SBarronSpace} and \eqref{eq_SBarronSpacenorm}, we present an extended discussion of alternative variants of spectral Barron spaces appearing in other references. 

\subsubsection{Spectral Barron spaces in $L^p$.}  
Let $ 1 \le p \le 2 $ and let $ 2 \le p' \le \infty $ denote its Hölder conjugate, i.e., $ \frac{1}{p} + \frac{1}{p'} = 1 $. By the Hausdorff-Young theorem (e.g., \cite[Theorem 8.10]{Ch}), the Fourier transform $ \mathscr{F} $ maps $ L^p $ continuously to $ L^{p'} $ with  
\[
\|\hat{f}\|_{p'} \le (2\pi)^{n/p'} \|f\|_p, \quad f \in L^p.
\]  
For $ s \in \mathbb{R} $, define the spectral Barron space $ \mathcal{B}^{p,s} $ as  
\[
\mathcal{B}^{p,s} := \left\{ f \in L^p ; \; |\xi|^s \hat{f} \in L^1 \right\}.
\]  
Equipped with the norm
\begin{equation}\label{inn}
\|f\|_{\mathcal{B}^{p,s}} := \|f\|_p + \big\| |\xi|^s \hat{f} \big\|_1,
\end{equation}  
$ \mathcal{B}^{p,s} $ is a Banach space. We refer to \cite{MM} for the main properties of these spectral Barron spaces and their relationships with other Sobolev spaces.  

There exists a relationship between $ \mathcal{B}^{p,s} $ and $ B^s $ when $ s \ge 0 $. Specifically, we have  
\[
\mathcal{B}^{p,s} = L^p \cap B^s.  
\]  

Note that the norm \eqref{inn} is precisely the natural norm on $ L^p \cap B^s $. Moreover, we observe that $ L^p \cap B^s $ is isometrically isomorphic to the closed subspace  
\[
D := \left\{ (f, f) ; \; f \in L^p \cap B^s \right\}  
\]  
of $ L^p \times B^s $, equipped with the norm \eqref{inn}.  

In light of \cite[Subsection 4.8]{Ru}, the dual space $ (\mathcal{B}^{p,s})' $ can be identified with  
\[
\left( L^{p'} \times \tilde{B}^{-s} \right) / D^\perp,  
\]  
where  
\[
D^\perp = \left\{ (g, h) \in L^{p'} \times \tilde{B}^{-s} ; \; \langle (g, h), (f, f) \rangle = 0, \; f \in L^p \cap B^s \right\},  
\]  
and the pairing is defined as  
\[
\langle (g, h), (f, f) \rangle := \int_{\mathbb{R}^n} g(x) f(x) \, dx + \int_{\mathbb{R}^n} \left[ \langle \xi \rangle^{-s} \mathscr{F}^{-1} h(\xi) \right] \left[ \langle \xi \rangle^s \hat{f}(\xi) \right] \, d\xi.  
\]

\subsubsection{Spectral Barron Spaces associated with temperate weights.}  
Following \cite[Definition 10.1.1]{Ho}, a positive function $ K $ defined on $ \mathbb{R}^n $ is called a temperate weight function if  
\[
K(\xi + \eta) \le (1 + \mathbf{c} |\xi|)^m K(\eta), \quad \xi, \eta \in \mathbb{R}^n,  
\]  
where $ \mathbf{c} > 0 $ and $ m > 0 $ are constants. A typical example of a temperate weight function is $ K(\xi) = \langle \xi \rangle^s $ with $s \in \mathbb{R} $.  

Let $ K $ be a temperate weight function and $ 1 \le p \le \infty $. Define the space  
\[
B^{p, K} := \left\{ f \in \mathscr{S}' ; \; K \hat{f} \in L^p \right\}.  
\]  
This space is equipped with its natural norm  
\[
\|f\|_{p, K} := (2\pi)^{-n} \|K \hat{f}\|_p, \quad f \in B^{p, K}.  
\]  
This space has been introduced to study partial differential equations with constant coefficients. We refer to \cite[Chapter X]{Ho} for further details.  

Now, let $ K(\xi) = \langle \xi \rangle^s $ with $ s \in \mathbb{R} $, and replace the definition of $ B^{p, K} $ by the following:  
\[
B^{p, s} := \left\{ f \in \mathscr{S}' ; \; \langle \xi \rangle^s \hat{f} \in L^p \right\}, \quad s \in \mathbb{R}, \; 1 \le p \le \infty.  
\]  
Additionally, define  
\[
\tilde{B}^{p, s} := \left\{ f \in \mathscr{S}' ; \; \langle \xi \rangle^s \mathscr{F}^{-1} f \in L^p \right\}, \quad s \in \mathbb{R}, \; 1 \le p \le \infty.  
\]  
We proceed similarly to the case of $ B^s $ to show that the dual space of $ B^{p, s} $, where $s\ge 0$, can be identified with $ \tilde{B}^{p', -s} $, where $ p' $ denotes the Hölder conjugate exponent of $ p $ (i.e., $ \frac{1}{p} + \frac{1}{p'} = 1 $). The duality pairing between these two spaces, denoted hereafter by $ (\cdot, \cdot) $, is given by the formula  
\[
(f, g) = \int_{\mathbb{R}^n} \left[ \langle \xi \rangle^{-s} \mathscr{F}^{-1} f(\xi) \right] \left[ \langle \xi \rangle^s \hat{g}(\xi) \right] \, d\xi, \quad f \in \tilde{B}^{p', -s}, \; g \in B^{p, s}.  
\]  

\subsection{The Radon transform on $B^s$}

In this subsection, we discuss the Radon transform on $B^s$, which allows the spectral Barron space to also be characterized via a Radon measure. Furthermore, we investigate the isomorphism between the spectral Barron space and another function space.

For $\omega \in \mathbb{S}^{n-1}$ and $s \in \mathbb{R}$, the hyperplane $\{x\in \mathbb{R}^n;\; x\cdot \omega=s\}$ will be denoted henceforth by $[x \cdot \omega = s]$. The Radon transform of a function $f \in \mathscr{S}$ is defined as
\[
Rf(s, \omega) = \int_{[x \cdot \omega = s]} f(x) \, d\mu(x), \quad (s, \omega) \in \mathbb{R} \times \mathbb{S}^{n-1},
\]
where $d\mu$ denotes the Euclidean surface measure on the hyperplane $[x \cdot \omega = s]$.

For later use, we note that $Rf$ satisfies the following symmetry property:
\[
Rf(-s, -\omega) = Rf(s, \omega), \quad (s, \omega) \in \mathbb{R} \times \mathbb{S}^{n-1}.
\]
Recall that the Schwartz space $\mathscr{S}(\mathbb{R} \times \mathbb{S}^{n-1})$ consists of all $C^\infty$ functions $g$ on $\mathbb{R} \times \mathbb{S}^{n-1}$ such that, for all non-negative integers $k, \ell$ and for every differential operator $D$ on $\mathbb{S}^{n-1}$, the following estimate holds:
\[
\sup_{(t, \omega) \in \mathbb{R} \times \mathbb{S}^{n-1}} \left| (1 + |t|^k) \partial_t^\ell (Dg)(t, \omega) \right| < \infty.
\]

For $g \in \mathscr{S}(\mathbb{R} \times \mathbb{S}^{n-1})$, we denote by $\tilde{g}$ the Fourier transform of $g$ with respect to its first variable. That is,
\[
\tilde{g}(t, \omega) := \widehat{g(\cdot, \omega)}(t), \quad (t, \omega) \in \mathbb{R} \times \mathbb{S}^{n-1}. 
\]
We define the weighted $L^1$ space $L^1_s := L^1\left(\mathbb{R} \times \mathbb{S}^{n-1}, \frac{1}{2}|t|^{n-1} \langle t\omega \rangle^s \, dt \, d\omega\right)$ and equip it with its natural norm
\[
\|\cdot\|_{L^1_s} := \|\cdot\|_{L^1\left(\mathbb{R} \times \mathbb{S}^{n-1}, \frac{1}{2}|t|^{n-1} \langle t\omega \rangle^s \, dt \, d\omega\right)}.
\]

We now define the space
\[
\mathcal{L}^s := \left\{ g \in C_b^0(\mathbb{R}, L^1(\mathbb{S}^{n-1})) ; \; \tilde{g} \in L^1_s \right\},
\]
equipped with the norm
\[
\|g\|_{\mathcal{L}^s} := \|\tilde{g}\|_{L^1_s}.
\]
As we have previously verified for a similar space $B^s$, we confirm that $\mathcal{L}^s$ is a Banach space with respect to the norm $\|\cdot\|_{\mathcal{L}^s}$.

Let us denote the subspace of functions $g \in \mathscr{S}(\mathbb{R} \times \mathbb{S}^{n - 1})$ that satisfy the symmetry condition $g(-t, -\omega) = g(t, \omega)$ by $\tilde{\mathscr{S}}(\mathbb{R} \times \mathbb{S}^{n - 1})$. We then define $\mathscr{S}_{\mathrm{ho}}(\mathbb{R} \times \mathbb{S}^{n - 1})$ as the subspace of functions $g \in \tilde{\mathscr{S}}(\mathbb{R} \times \mathbb{S}^{n - 1})$ with the following property: for all integers $k \geq 0$, the integral $\int_{\mathbb{R}} g(t, \omega) t^k dt$ is a homogeneous polynomial in the components $\omega_1,\ldots,\omega_n$ of $\omega$ of degree $k$.
According to \cite[Theorem 2.4]{He}, the operator $R$ is a one-to-one linear mapping from the space $\mathscr{S}$ onto the space $\mathscr{S}_{\mathrm{ho}}(\mathbb{R} \times \mathbb{S}^{n - 1})$. 

Let $\mathcal{K}^s$ be defined as the closure of the subspace $\mathscr{S}_{\mathrm{ho}}(\mathbb{R} \times \mathbb{S}^{n - 1})$ within the space $\mathcal{L}^s$. We have the following result. 

\begin{proposition}\label{prora1}
The Radon transform $R$ can be extended to an isometric isomorphism from $B^s$ onto $\mathcal{K}^s$.
\end{proposition}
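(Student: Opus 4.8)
The plan is to exploit the classical projection-slice (Fourier-slice) identity, which relates the one-dimensional Fourier transform of $Rf$ to the $n$-dimensional Fourier transform of $f$. Concretely, for $f\in\mathscr{S}$ one has $\widetilde{Rf}(t,\omega)=\widehat{f}(t\omega)$ for $(t,\omega)\in\mathbb{R}\times\mathbb{S}^{n-1}$. I would first establish this identity on $\mathscr{S}$ (it is standard: integrate the definition of $Rf$ against $e^{-ist}$ in $s$ and use Fubini to recognise the integral as $\widehat f$ evaluated at $t\omega$), and then use it to compute the $\mathcal{L}^s$-norm of $Rf$. Passing to polar coordinates $\xi=t\omega$ with $t\in\mathbb{R}$, $\omega\in\mathbb{S}^{n-1}$ and Jacobian $\frac12|t|^{n-1}\,dt\,d\omega$ (the factor $\frac12$ accounting for the two-to-one parametrisation because $t$ ranges over all of $\mathbb{R}$, matched by the symmetry $Rf(-s,-\omega)=Rf(s,\omega)$), one gets
\[
\|Rf\|_{\mathcal{L}^s}=\|\widetilde{Rf}\|_{L^1_s}=\int_{\mathbb{R}\times\mathbb{S}^{n-1}}\frac12|t|^{n-1}\langle t\omega\rangle^s\,|\widehat f(t\omega)|\,dt\,d\omega=\int_{\mathbb{R}^n}\langle\xi\rangle^s|\widehat f(\xi)|\,d\xi=\|f\|_{B^s}.
\]
So $R:\mathscr{S}\to\mathcal{L}^s$ is an isometry onto its image; moreover by \cite[Theorem 2.4]{He} that image is exactly $\mathscr{S}_{\mathrm{ho}}(\mathbb{R}\times\mathbb{S}^{n-1})$.

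The second step is to pass from the dense subspace to the whole space. Since $\mathscr{S}$ is dense in $B^s$ (Proposition \ref{pro2}(ii)) and $R$ is a linear isometry from $\mathscr{S}$ onto the dense subspace $\mathscr{S}_{\mathrm{ho}}(\mathbb{R}\times\mathbb{S}^{n-1})$ of $\mathcal{K}^s$, it extends uniquely to an isometry $\overline{R}:B^s\to\mathcal{K}^s$; the image is closed (being isometric to the complete space $B^s$) and contains the dense subspace $\mathscr{S}_{\mathrm{ho}}$, hence equals $\mathcal{K}^s$. Thus $\overline{R}$ is an isometric isomorphism. I would also verify that this extension is consistent with the a priori meaning of $Rf$ for $f\in B^s$: for $f\in B^s$, $\widehat f\in L^1(\mathbb{R}^n,d\mu_s)$, so one can \emph{define} $\widetilde{Rf}(t,\omega):=\widehat f(t\omega)$, and then recover $Rf$ as the (partial) inverse Fourier transform in the first variable, which lies in $C_b^0(\mathbb{R},L^1(\mathbb{S}^{n-1}))$ by an argument parallel to Lemma \ref{lemB1}; one checks this agrees with $\overline{R}f$ by density.

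The main obstacle I anticipate is the bookkeeping around the polar-coordinate change of variables and the factor $\frac12$: one must be careful that the weight $\frac12|t|^{n-1}\langle t\omega\rangle^s\,dt\,d\omega$ on $\mathbb{R}\times\mathbb{S}^{n-1}$ (with $t$ signed) pulls back exactly to Lebesgue measure $\langle\xi\rangle^s\,d\xi$ on $\mathbb{R}^n$, and that the symmetry relation $\widehat f(t\omega)=\widehat f((-t)(-\omega))$ is precisely what makes the map well-defined on the quotient-like structure implicit in $\mathcal{L}^s$. A secondary point requiring care is justifying the projection-slice identity and the definition of $\widetilde{Rf}$ for merely $B^s$ (rather than Schwartz) data, where $Rf(\cdot,\omega)$ need not be Schwartz in $s$; here the cleanest route is to do everything first on $\mathscr{S}$, obtain the isometry, and then let abstract density and completeness do the rest, invoking the continuity of the partial Fourier transform on tempered distributions to identify limits. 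No genuinely hard estimate is needed beyond what is already in the paper.
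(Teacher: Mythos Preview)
Your proposal is correct and follows essentially the same route as the paper: establish the Fourier-slice identity $\widetilde{Rf}(t,\omega)=\hat f(t\omega)$ on $\mathscr{S}$, use polar coordinates (with the $\tfrac12|t|^{n-1}$ Jacobian and the $(t,\omega)\mapsto(-t,-\omega)$ symmetry) to get $\|Rf\|_{\mathcal{L}^s}=\|f\|_{B^s}$, and then extend by density using that $R:\mathscr{S}\to\mathscr{S}_{\mathrm{ho}}(\mathbb{R}\times\mathbb{S}^{n-1})$ is a bijection and $\mathcal{K}^s$ is the closure of $\mathscr{S}_{\mathrm{ho}}$. The only cosmetic difference is that the paper splits the polar integral into $t>0$ and $t<0$ before combining, whereas you invoke the two-to-one parametrisation directly; your additional remarks on consistency of the extension are sound but not needed for the statement as formulated.
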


\begin{proof}
Let $f\in\mathscr{S}$. According to \cite{He}, we have the formula
\begin{equation}\label{ra1}
\hat{f}(t\omega)=\widetilde{R f}(t,\omega),\; \quad (t,\omega)\in \mathbb{R}\times \mathbb{S}^{n-1}.
\end{equation}

For $s\geq0$, we know that
\begin{equation}\label{ra2}
\int_{\mathbb{R}^n}\langle \xi\rangle ^s |\hat{f}(\xi)|d\xi=\int_0^{+\infty}\int_{\mathbb{S}^{n-1}}t^{n-1} \langle t\omega \rangle^s |\hat{f}(t\omega)|dt d\omega .
\end{equation}

By making the change of variables $(t,\omega)\to(-t,-\omega)$ in the right-hand integral of \eqref{ra2}, we get
\begin{equation}\label{ra3}
\int_{\mathbb{R}^n}\langle \xi\rangle ^s |\hat{f}(\xi)|d\xi=\int_{-\infty}^0\int_{\mathbb{S}^{n-1}}(-t)^{n-1} \langle t\omega \rangle^s |\hat{f}(t\omega)|dt d\omega .
\end{equation}

Combining \eqref{ra2} and \eqref{ra3}, we obtain
\[
\int_{\mathbb{R}^n}\langle \xi\rangle ^s |\hat{f}(\xi)|d\xi=\frac{1}{2}\int_{\mathbb{R}}\int_{\mathbb{S}^{n-1}}|t|^{n-1} \langle t\omega \rangle^s |\hat{f}(t\omega)|dt d\omega .
\]
Using \eqref{ra1}, we can rewrite the above identity as
\[
\int_{\mathbb{R}^n}\langle \xi\rangle ^s |\hat{f}(\xi)|d\xi=\frac{1}{2}\int_{\mathbb{R}}\int_{\mathbb{S}^{n-1}}|t|^{n-1} \langle t\omega \rangle^s |\widetilde{Rf}(t,\omega)|dt d\omega .
\]
In other words, we have shown that
\begin{equation}\label{ra4}
\|f\|_{B^s}=\|Rf \|_{\mathcal{L}^s}, \quad f\in  \mathscr{S}.
\end{equation}

Now, take $g\in B^s$ and let $(g_j)$ be a sequence in $\mathscr{S}$ that converges to $g$ in $B^s$. By \eqref{ra4}, $(Rg_j)$ is a Cauchy sequence in $\mathcal{L}^s$. Since $\mathcal{L}^s$ is complete, there exists a limit $Rg\in\mathcal{L}^s$ of the sequence $(Rg_j)$. We can verify that this limit is independent of the sequence $(g_j)$ converging to $g$ in $B^s$. Taking $f=g_j$ in \eqref{ra4} and passing to limit as $j$ goes to $\infty$, we conclude that $R:B^s \rightarrow \mathcal{L}^s$ is an isometry. Since $R$ is a one-to-one linear mapping from $\mathscr{S}$ onto $\mathscr{S}_{\mathrm{ho}}(\mathbb{R}\times\mathbb{S}^{n - 1})$ and $\mathcal{K}^s$ is the closure of $\mathscr{S}_{\mathrm{ho}}(\mathbb{R}\times\mathbb{S}^{n - 1})$ in $\mathcal{L}^s$, we conclude that $R:B^s\to\mathcal{K}^s$ is onto. The proof is complete.
\end{proof}

For extended discussion on the connections between the Radon transform and spectral Barron spaces, we refer to \cite{KH}, which provides further results on isomorphisms of the Radon transform in relation to general Bessel-potential spaces and total variation norms. Additionally, \cite{LuM} addresses the inverse problem associated with the Radon transform, where a linking condition is established between the spectral Barron space and the sinogram spaces, thereby characterizing the range of the Radon transform.

\section{Schr\"odinger equations in spectral Barron spaces on $\mathbb{R}^n$} 
In this section, we present a comprehensive discussion of Schr\"odinger equations in spectral Barron spaces on $\mathbb{R}^n$, encompassing regularity results for both the standard Laplacian operator and anisotropic elliptic operators.

In the following, $\|\cdot\|_{\mathscr{B}(E)}$ will denote the operator norm on $\mathscr{B}(E)$, the space of bounded linear operators from the Banach space $E$ to itself.

\subsection{The Laplacian operator in $B^0$}

Define the unbounded operator $ A: B^0 \rightarrow B^0 $ by
\[
Au := -\Delta u, \quad u \in D(A) := \{ u \in B^0; \; \Delta u \in B^0 \}.
\]  
It is standard to equip $D(A)$ with its natural norm  
\[
\| u \|_{D(A)} := \| u \|_{B^0} + \| Au \|_{B^0}, \quad u \in D(A).
\]  
Equivalently, in terms of the Fourier transform, we have
\[
\| u \|_{D(A)} = \| \langle \xi \rangle^2 \hat{u} \|_1, \quad u \in D(A).  
\]  
This implies that $ D(A) = B^2 $ and $ \| \cdot \|_{D(A)} = \| \cdot \|_{B^2} $.  

Recall that the resolvent set of $ A $ is defined as
\[
\rho(A) := \{ z \in \mathbb{C} ; \; z - A : B^0 \rightarrow B^0 \text{ is an isomorphism} \}.  
\]  
The following properties are important for our subsequent analysis.

\begin{proposition}\label{pro1}
The operator $ A $ satisfies the following properties:
\\
$\mathrm{(i)}$ The domain $ D(A) $ is dense in $ B^0 $. 
\\
$\mathrm{(ii)}$ The operator $ A $ is closed.  
\\
$\mathrm{(iii)}$ The interval $ (-\infty, 0) $ is contained in the resolvent set $ \rho(A) $, and  
\begin{equation}\label{0}
\sup_{t > 0} \| t(t + A)^{-1} \|_{\mathscr{B}(B^0)} \leq 1.  
\end{equation}
\end{proposition}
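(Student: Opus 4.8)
The plan is to carry out everything on the Fourier side, using the two facts already established above: the map $f\mapsto\hat f$ is an isometry from $B^0$ onto $L^1$ (so $\|f\|_{B^0}=\|\hat f\|_1$), and $D(A)=B^2$ with $\|u\|_{D(A)}=\|\langle\xi\rangle^2\hat u\|_1$. Under this translation the operator $A+I$ is precisely the multiplier operator $(1-\Delta)^1=\mathscr F^{-1}\big(\langle\xi\rangle^2\widehat{\cdot}\,\big)$, and for $t>0$ the operator $t+A$ is the multiplier with symbol $t+|\xi|^2$; all three assertions then reduce to elementary scalar estimates for $\langle\xi\rangle^2=1+|\xi|^2$ and $t+|\xi|^2$.

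For (i), I would simply note that $\mathscr S\subset B^2=D(A)$ and invoke the density of $\mathscr S$ in $B^0$ (Lemma \ref{lem1.1}, equivalently Proposition \ref{pro2}(ii) with $s=0$); hence $D(A)$ is a fortiori dense in $B^0$. For (ii), the slick route is to recall from the discussion preceding \eqref{5} that $(1-\Delta)^1=A+I$ is an isometric isomorphism from $B^2=D(A)$ onto $B^0$, with inverse $K_1=(1-\Delta)^{-1}\in\mathscr B(B^0,B^2)\subset\mathscr B(B^0)$ bounded and everywhere defined; an operator possessing a bounded, everywhere-defined, two-sided inverse is closed, so $A+I$, and therefore $A$, is closed. (Alternatively, one argues directly: if $u_j\to u$ and $Au_j\to v$ in $B^0$, then $\hat u_j\to\hat u$ and $|\xi|^2\hat u_j\to\hat v$ in $L^1$; passing to an a.e.-convergent subsequence forces $\hat v=|\xi|^2\hat u$, so $\langle\xi\rangle^2\hat u=\hat u+\hat v\in L^1$, i.e. $u\in B^2=D(A)$ and $Au=v$.)

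For (iii), fix $t>0$ and consider the multiplier with symbol $(t+|\xi|^2)^{-1}$. Given $g\in B^0$, put $u:=\mathscr F^{-1}\big((t+|\xi|^2)^{-1}\hat g\big)$. Since
\[
\langle\xi\rangle^2(t+|\xi|^2)^{-1}=\frac{1+|\xi|^2}{t+|\xi|^2}\le\max\{1,t^{-1}\},
\]
we get $\langle\xi\rangle^2\hat u\in L^1$, so $u\in B^2=D(A)$, and $(t+A)u=\mathscr F^{-1}\big((t+|\xi|^2)\hat u\big)=g$; moreover $(t+A)w=0$ with $w\in D(A)$ forces $(t+|\xi|^2)\hat w=0$, hence $\hat w=0$ and $w=0$. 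Thus $t+A:D(A)\to B^0$ is a bijection whose inverse is the explicit multiplier operator above, which is bounded on $B^0$; consequently $-t\in\rho(A)$, and since $t>0$ is arbitrary, $(-\infty,0)\subset\rho(A)$. Finally, for every $g\in B^0$,
\[
\|t(t+A)^{-1}g\|_{B^0}=\int_{\mathbb R^n}\frac{t}{t+|\xi|^2}\,|\hat g(\xi)|\,d\xi\le\int_{\mathbb R^n}|\hat g(\xi)|\,d\xi=\|g\|_{B^0},
\]
since $0\le t/(t+|\xi|^2)\le1$; taking the supremum over the unit ball of $B^0$ and then over $t>0$ gives \eqref{0}.

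I do not anticipate a genuine obstacle here: every step is a one-line estimate for scalar symbols transported through the isometry $\|\cdot\|_{B^0}=\|\widehat{\cdot}\|_1$. The only points deserving a little care are confirming that the resolvent multiplier actually lands in $D(A)=B^2$ (so that $t+A$ is honestly onto $B^0$, not merely onto a dense subspace), and reading "$z-A$ is an isomorphism" in the definition of $\rho(A)$ as bijectivity of the unbounded operator together with boundedness of the inverse on $B^0$ — both of which the explicit Fourier-multiplier formulas make completely transparent.
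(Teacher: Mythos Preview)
Your proof is correct and follows essentially the same Fourier-multiplier approach as the paper. The only minor difference is in (ii): your primary argument leverages the already-established fact that $(1-\Delta):B^2\to B^0$ is an isometric isomorphism (hence has bounded inverse, hence is closed), whereas the paper verifies closedness directly via sequences---your parenthetical alternative is precisely that direct check, with the limit identified through an a.e.-convergent subsequence rather than the paper's $\mathscr{S}'$-convergence argument.
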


\begin{proof}

(i) Since $ \mathscr{S} \subset D(A) $ and $ \mathscr{S} $ is dense in $ B^0 $ by Lemma \ref{lem1.1}, it follows that $ D(A) $ is dense in $ B^0 $.  

(ii) Let $ (u_j) $ be a sequence in $ D(A) $ such that $ u_j \to u $ in $ B^0 $ and $ Au_j \to v $ in $ B^0 $. Then, in the Fourier domain: $ \hat{u}_j \to \hat{u} $ in $ L^1 $ and $ |\xi|^2 \hat{u}_j \to \hat{v} $ in $ L^1 $.  

Since $ |\xi|^2 \hat{u}_j \to |\xi|^2 \hat{u} $ in $ \mathscr{S}' $, it follows that $ |\xi|^2 \hat{u} = \hat{v} $. Thus, $ u \in D(A) $ and $ Au = v $, proving that $ A $ is closed.  

(iii) Let $ t < 0 $, $ f \in B^0 $, and consider the equation
\[
(t - A)u = f.  
\]  
If this equation admits a solution $ u \in D(A) $, then in the Fourier domain, the solution must satisfy
\[
(t - |\xi|^2)\hat{u} = \hat{f}.  
\]  
Thus, the unique solution $ u \in D(A) $ is given by
\[
u = \mathscr{F}^{-1}\left( (t - |\xi|^2)^{-1} \hat{f} \right).  
\]  
This implies that $ (-\infty, 0) \subset \rho(A) $. For $ t < 0 $, the resolvent operator satisfies
\[
\| (t - A)^{-1} f \|_{B^0} = \left\| (t - |\xi|^2)^{-1} \hat{f} \right\|_1.  
\]  
On the other hand, for $ t > 0 $, we have
\[
\| t(t + A)^{-1} f \|_{B^0} = \left\| t(t + |\xi|^2)^{-1} \hat{f} \right\|_1 \leq \| \hat{f} \|_1 = \| f \|_{B^0}.  
\]  

This completes the proof. 
\end{proof}

In light of Proposition \ref{pro1}, we apply the Hille-Yosida theorem (e.g., \cite[Theorem 3.1]{Pa}) to deduce that $ -A $ generates a strongly continuous semigroup of contractions $ (\mathbb{T}(t))_{t \geq 0} $. Furthermore, we have the following representation formula for the resolvent:  
\[
(\lambda + A)^{-1} = \int_0^\infty e^{-\lambda t} \mathbb{T}(t) \, dt, \quad \lambda > 0.  
\]  

We now recall the definition of a sectorial operator. For $ \omega \in (0, \pi) $, we define the sector $ S_\omega $ as
\[
S_\omega := \{ z \in \mathbb{C} \setminus \{0\} ;\; |\arg z| < \omega \}.  
\]  
An operator $ \mathbf{A} $ on a Banach space $ E $ is called sectorial of angle $ \omega $ if
there holds $ \sigma(\mathbf{A}):=\mathbb{C}\setminus \rho(A) \subset \overline{S}_\omega $ (the closure of $ S_\omega $), and 
\[
\sup_{\lambda \in \mathbb{C} \setminus \overline{S}_\omega} \| \lambda (\lambda - \mathbf{A})^{-1} \|_{\mathscr{B}(E)} < \infty.  
\]  

A combination of Proposition \ref{pro1} and \cite[Proposition 2.1.1]{Ha} yields the following result.

\begin{corollary}\label{cor1}  
The operator $ A $ is sectorial of angle $ \pi/2 $.  
\end{corollary}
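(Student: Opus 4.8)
The plan is to verify the two conditions in the definition of a sectorial operator of angle $\pi/2$ for $A$, namely that $\sigma(A)\subset\overline{S}_{\pi/2}$ and that $\lambda(\lambda-A)^{-1}$ is uniformly bounded on $\mathbb{C}\setminus\overline{S}_{\pi/2}$, and then to invoke \cite[Proposition 2.1.1]{Ha}. The starting point is Proposition \ref{pro1}: parts (i) and (ii) show $A$ is densely defined and closed, and part (iii) already gives $(-\infty,0)\subset\rho(A)$ together with the resolvent bound \eqref{0}. Since the closed left half-line $(-\infty,0]$ lies outside $\overline{S}_{\pi/2}=\{z:\operatorname{Re}z\geq 0\}\cup\{0\}$, we first record that $A$ has a nonempty resolvent set, so \cite[Proposition 2.1.1]{Ha} (which characterizes sectoriality of angle $\pi/2$ for operators with nonempty resolvent set via a uniform bound of the type \eqref{0} along the negative real axis, or equivalently via generation of a bounded analytic semigroup) is applicable.

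Concretely I would argue as follows. First, describe the spectrum directly in the Fourier picture, exactly as in the proof of Proposition \ref{pro1}(iii): for $z\in\mathbb{C}$ the operator $z-A$ corresponds under $\mathscr{F}$ to multiplication by $z-|\xi|^2$, so $z\in\rho(A)$ if and only if the symbol $z-|\xi|^2$ is bounded away from $0$ and $(z-|\xi|^2)^{-1}$ is a Fourier multiplier on $B^0$; since $|\xi|^2$ ranges over $[0,\infty)$, this holds precisely when $z\notin[0,\infty)$. Hence $\sigma(A)=[0,\infty)\subset\overline{S}_{\pi/2}$, establishing the first condition. Second, for the resolvent estimate, take $\lambda\in\mathbb{C}\setminus\overline{S}_{\pi/2}$, i.e. $\operatorname{Re}\lambda<0$ (or $\lambda=0$, excluded), and compute as in Proposition \ref{pro1}:
\[
\|\lambda(\lambda-A)^{-1}f\|_{B^0}=\left\|\frac{\lambda}{\lambda-|\xi|^2}\,\hat f\right\|_1\leq\Big(\sup_{r\geq0}\Big|\frac{\lambda}{\lambda-r}\Big|\Big)\|\hat f\|_1.
\]
The elementary estimate $\sup_{r\geq0}|\lambda/(\lambda-r)|\leq 1$ for $\operatorname{Re}\lambda\leq 0$ (the point $r$ on the nonnegative real axis closest in the relevant sense keeps $|\lambda-r|\geq|\lambda|$ whenever $\operatorname{Re}\lambda\leq 0$, since then $|\lambda-r|^2=|\lambda|^2-2r\operatorname{Re}\lambda+r^2\geq|\lambda|^2$) gives $\|\lambda(\lambda-A)^{-1}\|_{\mathscr{B}(B^0)}\leq 1$ uniformly on $\mathbb{C}\setminus\overline{S}_{\pi/2}$. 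This is the precise quantitative input needed, and it is essentially a restatement of \eqref{0} extended from the negative real axis to the open left half-plane.

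With both conditions in hand, \cite[Proposition 2.1.1]{Ha} applies and yields that $A$ is sectorial of angle $\pi/2$. I do not expect a serious obstacle here: the only mild subtlety is to make sure the hypotheses of the cited proposition from \cite{Ha} are matched exactly — in particular that it covers the borderline angle $\omega=\pi/2$ and only requires the resolvent bound along a half-line or, more strongly, on the complement of the sector — but since Proposition \ref{pro1} already packages $A$ as the negative generator of a contraction semigroup with the bound \eqref{0}, and the computation above upgrades \eqref{0} to the full half-plane, the invocation is routine. If one prefers not to rely on the half-line version of \cite[Proposition 2.1.1]{Ha}, the displayed computation above is self-contained and proves the full sector bound directly, so the corollary follows in any case.
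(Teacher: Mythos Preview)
Your proposal is correct and follows the same route as the paper, which simply records that Corollary \ref{cor1} follows by combining Proposition \ref{pro1} with \cite[Proposition 2.1.1]{Ha}. You additionally supply the explicit Fourier-multiplier computation extending the bound \eqref{0} from the negative real axis to the whole open left half-plane, which makes the argument self-contained even without looking up the precise statement in \cite{Ha}; this is a harmless and helpful elaboration rather than a different method.
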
  

As it was already observed in \cite{LuM}, since $ A $ is sectorial, it generates an analytic semigroup (e.g., \cite[Chapter 2]{Lun}), and the standard functional calculus applies to $ A $ (e.g., \cite{Ha}). Here, we briefly discuss the fractional powers of the operator $ (1 - \Delta, D(A)) $.  
Consider the operator $ \mathbb{A} := 1 - \Delta $ with domain $ D(\mathbb{A}) = D(A) $. By Proposition \ref{pro1}, $ \mathbb{A} $ is a closed and densely defined operator such that $ (-\infty, 0] \subset \rho(\mathbb{A}) $, and the following estimate holds:  
\[
\sup_{t \geq 0} (1 + t) \| (t + \mathbb{A})^{-1} \|_{\mathscr{B}(B^0)} \leq 1.  
\]  
Thus, $ \mathbb{A} $ is a positive operator in the sense of \cite[Definition 4.1]{Lu}.  

Define the regions
\[
\Lambda := \{ z \in \mathbb{C}; \; \Re z \leq 0, \; |\Im z| < |\Re z| + 1 \} \cup \{ z \in \mathbb{C}; \; |z| < 1 \}
\]  
and, for $ (\delta, \theta) \in (0, 1) \times (0, \pi/4) $,  
\[
\Lambda_{\delta, \theta} := \{ z \in \mathbb{C}; \; \Re z < 0, \; |\Im z| \leq (\tan \theta) |\Re z|, \; |z| \le \delta \}.
\]  
According to \cite[Lemma 4.2]{Lu}, $ \Lambda \subset \rho(\mathbb{A}) $, and for all $ (\delta, \theta) \in (0, 1) \times (0, \pi/4) $, there exists a constant $ M_{\delta, \theta} $ such that
\[
\sup_{z \in \Lambda_{\delta, \theta}} (1 + |z|) \| (z - \mathbb{A})^{-1} \|_{\mathscr{B}(B^0)} \leq M_{\delta, \theta}.  
\]  

Let $ 0 < \alpha < 1 $. By the formula \cite[(4.4)]{Lu}, we have
\[
\mathbb{A}^{-\alpha} f = \frac{\sin (\pi \alpha)}{\pi} \int_0^\infty \lambda^{-\alpha} (\lambda + \mathbb{A})^{-1} f \, d\lambda, \quad f \in B^0.  
\]  
Let $ f \in B^0 $ and define $ g(\lambda, \cdot) := (\lambda + \mathbb{A})^{-1} f $. Then, the Fourier transform of $ g(\lambda, \cdot) $ is given by
\[
\widehat{g(\lambda, \cdot)}(\xi) = (\lambda + \langle \xi \rangle^2)^{-1} \hat{f}(\xi).
\]   
Using the integral formula
\[
\int_0^\infty \lambda^{-\alpha} (\lambda + \langle \xi \rangle^2)^{-1} \, d\lambda = \frac{\pi}{\sin (\pi \alpha)} \langle \xi \rangle^{-2\alpha},  
\]  
we deduce that 
\[
\frac{\sin (\pi \alpha)}{\pi}\langle \xi \rangle^{2\alpha}\int_0^\infty \lambda^{-\alpha} \widehat{g(\lambda, \cdot)}(\xi) \, d\lambda=\hat{f}(\xi)
\]
and
$ \langle \xi \rangle^{2\alpha} \lambda^{-\alpha}\widehat{g(\lambda, \cdot)}(\xi) $ belongs to $ L^1((0, \infty) \times \mathbb{R}^n) $.  
In other words, we have shown that
\begin{equation}\label{fr1}
\langle\xi\rangle^{2\alpha}\mathscr{F}(\mathbb{A}^{-\alpha}f)=\hat{f}
\end{equation}
and $\| \mathbb{A}^{-\alpha} f \|_{B^{2\alpha}} = \| f \|_{B^0}$.

The unbounded operator $ \mathbb{A}^\alpha $ is defined as follows:  
\[
\mathbb{A}^\alpha f := \mathbb{A} \mathbb{A}^{-(1 - \alpha)} f, \quad f \in D(\mathbb{A}^\alpha) := \{ h \in B^0; \; \mathbb{A} \mathbb{A}^{-(1 - \alpha)} h \in B^0 \}.
\]  
Let $ f \in D(\mathbb{A}^\alpha) $. Since
\[
\langle \xi \rangle^2 \mathscr{F}(\mathbb{A}^{-(1 - \alpha)} f) = \langle \xi \rangle^{2\alpha} \left[ \langle \xi \rangle^{2(1 - \alpha)} \mathscr{F}(\mathbb{A}^{-(1 - \alpha)} f) \right]=\langle\xi\rangle^{2\alpha}\hat{f}
\]
by \eqref{fr1} and $ \langle \xi \rangle^2 \mathscr{F}(\mathbb{A}^{-(1 - \alpha)} f) \in L^1 $, it follows that $ f \in B^{2\alpha} $. Conversely, if $ f \in B^{2\alpha} $, we can adapt the preceding proof to show that $ \mathbb{A}^{-(1 - \alpha)} f \in B^2 $, and thus $ D(\mathbb{A}^{\alpha}) = B^{2\alpha} $.  
From its definition, $ \mathbb{A}^\alpha $ can be expressed as
\[
\mathbb{A}^\alpha f = \frac{\sin (\pi \alpha)}{\pi} \mathbb{A} \int_0^\infty \lambda^{-1 + \alpha} (\lambda + \mathbb{A})^{-1} f \, d\lambda, \quad f \in B^{2\alpha}.  
\]

Even though we are not in a Hilbertian setting, the results in this subsection show that $-\Delta$, as an unbounded operator on $B^0$, shares properties similar to those in the $L^2$ setting. We also note that \cite{LuM} has proven that $1-\Delta$ is weakly sectorial in $B^0$, which suffices to define its fractional powers.

\subsection{Schr\"odinger equations}

We study the Schr\"odinger equations within the framework of spectral Barron spaces defined over the entire domain $\mathbb{R}^n$. Specifically, we first analyze a special case involving a potential with small amplitude and conduct a detailed spectral analysis. Additionally, we will explore Schr\"odinger equations incorporating non-local terms.

\subsubsection{The case of small norm potential}

The Schr\"odinger equation considered in this subsection is given by
\begin{equation}\label{Sc}
(1 - \Delta + V)u = f,
\end{equation}
where both $ V, f \in B^0 $. It is important to note that we cannot replace $ 1 + V $ with a function belonging to $ B^0 $, as $ 1 \notin B^0 $, which we have previously observed in Remark \ref{remB0}. However, the constant $ 1 $ can be replaced by an arbitrary strictly positive constant.
We introduce the following set
\[
\mathscr{V}_0 := \left\{ V \in B^0; \; (2\pi)^{-n} \|V\|_{B^0} < 1 \right\}.
\]

The well-posedness and regularity results for the Schr\"odinger equation (\ref{Sc}) are presented below.
\begin{theorem}\label{thm1}
For all $ V \in \mathscr{V}_0 $ and $ f \in B^0 $, the Schrödinger equation \eqref{Sc} admits a unique solution $ \mathbf{u} \in B^2 $, and the following estimate holds:
\begin{equation}\label{00}
\|\mathbf{u}\|_{B^2} \leq (1 - \varkappa)^{-1} \|f\|_{B^0},
\end{equation}
where $ \varkappa := (2\pi)^{-n} \|V\|_{B^0} $.
\end{theorem}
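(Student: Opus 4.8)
The plan is to recast \eqref{Sc} as a fixed-point equation in $B^2$ and to invert the relevant operator via a Neumann series. Recall that by \eqref{5} (with $s=1$) the operator $1-\Delta:B^2\to B^0$ is an isometric isomorphism, with inverse $(1-\Delta)^{-1}=K_1\in\mathscr{B}(B^0,B^2)$ represented by \eqref{5.1}. The first step is to observe that, since $V\mathbf{u}\in B^0$ for every $\mathbf{u}\in B^2$ by Proposition \ref{pro2}(iv) together with $B^2\hookrightarrow B^0$, a function $\mathbf{u}\in B^2$ solves \eqref{Sc} if and only if
\[
\mathbf{u}+(1-\Delta)^{-1}(V\mathbf{u})=(1-\Delta)^{-1}f .
\]
Indeed, applying $1-\Delta$ to this identity returns $(1-\Delta)\mathbf{u}+V\mathbf{u}=f$, and conversely applying $(1-\Delta)^{-1}$ to \eqref{Sc}, which is legitimate because $V\mathbf{u}=f-(1-\Delta)\mathbf{u}\in B^0$, yields the displayed equation.

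The second step is to introduce the bounded linear operator $T:B^2\to B^2$ defined by $T\mathbf{u}:=(1-\Delta)^{-1}(V\mathbf{u})$ and to estimate its norm. Using that $(1-\Delta)^{-1}:B^0\to B^2$ is an isometry, the product estimate \eqref{prod} with $s=0$, and the embedding $B^2\hookrightarrow B^0$, one obtains
\[
\|T\mathbf{u}\|_{B^2}=\|V\mathbf{u}\|_{B^0}\le (2\pi)^{-n}\|V\|_{B^0}\,\|\mathbf{u}\|_{B^0}\le \varkappa\,\|\mathbf{u}\|_{B^2},\quad \mathbf{u}\in B^2,
\]
where $\varkappa=(2\pi)^{-n}\|V\|_{B^0}<1$ because $V\in\mathscr{V}_0$. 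Hence $\|T\|_{\mathscr{B}(B^2)}\le\varkappa<1$, so $I+T$ is a boundedly invertible operator on $B^2$ with $\|(I+T)^{-1}\|_{\mathscr{B}(B^2)}\le(1-\varkappa)^{-1}$ (Neumann series).

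The third step is to conclude: \eqref{Sc} admits the unique solution $\mathbf{u}=(I+T)^{-1}\big((1-\Delta)^{-1}f\big)\in B^2$, and, using once more that $(1-\Delta)^{-1}$ is an isometry from $B^0$ onto $B^2$,
\[
\|\mathbf{u}\|_{B^2}\le(1-\varkappa)^{-1}\,\|(1-\Delta)^{-1}f\|_{B^2}=(1-\varkappa)^{-1}\|f\|_{B^0},
\]
which is \eqref{00}. This is essentially a contraction argument, so no serious obstacle is anticipated; the two points deserving care are verifying that $V\mathbf{u}$ genuinely belongs to $B^0$ (so that all the operator manipulations are licit), which is exactly the content of Proposition \ref{pro2}(iv), and checking the equivalence between the PDE \eqref{Sc} and the fixed-point equation by means of the isometric isomorphism $1-\Delta:B^2\to B^0$. (An equivalent variant is to set $v:=(1-\Delta)\mathbf{u}$ and solve $v+V(1-\Delta)^{-1}v=f$ directly in $B^0$, which leads to the same estimate.)
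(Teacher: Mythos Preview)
Your proof is correct and follows essentially the same contraction/Neumann-series approach as the paper. The only cosmetic difference is that the paper runs the contraction in $B^0$ (setting $Fu=(1+A)^{-1}(-Vu+f)$) and then bootstraps the estimate from $B^0$ to $B^2$ via the Fourier identity $\langle\xi\rangle^2\widehat{\mathbf u}=-\widehat{V\mathbf u}+\hat f$, whereas you work directly in $B^2$ using the isometry $(1-\Delta):B^2\to B^0$, which yields \eqref{00} in one step.
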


\begin{proof}
We rewrite the Schr\"odinger equation \eqref{Sc} in the form
\[
(1 - \Delta)u = -Vu + f.
\]
Given that $-1 \in \rho(A)$, the last equation yields
\[
u = (1 + A)^{-1}(-Vu + f) =: Fu,
\]
where $ F: B^0 \to B^0 $. From \eqref{0} and \eqref{prod}, it follows that for all $ u, v \in B^0 $
\[
\|Fu - Fv\|_{B^0} \leq (2\pi)^{-n} \|V\|_{B^0} \|u - v\|_{B^0} = \varkappa \|u - v\|_{B^0},
\]
where $ \varkappa := (2\pi)^{-n} \|V\|_{B^0} $. Since $ \varkappa < 1 $, $ F $ is a contraction mapping. By the Banach contraction principle, there exists a unique $ \mathbf{u} \in B^0 $ satisfying $ \mathbf{u} = F\mathbf{u} $. Consequently, $ \mathbf{u} \in D(A) = B^2 $ is the unique solution of \eqref{Sc}.

Furthermore, since
\[
\|\mathbf{u}\|_{B^0} \leq \|f\|_{B^0} + \|F\mathbf{u} - F0\|_{B^0},
\]
we obtain
\begin{equation}\label{est0}
\|\mathbf{u}\|_{B^0} \leq (1 - \varkappa)^{-1} \|f\|_{B^0}.
\end{equation}
On the other hand, using
\[
\langle \xi \rangle^2 \widehat{\mathbf{u}} = -\widehat{V\mathbf{u}} + \hat{f},
\]
and Proposition \ref{pro2} (iv), we derive
\[
\|\mathbf{u}\|_{B^2} \leq (2\pi)^{-n} \|V\|_{B^0} \|\mathbf{u}\|_{B^0} + \|f\|_{B^0}.
\]
Combining this with \eqref{est0} yields \eqref{00}.
\end{proof}

\begin{remark}\label{rem2}
{\rm
The above well-posedness and regularity theorem was established in \cite[Theorem 2.3]{CLLZ} under the assumptions \( V \in B^0 \) and \( V \ge -1 \), using a compactness argument based on the Fourier transform. In our approach, we apply the Banach contraction principle, which allows for an extension to more general cases to be discussed in the following (sub)section, including the anisotropic elliptic equation.
}
\end{remark}

Next, we prove the following higher regularity result.
\begin{theorem}\label{thm2}
Let $ k \ge 0 $ be an integer, and let $ V \in \mathscr{V}_0 \cap B^k $. There exists a constant $ \mathbf{c} = \mathbf{c}(n, k, V) > 0 $ such that for all $ f \in B^k $, the solution $ \mathbf{u} \in B^2 $ of \eqref{Sc} satisfies $ \mathbf{u} \in B^{k+2} $, and the following estimate holds:
\[
\|\mathbf{u}\|_{B^{k+2}} \leq \mathbf{c} \|f\|_{B^k}.
\]
\end{theorem}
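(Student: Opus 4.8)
The plan is to bootstrap regularity using the fixed-point equation $\mathbf{u} = (1+A)^{-1}(-V\mathbf{u} + f)$ together with the algebra property of $B^s$ from Proposition \ref{pro2}(iv), arguing by induction on $k$. The base case $k=0$ is exactly Theorem \ref{thm1}. Suppose the result holds for some integer $k \geq 0$, and let $V \in \mathscr{V}_0 \cap B^{k+1}$ and $f \in B^{k+1}$. Since $B^{k+1} \hookrightarrow B^k$, the inductive hypothesis already gives $\mathbf{u} \in B^{k+2}$ with $\|\mathbf{u}\|_{B^{k+2}} \leq \mathbf{c}(n,k,V)\|f\|_{B^k} \leq \mathbf{c}(n,k,V)\|f\|_{B^{k+1}}$; the task is to upgrade to $\mathbf{u} \in B^{k+3}$.

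First I would pass to the Fourier side. The identity $\langle\xi\rangle^2\widehat{\mathbf{u}} = -\widehat{V\mathbf{u}} + \hat{f}$ gives, after multiplying by $\langle\xi\rangle^{k+1}$,
\begin{equation}\label{eq_bootstrap}
\langle\xi\rangle^{k+3}\widehat{\mathbf{u}} = -\langle\xi\rangle^{k+1}\widehat{V\mathbf{u}} + \langle\xi\rangle^{k+1}\hat{f}.
\end{equation}
Taking $L^1$ norms in \eqref{eq_bootstrap} yields $\|\mathbf{u}\|_{B^{k+3}} \leq \|V\mathbf{u}\|_{B^{k+1}} + \|f\|_{B^{k+1}}$. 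Now $V \in B^{k+1}$ and $\mathbf{u} \in B^{k+2} \hookrightarrow B^{k+1}$, so by Proposition \ref{pro2}(iv),
\[
\|V\mathbf{u}\|_{B^{k+1}} \leq 2^{(k+1)/2}(2\pi)^{-n}\|V\|_{B^{k+1}}\|\mathbf{u}\|_{B^{k+1}} \leq 2^{(k+1)/2}(2\pi)^{-n}\|V\|_{B^{k+1}}\|\mathbf{u}\|_{B^{k+2}}.
\]
Combining this with the inductive bound on $\|\mathbf{u}\|_{B^{k+2}}$ gives $\|\mathbf{u}\|_{B^{k+3}} \leq \mathbf{c}(n,k+1,V)\|f\|_{B^{k+1}}$, closing the induction. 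Note that at no point do we need a new contraction argument: uniqueness and existence of $\mathbf{u} \in B^2$ come once and for all from Theorem \ref{thm1}, and every subsequent step is a direct estimate using that the product in $B^s$ is controlled by the $B^s$ norms of the factors, with no smallness required in the higher norms.

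I do not expect a genuine obstacle here; the argument is a standard elliptic bootstrap adapted to the scale $B^s$, and the only ingredient beyond elementary Fourier manipulations is the multiplicative inequality \eqref{prod}, which is already available. The one point requiring a little care is bookkeeping the constant: $\mathbf{c}$ is allowed to depend on $n$, $k$, and $V$ (through $\|V\|_{B^k}$), so the growing factors $2^{(k+1)/2}(2\pi)^{-n}\|V\|_{B^{k+1}}$ produced at each stage are harmless, and one simply absorbs them into the generic constant at the end. It is also worth remarking that one could alternatively run the induction in steps of size $2$ (arguing $k \mapsto k+2$) to exploit $D(A) = B^2$ more directly, but the unit-step version above is cleanest since it only uses the already-established identity $\langle\xi\rangle^2\widehat{\mathbf{u}} = -\widehat{V\mathbf{u}} + \hat f$.
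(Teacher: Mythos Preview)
Your proposal is correct and follows essentially the same approach as the paper: both argue by induction on $k$, multiply the Fourier identity $\langle\xi\rangle^2\widehat{\mathbf{u}} = -\widehat{V\mathbf{u}} + \hat{f}$ by $\langle\xi\rangle^{k+1}$, apply the product estimate \eqref{prod} to control $\|V\mathbf{u}\|_{B^{k+1}}$, and close the induction by bounding $\|\mathbf{u}\|_{B^{k+1}}\le\|\mathbf{u}\|_{B^{k+2}}$ via the inductive hypothesis. The paper even records the resulting recursive constant $\mathbf{c}(n,k+1,V)=2^{(k+1)/2}(2\pi)^{-n}\mathbf{c}(n,k,V)\|V\|_{B^{k+1}}+1$, matching your bookkeeping.
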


\begin{proof}
We prove Theorem \ref{thm2} by induction. The case $k=0$ follows from Theorem \ref{thm1} with $\mathbf{c}:=(1-\varkappa)^{-1}$. We assume that Theorem \ref{thm2} holds when $k\ge 0$. 

Using the Fourier representation
\[
\langle \xi \rangle^2 \widehat{\mathbf{u}} = -\widehat{V\mathbf{u}} + \hat{f},
\]
we obtain
\[
\langle \xi \rangle^{k+3} \widehat{\mathbf{u}} = -\langle\xi\rangle^{k+1}\widehat{V\mathbf{u}} + \langle\xi\rangle^{k+1}\hat{f}.
\]
Hence, we obtain from \eqref{prod}
\[
\|\mathbf{u}\|_{B^{k+3}} \le 2^{(k+1)/2} (2\pi)^{-n} \|V\|_{B^{k+1}} \|\mathbf{u}\|_{B^{k+1}} + \|f\|_{B^{k+1}}.
\]
From the assumption by induction, we have
\begin{align*}
\|\mathbf{u}\|_{B^{k+3}} &\le 2^{(k+1)/2} (2\pi)^{-n} \mathbf{c}\|V\|_{B^{k+1}} \|f\|_{B^k} + \|f\|_{B^{k+1}}
\\
&\le \left(2^{(k+1)/2} (2\pi)^{-n} \mathbf{c}\|V\|_{B^{k+1}} +1\right)\|f\|_{B^{k+1}},
\end{align*}
where $\mathbf{c}=\mathbf{c}(n,k,V)>0$ is a constant. By setting the generic constant as
\[
\mathbf{c}(n,k+1,V):=2^{(k+1)/2} (2\pi)^{-n} \mathbf{c}(n,k,V)\|V\|_{B^{k+1}} +1,
\]
we complete the proof.
\end{proof}

To establish the $ B^{s+2} $-regularity for solutions of \eqref{Sc} when $ s \in (0, \infty) \setminus \mathbb{N} $, it is necessary to solve the equation \eqref{Sc} directly. For this purpose, we introduce the following set
\[
\mathscr{V}_s := \left\{ V \in B^s ; \; 2^{s/2} (2\pi)^{-n} \|V\|_{B^s} < 1 \right\}.
\]
By adapting the methodology employed in the proof of Theorem \ref{thm1}, we derive the following result.

\begin{theorem}\label{thm2.b}
Let $ s \geq 0 $ be a real number and $ V \in \mathscr{V}_s $. For every $ f \in B^s $, the equation \eqref{Sc} admits a unique solution $ \mathbf{u} \in B^{s+2} $ satisfying the estimate
\[
\|\mathbf{u}\|_{B^{s+2}} \leq \mathbf{c} \|f\|_{B^s},
\]
where $\mathbf{c}:=(1-2^{s/2} (2\pi)^{-n} \|V\|_{B^s})^{-1}$ is a constant.
\end{theorem}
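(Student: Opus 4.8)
The plan is to mimic the fixed-point argument of Theorem \ref{thm1}, but to carry it out directly in $B^{s+2}$ rather than in $B^0$, so that the contraction constant involves the $B^s$-norm of $V$ together with the Peetre constant $2^{s/2}$ from \eqref{prod}. First I would rewrite \eqref{Sc} as $(1-\Delta)u = -Vu + f$ and, using that $(1-\Delta)^{-1} = K_1$ is an isometric isomorphism from $B^0$ onto $B^2$ (more precisely, an isometry from $B^s$ onto $B^{s+2}$ by \eqref{5} and \eqref{5.1}, or directly by the multiplier estimate in Remark \ref{rem1.0} with $a(\xi)=\langle\xi\rangle^{-2}$), recast the equation as the fixed-point problem
\[
u = (1-\Delta)^{-1}(-Vu + f) =: Gu, \qquad G : B^{s+2} \to B^{s+2}.
\]
Here one uses that $f \in B^s$ gives $(1-\Delta)^{-1}f \in B^{s+2}$, and that $Vu \in B^s$ whenever $V \in B^s$ and $u \in B^{s+2} \hookrightarrow B^s$, by Proposition \ref{pro2}(iv).

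Next I would estimate the Lipschitz constant of $G$ on $B^{s+2}$. For $u, v \in B^{s+2}$ we have $Gu - Gv = -(1-\Delta)^{-1}(V(u-v))$, and since $(1-\Delta)^{-1}$ maps $B^s$ isometrically into $B^{s+2}$ (equivalently, $\|(1-\Delta)^{-1} w\|_{B^{s+2}} = \|\langle\xi\rangle^{s+2}\langle\xi\rangle^{-2}\hat w\|_1 = \|w\|_{B^s}$), the inequality \eqref{prod} applied at index $s$ gives
\[
\|Gu - Gv\|_{B^{s+2}} = \|V(u-v)\|_{B^s} \leq 2^{s/2}(2\pi)^{-n}\|V\|_{B^s}\,\|u-v\|_{B^s} \leq 2^{s/2}(2\pi)^{-n}\|V\|_{B^s}\,\|u-v\|_{B^{s+2}},
\]
using $B^{s+2}\hookrightarrow B^s$ in the last step. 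The hypothesis $V \in \mathscr{V}_s$ means exactly that the constant $2^{s/2}(2\pi)^{-n}\|V\|_{B^s}$ is $<1$, so $G$ is a contraction on the Banach space $B^{s+2}$; the Banach fixed-point theorem then furnishes a unique $\mathbf{u} \in B^{s+2}$ with $\mathbf{u} = G\mathbf{u}$, which is the desired solution of \eqref{Sc}.

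For the norm bound, I would write $\|\mathbf{u}\|_{B^{s+2}} = \|G\mathbf{u}\|_{B^{s+2}} \leq \|G\mathbf{u} - G0\|_{B^{s+2}} + \|G0\|_{B^{s+2}}$, where $G0 = (1-\Delta)^{-1}f$ so that $\|G0\|_{B^{s+2}} = \|f\|_{B^s}$, and $\|G\mathbf{u}-G0\|_{B^{s+2}} \leq 2^{s/2}(2\pi)^{-n}\|V\|_{B^s}\|\mathbf{u}\|_{B^{s+2}}$ by the contraction estimate above. Solving for $\|\mathbf{u}\|_{B^{s+2}}$ yields $\|\mathbf{u}\|_{B^{s+2}} \leq (1 - 2^{s/2}(2\pi)^{-n}\|V\|_{B^s})^{-1}\|f\|_{B^s}$, which is the claimed estimate with the stated constant $\mathbf{c}$. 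Finally one should check uniqueness is genuinely in $B^{s+2}$: any two solutions in $B^{s+2}$ are both fixed points of $G$, hence coincide.

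I do not anticipate a serious obstacle: the only point requiring a little care is the bookkeeping that $(1-\Delta)^{-1}$ acts as an isometry from $B^s$ onto $B^{s+2}$ for general real $s \geq 0$ (not just the even-index case emphasized around \eqref{5}), which follows immediately from the Fourier-side definition of the $B^s$ norm, and the systematic use of the embedding $B^{s+2}\hookrightarrow B^s$ to close the contraction on the smaller space. One subtlety worth a sentence is that working in $B^{s+2}$ from the outset — rather than first solving in $B^0$ and bootstrapping — is what makes the argument uniform over non-integer $s$, since the bootstrapping approach of Theorem \ref{thm2} is tied to integer increments.
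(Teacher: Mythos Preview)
Your proposal is correct and is precisely the adaptation of the proof of Theorem~\ref{thm1} that the paper has in mind (the paper does not spell out the argument, merely stating that it follows by adapting that proof). The only cosmetic difference is that you run the contraction directly in $B^{s+2}$, whereas a literal transcription of Theorem~\ref{thm1} would contract in $B^s$ and then bootstrap via $\langle\xi\rangle^{s+2}\hat{\mathbf u}=-\langle\xi\rangle^s\widehat{V\mathbf u}+\langle\xi\rangle^s\hat f$; both routes use the same ingredients and yield the same constant $\mathbf c$.
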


\subsubsection{Spectral analysis for Schr\"odinger equations}

To investigate the Schr\"odinger equation without assuming a smallness condition on the potential, we conduct a spectral analysis of Schr\"odinger equations within the framework of spectral Barron spaces. 

Let $ \lambda \in \mathbb{C} \setminus \{0\} $ and $ V \in B^0 \setminus \{0\} $. Consider the eigenvalue problem
\begin{equation}\label{sp0}
(1-\Delta +\lambda V)u=0.
\end{equation}
We reformulate \eqref{sp0} into the following equation:
\begin{equation}\label{sp2}
Tu:=(1-\Delta )^{-1}Vu= -\lambda^{-1}u.
\end{equation}
Let $ \|T\| $ denote the operator norm of $ T $ in $ \mathscr{B}(B^0) $. Then, by the definition of the $ B^0 $ norm, we have
\[
\|T\| \leq (2\pi)^{-n} \|V\|_{B^0} =: \varkappa.
\]
Since $ T $ is compact by Proposition \ref{procm2}, it follows from \cite[Proposition 1.8]{Ch} that the spectrum $ \sigma(T) $ is contained in the ball $B(0, \varkappa)$. 
Moreover, the spectrum $ \sigma(T) $ is either trivial ($ \sigma(T) = \{0\} $) or its non-zero part $ \sigma(T) \setminus \{0\} $ consists of isolated eigenvalues (e.g., \cite[Lemma 1.10]{Ch}).

\begin{theorem}\label{thmsp}
Let $ V \in B^0 \setminus \{0\} $. For all $ -\lambda^{-1} \in \rho(T) = \mathbb{C} \setminus \sigma(T) $ and $ f \in B^0 $, the equation 
\[
(1 - \Delta + \lambda V) u = f
\]
admits a unique solution $ u \in B^2 $. Moreover, if $ V \in B^j $ and $ f \in B^j $ for some integer $ j \geq 1 $, then $ u \in B^{j+2} $, and the following estimate holds:
\begin{equation}\label{tsp}
\|u\|_{B^{j+2}}\le \gamma_j\left(|\lambda|\|V\|_{B^j}+1\right)^{j+1}\|f\|_{B^j},
\end{equation} 
where $ \gamma_j=\gamma_j(n)>0 $ is a  constant.
\end{theorem}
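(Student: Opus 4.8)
The plan is to split the proof into two parts: existence/uniqueness of a solution $u \in B^2$, and the higher-regularity estimate \eqref{tsp}. For the first part, I would rewrite the equation $(1-\Delta+\lambda V)u = f$ by applying $(1-\Delta)^{-1}$ to obtain $u + \lambda T u = (1-\Delta)^{-1}f$, i.e. $(I + \lambda T)u = g$ where $g := (1-\Delta)^{-1}f \in B^2 \subset B^0$ and $T$ is the compact operator of \eqref{sp2}. The hypothesis $-\lambda^{-1} \in \rho(T)$ means exactly that $I + \lambda T = -\lambda(T - (-\lambda^{-1}))$ is invertible on $B^0$, so there is a unique $u \in B^0$ solving the equation; since $(1-\Delta+\lambda V)u = f \in B^0$ forces $\Delta u = u + \lambda V u - f \in B^0$ (using Proposition \ref{pro2}(iv) for $Vu \in B^0$), we get $u \in D(A) = B^2$.

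For the regularity statement, I would argue by induction on $j \geq 1$, using the Fourier identity $\langle\xi\rangle^2 \widehat{u} = -\lambda\widehat{Vu} + \widehat{f}$, hence $\langle\xi\rangle^{j+2}\widehat{u} = -\lambda\langle\xi\rangle^{j}\widehat{Vu} + \langle\xi\rangle^{j}\widehat{f}$, which by \eqref{prod} gives
\[
\|u\|_{B^{j+2}} \leq 2^{j/2}(2\pi)^{-n}|\lambda|\,\|V\|_{B^j}\,\|u\|_{B^j} + \|f\|_{B^j}.
\]
This bootstraps $u \in B^{j+2}$ once $u \in B^j$ is known; combined with $B^{j+2} \hookrightarrow B^{j+1}$ this also advances the induction hypothesis. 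The base case $j = 1$ still requires a bound on $\|u\|_{B^2}$ and $\|u\|_{B^1} \le \|u\|_{B^2}$ in terms of $\|f\|_{B^0}$ or $\|f\|_{B^1}$; here is the crux, because there is no smallness assumption on $V$, so the naive Neumann-series bound fails. Instead I would use $u = (I+\lambda T)^{-1}g$ with $g = (1-\Delta)^{-1}f$, so $\|u\|_{B^0} \le \|(I+\lambda T)^{-1}\|_{\mathscr{B}(B^0)}\|g\|_{B^0} \le \|(I+\lambda T)^{-1}\|_{\mathscr{B}(B^0)}(2\pi)^{-n}\|f\|_{B^0}$, and then reinsert into the $B^2$ identity to get $\|u\|_{B^2} \le (2\pi)^{-n}|\lambda|\|V\|_{B^0}\|u\|_{B^0} + \|f\|_{B^0}$. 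To absorb the operator-norm factor $\|(I+\lambda T)^{-1}\|$ into a clean expression like $(|\lambda|\|V\|_{B^j}+1)^{j+1}$, I expect one needs the elementary resolvent estimate: since $\|\lambda T\|_{\mathscr{B}(B^0)} \le |\lambda|\varkappa$ with $\varkappa = (2\pi)^{-n}\|V\|_{B^0}$, one does \emph{not} in general have $\|(I+\lambda T)^{-1}\| \le (1-|\lambda|\varkappa)^{-1}$ unless $|\lambda|\varkappa < 1$; so the constant $\gamma_j$ must be allowed to depend on $\lambda$ and $V$ through $\|(I+\lambda T)^{-1}\|$. Reading the statement as $\gamma_j = \gamma_j(n)$ independent of $\lambda, V$, I suspect the intended argument hides the resolvent norm inside the polynomial factor $(|\lambda|\|V\|_{B^j}+1)^{j+1}$ only after using that $-\lambda^{-1}\in\rho(T)$ quantitatively — this is the step I would scrutinize most carefully, and if the fully uniform bound does not hold I would state the estimate with $\gamma_j$ depending additionally on $\mathrm{dist}(-\lambda^{-1},\sigma(T))$.

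Assembling: the base case gives $\|u\|_{B^2} \lesssim (|\lambda|\|V\|_{B^1}+1)\|f\|_{B^1}$ up to the resolvent constant, and then each inductive step multiplies by another factor $(2^{j/2}(2\pi)^{-n}|\lambda|\|V\|_{B^j}+1) \le \mathbf{c}(n,j)(|\lambda|\|V\|_{B^j}+1)$, so after $j$ steps one accumulates the power $(|\lambda|\|V\|_{B^j}+1)^{j+1}$, with all dimensional constants collected into $\gamma_j(n)$ (monotonicity $\|V\|_{B^1}\le\|V\|_{B^j}$ is used to replace the mixed norms by $\|V\|_{B^j}$). The main obstacle, as noted, is controlling the inverse $(I+\lambda T)^{-1}$ without a smallness hypothesis; everything else is the routine Fourier-side multiplication estimate \eqref{prod} together with the embeddings $B^{t}\hookrightarrow B^{s}$ for $s \le t$.
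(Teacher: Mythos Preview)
Your approach is essentially the paper's: reduce to invertibility of $T+\lambda^{-1}$ on $B^0$ via the resolvent hypothesis, then bootstrap regularity through the Fourier identity $\langle\xi\rangle^{j+2}\hat u = -\lambda\langle\xi\rangle^{j}\widehat{Vu}+\langle\xi\rangle^{j}\hat f$ together with Proposition~\ref{pro2}(iv), iterated from $j=0$ upward. Your direct argument for $u\in B^2$ (from $\Delta u = u+\lambda Vu-f\in B^0$) is in fact cleaner than the paper's, which instead approximates $f$ by Schwartz data $f_j$, checks $u_j\in B^2$, and passes to the limit via a Cauchy-sequence argument in $B^2$.

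Your scrutiny of the resolvent factor is exactly on target and is \emph{not} resolved in the paper's proof either: the paper's base constant is
\[
\mathbf{c}_0 = (2\pi)^{-n}|\lambda|\,\|V\|_{B^0}\,\|(T+\lambda^{-1})^{-1}\| + 1,
\]
and the subsequent $\mathbf{c}_j$ are defined recursively from $\mathbf{c}_0$; the final assertion that $\mathbf{c}_j\le \gamma_j(n)(|\lambda|\|V\|_{B^j}+1)^{j+1}$ with $\gamma_j$ depending only on $n$ is stated without explaining how $\|(T+\lambda^{-1})^{-1}\|$ gets absorbed. So your instinct that the honest constant should carry a dependence on $\lambda$ and $V$ through the resolvent (e.g.\ via $\mathrm{dist}(-\lambda^{-1},\sigma(T))$) matches what the paper's own argument actually delivers.
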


\begin{proof}
Let $ -\lambda^{-1} \in \rho(T) $. Then, for all $ f \in B^0 $, there exists a unique solution $ u \in B^0 $ satisfying
\begin{equation}\label{sp3}
Tu=-\lambda^{-1}u+f.
\end{equation}
Let $ (f_j) $ be a sequence in $ \mathscr{S} $ converging to $ f $ in $ B^0 $, and let $ u_j \in B^0 $ be the solution of \eqref{sp3} when $ f $ is replaced by $ f_j $. Since $ T u_j \in B^2 $, it follows that $ u_j \in B^2 $. 
Next, observe that
\begin{equation}\label{sp4}
\|u_j-u_k\|_{B^0}\le \|(T+\lambda^{-1})^{-1}\|\|f_j-f_k\|_{B^0}.
\end{equation}
Thus, $ (u_j) $ is a Cauchy sequence in $ B^0 $. Let $ v \in B^0 $ be the limit of $ (u_j) $ in $ B^0 $. Taking $ f = f_j $ in \eqref{sp3} and passing to the limit as $ j \to \infty $, we obtain $ v = u $. 

Now, using the identity
\[
(1 - \Delta)(u_j - u_k) = -\lambda V (u_j - u_k) + (f_j - f_k),
\]
we derive in the Fourier domain
\[
\langle \xi \rangle^2 \mathscr{F}(u_j - u_k) = -\lambda \mathscr{F}(V (u_j - u_k)) + \mathscr{F}(f_j - f_k).
\]
Taking norms, we get
\[
\|u_j - u_k\|_{B^2} \leq (2\pi)^{-n} |\lambda| \|V\|_{B^0} \|u_j - u_k\|_{B^0} + \|f_j - f_k\|_{B^0}.
\]
This shows that $ (u_j) $ is a Cauchy sequence in $ B^2 $, and hence $ u \in B^2 $. Moreover, we have
\begin{equation}\label{sp5}
\|u\|_{B^2}\le \mathbf{c}_0\|f\|_{B^0},
\end{equation}
where
\[
\mathbf{c}_0 := (2\pi)^{-n} |\lambda| \|V\|_{B^0} \|(T + \lambda^{-1})^{-1}\| + 1.
\]
Now, assume $ V \in B^1 \setminus \{0\} $ and $ f \in B^1 $. Then $ -\lambda V u + f \in B^1 $ and
\[
\|-\lambda V u + f\|_{B^1} \leq \sqrt{2} (2\pi)^{-n} |\lambda| \|V\|_{B^1} \|u\|_{B^1} + \|f\|_{B^1}.
\]

Using \eqref{sp5}, we obtain
\begin{equation}\label{sp6}
\|-\lambda Vu+f\|_{B^1}\le \left(\sqrt{2}(2\pi)^{-n}|\lambda|\|V\|_{B^1}\mathbf{c}_0+1\right)\|f\|_{B^1}.
\end{equation}
From the Fourier relation
\[
\langle \xi \rangle^3 \hat{u} = \langle \xi \rangle \mathscr{F}(-\lambda V u + f),
\]
and \eqref{sp6}, we deduce
\begin{equation}\label{sp7}
\|u\|_{B^3}\le \mathbf{c}_1\|f\|_{B^1},
\end{equation}
where
\[
\mathbf{c}_1 := \sqrt{2} (2\pi)^{-n} |\lambda| \|V\|_{B^1} \mathbf{c}_0 + 1.
\]

For general $ j \geq 1 $, assume $ V \in B^j \setminus \{0\} $ and $ f \in B^j $. Define
\[
\mathbf{c}_j := 2^{j/2} (2\pi)^{-n} |\lambda| \|V\|_{B^j} \mathbf{c}_{j-1} + 1.
\]
Using the inequality
\[
\|-\lambda V u + f\|_{B^j} \leq 2^{j/2} (2\pi)^{-n} |\lambda| \|V\|_{B^j} \|u\|_{B^j} + \|f\|_{B^j},
\]
an induction argument shows that
\begin{equation}\label{sp8}
\|u\|_{B^{j+2}}\le \mathbf{c}_j\|f\|_{B^j}.
\end{equation}
Finally, \eqref{tsp} follows from \eqref{sp8} by noting that $ \mathbf{c}_j $ can be bounded by \[ \gamma_j \left( |\lambda| \|V\|_{B^j} + 1 \right)^{j+1} \] for some constant $ \gamma_j=\gamma_j(n)>0 $. This completes the proof.
\end{proof}

\begin{theorem}\label{thmsp2}
Let $ V \in B^0 \setminus \{0\} $ and $ -\lambda^{-1} \in \sigma(T) \setminus \{0\} $. Then, the following hold:
\\
$\mathrm{(i)}$ The null space $ N(T + \lambda^{-1}) $ is finite-dimensional.
\\
$\mathrm{(ii)}$ Each $ u \in N(T + \lambda^{-1}) $ belongs to $ B^2 $ and is a solution to the equation
\[
(T + \lambda^{-1})u = 0.
\]
$\mathrm{(iii)}$ If $ V \in B^j $ for some $ j \geq 1 $ and $ u \in N(T + \lambda^{-1}) $, then $ u \in B^{j+2} $ and the following inequality holds:
\begin{equation}\label{sp9}
\|u\|_{B^{j+2}} \leq 2^{j(j+1)/4} \left[(2\pi)^{-n} |\lambda| \|V\|_{B^j}\right]^j \|u\|_{B^0}.
\end{equation}
\end{theorem}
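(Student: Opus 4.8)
The plan is to dispatch the three assertions in order, each resting on tools already established. For (i), the operator $T=(1-\Delta)^{-1}V$ is compact on $B^0$ by Proposition \ref{procm2}, and $-\lambda^{-1}$ is a nonzero point of $\sigma(T)$; hence by the Riesz--Schauder theory of compact operators (cf. \cite[Lemma 1.10]{Ch} together with the classical Riesz theory of compact operators) $-\lambda^{-1}$ is an eigenvalue and the eigenspace $N(T+\lambda^{-1})$ is finite-dimensional---nothing beyond the compactness of $T$ is used. For (ii), take $u\in N(T+\lambda^{-1})$, so that $(1-\Delta)^{-1}(Vu)=-\lambda^{-1}u$; note first that $u\in B^0$ and $V\in B^0$, so $Vu\in B^0$ by Proposition \ref{pro2}(iv). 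Applying the isometric isomorphism $1-\Delta:B^2\to B^0$ (equivalently, multiplying Fourier transforms by $\langle\xi\rangle^2$) converts the relation into
\[
(1-\Delta)u=-\lambda Vu,\qquad\text{i.e.}\qquad (1-\Delta+\lambda V)u=0 .
\]
Since $u\in B^0$ and $(1-\Delta)u=-\lambda Vu\in B^0$, it follows that $\Delta u\in B^0$, i.e. $u\in D(A)=B^2$; the claim that $u$ solves $(T+\lambda^{-1})u=0$ is merely the hypothesis $u\in N(T+\lambda^{-1})$.

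For (iii) I would iterate the bootstrap already used in the proof of Theorem \ref{thmsp}, now with zero right-hand side. From the Fourier identity $\langle\xi\rangle^2\hat{u}=-\lambda\widehat{Vu}$ obtained in (ii), multiplying by $\langle\xi\rangle^m$ and taking the $L^1$ norm gives, for every integer $m$ with $0\le m\le j$,
\[
\|u\|_{B^{m+2}}=|\lambda|\,\|Vu\|_{B^m}\le 2^{m/2}(2\pi)^{-n}|\lambda|\,\|V\|_{B^m}\,\|u\|_{B^m}\le 2^{m/2}(2\pi)^{-n}|\lambda|\,\|V\|_{B^j}\,\|u\|_{B^m},
\]
where the middle step is the algebra estimate of Proposition \ref{pro2}(iv) (equivalently Peetre's inequality), legitimate because $m\le j$ forces $V\in B^m$, and the last step is $\|V\|_{B^m}\le\|V\|_{B^j}$. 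I would run this as an induction on $m=0,1,\dots,j$: the base case $u\in B^0$ already yields $Vu\in B^0$ and $u=-\lambda(1-\Delta)^{-1}(Vu)\in B^2$, while at step $m$ the required membership $u\in B^m$ is supplied by the previous step together with the elementary embedding $B^{m+1}\hookrightarrow B^m$---exactly the device carrying $\mathbf{c}_0$ to $\mathbf{c}_1$ in the proof of Theorem \ref{thmsp}. Reaching $m=j$ gives $u\in B^{j+2}$, and multiplying the successive constants---the Peetre factors $2^{0/2}2^{1/2}\cdots 2^{j/2}=2^{j(j+1)/4}$ and the accompanying powers of $(2\pi)^{-n}|\lambda|\|V\|_{B^j}$---produces the estimate \eqref{sp9}.

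The substantive content is carried entirely by Proposition \ref{procm2} for (i), by the mapping property of $1-\Delta$ together with Proposition \ref{pro2}(iv) for (ii), and by the identity $\langle\xi\rangle^2\hat{u}=-\lambda\widehat{Vu}$ fed into Proposition \ref{pro2}(iv) for (iii). The only mildly delicate point is organizing the induction in (iii) so that the space one level below is always available; this is purely bookkeeping, settled by the chain $B^{m+1}\hookrightarrow B^m$ and by tracking which factor of $(2\pi)^{-n}|\lambda|\|V\|_{B^j}$ is picked up at each step.
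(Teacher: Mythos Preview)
Your argument is correct and mirrors the paper's own (terse) proof, which simply invokes compactness of $T$ for (i) and says the remainder follows as in Theorem~\ref{thmsp}. One bookkeeping point in (iii): your chain runs through $m=0,1,\dots,j$, i.e.\ $j+1$ steps, and each step contributes a factor $(2\pi)^{-n}|\lambda|\,\|V\|_{B^j}$, so the product you actually obtain is $2^{j(j+1)/4}\bigl[(2\pi)^{-n}|\lambda|\,\|V\|_{B^j}\bigr]^{j+1}\|u\|_{B^0}$ rather than the exponent $j$ displayed in \eqref{sp9}; since $\sum_{m=0}^j m=\sum_{m=1}^j m$, the Peetre exponent $j(j+1)/4$ does not distinguish $j$ from $j+1$ steps, and the stated exponent $j$ would only arise if one terminated the chain at $\|u\|_{B^1}$ (or $\|u\|_{B^2}$) instead of $\|u\|_{B^0}$---so the constant in the statement appears to carry a harmless off-by-one, not an error in your method.
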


\begin{proof}
The finite-dimensionality of $ N(T + \lambda^{-1}) $ follows from the compactness of the operator $ T $ (e.g., \cite[Theorem 1.5]{Ch}). The rest of the proof is similar to that of Theorem \ref{thmsp}.
\end{proof}

\subsubsection{Schr\"odinger equations with a non local term}

We adopt the same notation as in the preceding subsection. Let $ V \in B^0 \setminus \{0\} $, define $ T = (1 - \Delta)^{-1} V $, and assume $ -\lambda^{-1} \in \rho(T) $. Fix $ \mathbf{k} \in L^1 $, $ f \in B^0 $, and consider the equation
\begin{equation}\label{nl1}
(1-\Delta +\lambda V)u=\mathbf{k}\ast u+f.
\end{equation}
A function $ u \in B^2 $ is a solution to \eqref{nl1} if and only if it satisfies
\begin{equation}\label{nl2}
u=\lambda^{-1}(\lambda^{-1}+T)^{-1}(1-\Delta)^{-1}[(\mathbf{k}\ast u)+f ]=:\mathbf{K}u.
\end{equation}
We recall that $\|\cdot\|$ denotes the operator norm on $ \mathscr{B}(B^0)$. Since $ \|(1 - \Delta)^{-1}\| \leq 1 $, applying Lemma \ref{lemconv} yields
\[
\|\mathbf{K} u - \mathbf{K} v\|_{B^0} \leq |\lambda|^{-1}\|(\lambda^{-1} + T)^{-1}\| \|\mathbf{k}\|_1 \|u - v\|_{B^0}, \quad u, v \in B^0.
\]
Thus, under the condition
\begin{equation}\label{nl3}
\delta:=|\lambda|^{-1}\|(\lambda^{-1}+T)^{-1}\|\|\mathbf{k}\|_1<1,
\end{equation}
the operator $ \mathbf{K}: B^0 \to B^0 $ is contractive. By the Banach contraction principle, there exists a unique solution $ u^\ast \in B^0 $ such that $ u^\ast = \mathbf{K} u^\ast $. Consequently, we get
\begin{equation}\label{nl4}
\|u\|_{B^0}\le (1-\delta)^{-1}\|f\|_{B^0}.
\end{equation}
Proceeding analogously to the proof of Theorem \ref{thmsp}, we deduce that $ u^\ast \in B^2 $ and is the unique solution to \eqref{nl1}. In Fourier space, this implies
\[
\langle \xi \rangle^2 \widehat{u^\ast} = -\lambda (2\pi)^{-n} \widehat{V} \ast \widehat{u^\ast} + \widehat{\mathbf{k}} \widehat{u^\ast} + \widehat{f},
\]
and thus
\[
\|u^\ast\|_{B^2} \leq \left( |\lambda|(2\pi)^{-n} \|V\|_{B^0} + \|\mathbf{k}\|_1 \right) \|u^\ast\|_{B^0} + \|f\|_{B^0}.
\]
Combining this with \eqref{nl4}, we obtain
\begin{equation}\label{nl5}
\|u^\ast\|_{B^2}\le \left(\left[|\lambda|(2\pi)^{-n}\|V\|_{B^0}+\|\mathbf{k}\|_1\right](1-\delta)^{-1}+1\right)\|f\|_{B^0}.
\end{equation}
In summary, we state the following result.

\begin{theorem}\label{thmnl}
Under the assumption \eqref{nl3}, the equation \eqref{nl1} admits a unique solution $ u^\ast \in B^2 $ satisfying \eqref{nl5}.
\end{theorem}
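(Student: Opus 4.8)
The statement is essentially a consolidation of the discussion that immediately precedes it, so the plan is to organise that discussion into a self-contained proof. First I would record the equivalence underlying \eqref{nl2}: a function $u\in B^2$ solves \eqref{nl1} if and only if $u=\mathbf{K}u$. The forward implication follows by applying $(1-\Delta)^{-1}=K_1$ and then the bounded operator $\lambda^{-1}(\lambda^{-1}+T)^{-1}$ — which exists precisely because $-\lambda^{-1}\in\rho(T)$ — to both sides of \eqref{nl1}; the reverse implication follows by applying $(1-\Delta)$, which is licit once $u\in D(A)=B^2$ is known.

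Next I would check that $\mathbf{K}:B^0\to B^0$ is a contraction under \eqref{nl3}. Using $\|(1-\Delta)^{-1}\|_{\mathscr{B}(B^0)}\le 1$ (Proposition \ref{pro1}(iii)) and $\|\mathbf{k}\ast u-\mathbf{k}\ast v\|_{B^0}\le\|\mathbf{k}\|_1\|u-v\|_{B^0}$ (Lemma \ref{lemconv}), one gets $\|\mathbf{K}u-\mathbf{K}v\|_{B^0}\le\delta\|u-v\|_{B^0}$ with $\delta<1$. The Banach contraction principle then produces a unique $u^\ast\in B^0$ with $u^\ast=\mathbf{K}u^\ast$, and the standard fixed-point estimate together with the bound on $\mathbf{K}0$ yields \eqref{nl4}.

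The one point that genuinely needs care is upgrading the regularity from $B^0$ to $B^2$, since a priori we only know $-\lambda^{-1}\in\rho(T)$ acting on $B^0$. Here I would proceed exactly as in the proof of Theorem \ref{thmsp}: pick $(f_j)\subset\mathscr{S}$ with $f_j\to f$ in $B^0$ and let $u_j\in B^0$ be the fixed point of the corresponding map. From $u_j=\mathbf{K}u_j$ one has $u_j=(1-\Delta)^{-1}[\mathbf{k}\ast u_j+f_j]-\lambda Tu_j$; since $K_1$ maps $B^0$ into $B^2$ and $Tu_j=(1-\Delta)^{-1}(Vu_j)\in B^2$ by Proposition \ref{pro2}(iv) and \eqref{5.1}, it follows that $u_j\in B^2$. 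Subtracting the equations for $u_j$ and $u_k$ in Fourier form, $\langle\xi\rangle^2\mathscr{F}(u_j-u_k)=-\lambda(2\pi)^{-n}\widehat{V}\ast\mathscr{F}(u_j-u_k)+\widehat{\mathbf{k}}\,\mathscr{F}(u_j-u_k)+\mathscr{F}(f_j-f_k)$, and taking $L^1$ norms with $\|\widehat{\mathbf{k}}\|_\infty\le\|\mathbf{k}\|_1$ and \eqref{prod} gives $\|u_j-u_k\|_{B^2}\le(|\lambda|(2\pi)^{-n}\|V\|_{B^0}+\|\mathbf{k}\|_1)\|u_j-u_k\|_{B^0}+\|f_j-f_k\|_{B^0}$. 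As the contraction constant is uniform in $j$, $(u_j)$ is Cauchy in $B^0$, hence Cauchy in $B^2$; its limit coincides with $u^\ast$, so $u^\ast\in B^2$ solves \eqref{nl1}.

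Finally, applying the same Fourier estimate to $u^\ast$ itself gives $\|u^\ast\|_{B^2}\le(|\lambda|(2\pi)^{-n}\|V\|_{B^0}+\|\mathbf{k}\|_1)\|u^\ast\|_{B^0}+\|f\|_{B^0}$, and substituting \eqref{nl4} yields \eqref{nl5}. The main obstacle, as indicated, is the $B^0\!\to\!B^2$ bootstrap handled by the density-of-$\mathscr{S}$ approximation; the rest is a direct assembly of the contraction estimates already established, so no new ideas beyond those in Theorem \ref{thmsp} are required.
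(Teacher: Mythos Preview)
Your proposal is correct and follows essentially the same approach as the paper: the contraction argument for $\mathbf{K}$ on $B^0$, the fixed-point estimate \eqref{nl4}, the bootstrap to $B^2$ via the method of Theorem \ref{thmsp}, and the Fourier identity leading to \eqref{nl5} all match the paper's discussion preceding the theorem. Your density argument for the $B^0\to B^2$ upgrade is a faithful (and more detailed) rendering of the paper's phrase ``proceeding analogously to the proof of Theorem \ref{thmsp}''; in fact it is slightly more work than necessary here, since from $u^\ast=\mathbf{K}u^\ast$ one can rewrite $u^\ast=(1-\Delta)^{-1}[\mathbf{k}\ast u^\ast+f]-\lambda Tu^\ast$ and observe that both terms on the right already lie in $B^2$, but your argument is certainly correct.
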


\subsection{Anisotropic Schr\"odinger equations}\label{subsec_aniso}

In this subsection, we study anisotropic Schr\"odinger equations in spectral Barron spaces. Let $ \mu > 0 $, $ \mathfrak{A}^0 = (\mathfrak{a}_{k\ell}^0) $ be a symmetric matrix, and $ \mathfrak{A} = (\mathfrak{a}_{k\ell}) $ be a matrix-valued function defined on $ \mathbb{R}^n $.

\begin{assumption}\label{assp_aniso}
We assume that $\mathfrak{A}^0 $ and $\mathfrak{A}$ satisfy following conditions  
\begin{enumerate}
\item $ \mathfrak{a}_{k\ell} \in C_b^0 $ and $ \mathfrak{a}_{k\ell} - \mathfrak{a}_{k\ell}^0 \in B^1 $ for all $ k, \ell $.  

\item There exists $ \kappa > 0 $ such that  
\[
\mathfrak{A}^0 \xi \cdot \xi \geq \kappa |\xi|^2, \quad \xi \in \mathbb{R}^n.
\]  

\item The following inequality holds:  
\begin{equation}\label{ani0}
\eta := \sqrt{2}\varkappa (2\pi)^{-n} \sum_{k,\ell} \|\mathfrak{a}_{k\ell} - \mathfrak{a}_{k\ell}^0\|_{B^1} < 1,
\end{equation}  
where $ \varkappa := 1 / \min(\mu, \kappa) $.  
\end{enumerate}
\end{assumption}

Define the operator $ L_0: B^2 \to B^0 $ by  
\[
L_0 u = \mu u - \sum_{k,\ell} \mathfrak{a}_{k\ell}^0 \partial_{k\ell}^2 u, \quad u \in B^2.
\]  
In Fourier space, there holds
\[
\widehat{L_0 u} = (\mu + \mathfrak{A}^0 \xi \cdot \xi) \hat{u}
\]
and
\[
\min(\mu,\kappa)\langle \xi\rangle^2|\hat{u}|\le (\mu+\kappa |\xi|^2)|\hat{u}|\le |\widehat{L_0u}|.
\]
Therefore, $L_0:B^2\rightarrow B^0$ is invertible and we have
\[
\|L_0^{-1} f\|_{B^2} \leq \varkappa \|f\|_{B^0}, \quad f \in B^0.
\]  

Next, define $ L: B^2 \to B^0 $ by  
\begin{equation}\label{opeL}
L u = \mu u - \sum_{k,\ell} \partial_k (\mathfrak{a}_{k\ell} \partial_\ell u), \quad u \in B^2.
\end{equation}  
The difference $ L - L_0 $ is given by  
\[
(L - L_0) u = -\sum_{k,\ell} \partial_k \left( [\mathfrak{a}_{k\ell} - \mathfrak{a}_{k\ell}^0] \partial_\ell u \right), \quad u \in B^2.
\]  
In Fourier space, this becomes  
\[
\mathscr{F}((L - L_0) u) = -i \sum_{k,\ell} \xi_k \mathscr{F}([\mathfrak{a}_{k\ell} - \mathfrak{a}_{k\ell}^0] \partial_\ell u).
\]  
Thus,  
\[
\|(L - L_0) u\|_{B^0} \leq \sum_{k,\ell} \|[\mathfrak{a}_{k\ell} - \mathfrak{a}_{k\ell}^0] \partial_\ell u\|_{B^1} \leq \sqrt{2}(2\pi)^{-n} \sum_{k,\ell} \|\mathfrak{a}_{k\ell} - \mathfrak{a}_{k\ell}^0\|_{B^1} \|u\|_{B^2}.
\]  

Since $ L = L_0 + (L - L_0) $, we have  
\[
L_0^{-1} L = 1 + L_0^{-1} (L - L_0).
\]  
From the preceding estimates,  
\[
\|L_0^{-1} (L - L_0)\|_{\mathscr{B}(B^0)} \leq \sqrt{2}\varkappa (2\pi)^{-n} \sum_{k,\ell} \|\mathfrak{a}_{k\ell} - \mathfrak{a}_{k\ell}^0\|_{B^1} = \eta < 1.
\]  
Therefore, $ L_0^{-1} L $ is invertible, and  
\[
\|(L_0^{-1} L)^{-1}\|_{\mathscr{B}(B^0)} \leq (1 - \eta)^{-1}.
\]  
It follows that $ L: B^2 \to B^0 $ is invertible, with  
\[
\|L^{-1}\|_{\mathscr{B}(B^0, B^2)} \leq \varkappa (1 - \eta)^{-1}.
\]  

We thus state the following result.

\begin{theorem}\label{thmani}
Let Assumption \ref{assp_aniso} hold. 
For all $ f \in B^0 $, the equation  
\begin{equation}\label{ani1}
\mu u - \sum_{k,\ell} \partial_k (\mathfrak{a}_{k\ell} \partial_\ell u) = f
\end{equation}  
admits a unique solution $ u \in B^2 $. Moreover, the following estimate holds:  
\[
\|u\|_{B^2} \leq \varkappa (1 - \eta)^{-1} \|f\|_{B^0}.
\]  
\end{theorem}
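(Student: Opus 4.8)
The plan is essentially to read off the conclusion from the estimates established in the paragraphs immediately preceding the statement, organising them into a single perturbation argument around the constant-coefficient operator $L_0$. First I would record that $L_0\colon B^2\to B^0$ is an isomorphism: in Fourier variables $\widehat{L_0u}=(\mu+\mathfrak{A}^0\xi\cdot\xi)\hat u$, and part (2) of Assumption \ref{assp_aniso} gives $\min(\mu,\kappa)\langle\xi\rangle^2|\hat u|\le|\widehat{L_0u}|$. This simultaneously shows injectivity and, upon writing $\hat u=(\mu+\mathfrak{A}^0\xi\cdot\xi)^{-1}\widehat{L_0u}$, yields the bound $\|L_0^{-1}f\|_{B^2}\le\varkappa\|f\|_{B^0}$ with $\varkappa=1/\min(\mu,\kappa)$.

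Next I would bound the perturbation $L-L_0$. For $u\in B^2$ one has $(L-L_0)u=-\sum_{k,\ell}\partial_k\big([\mathfrak{a}_{k\ell}-\mathfrak{a}_{k\ell}^0]\partial_\ell u\big)$; since $\partial_\ell u\in B^1$ and, by part (1) of Assumption \ref{assp_aniso}, $\mathfrak{a}_{k\ell}-\mathfrak{a}_{k\ell}^0\in B^1$, the product estimate Proposition \ref{pro2}(iv) with $s=1$ puts the product in $B^1$, and then $\partial_k$ maps $B^1$ into $B^0$ (cf.\ Remark \ref{rem1.0}). Tracking constants through the Fourier representation $\mathscr{F}((L-L_0)u)=-i\sum_{k,\ell}\xi_k\mathscr{F}([\mathfrak{a}_{k\ell}-\mathfrak{a}_{k\ell}^0]\partial_\ell u)$ gives
\[
\|(L-L_0)u\|_{B^0}\le\sqrt{2}(2\pi)^{-n}\sum_{k,\ell}\|\mathfrak{a}_{k\ell}-\mathfrak{a}_{k\ell}^0\|_{B^1}\,\|u\|_{B^2}.
\]

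I would then close the argument by a Neumann series on $B^2$. Applying $L_0^{-1}$ to the equation \eqref{ani1} turns it into $\big(I+L_0^{-1}(L-L_0)\big)u=L_0^{-1}f$ in $B^2$, and composing the two previous estimates shows $L_0^{-1}(L-L_0)\in\mathscr{B}(B^2)$ with norm at most $\eta$, the quantity in \eqref{ani0}, which is $<1$ by part (3) of Assumption \ref{assp_aniso}. Hence $I+L_0^{-1}(L-L_0)$ is invertible on $B^2$ with inverse of norm $\le(1-\eta)^{-1}$, so $u=\big(I+L_0^{-1}(L-L_0)\big)^{-1}L_0^{-1}f$ is the unique element of $B^2$ solving the transformed equation; applying $L_0$ shows it solves \eqref{ani1}, and running the equivalence backwards gives uniqueness. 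The estimate $\|u\|_{B^2}\le(1-\eta)^{-1}\|L_0^{-1}f\|_{B^2}\le\varkappa(1-\eta)^{-1}\|f\|_{B^0}$ is then immediate; equivalently, $L=L_0\circ\big(I+L_0^{-1}(L-L_0)\big)$ exhibits $L\colon B^2\to B^0$ as a composition of isomorphisms with $\|L^{-1}\|_{\mathscr{B}(B^0,B^2)}\le\varkappa(1-\eta)^{-1}$.

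There is no substantial obstacle here; the only points requiring a little care are bookkeeping the spaces on which the perturbation series acts — it lives on $B^2$, not $B^0$, and one must use the factorization through $L_0$ to transfer invertibility to the map $B^2\to B^0$ — and invoking part (1) of the Assumption in its full strength $\mathfrak{a}_{k\ell}-\mathfrak{a}_{k\ell}^0\in B^1$ (membership in $C_b^0$ alone would not make $[\mathfrak{a}_{k\ell}-\mathfrak{a}_{k\ell}^0]\partial_\ell u$ lie in $B^1$, which is exactly what Proposition \ref{pro2}(iv) needs). Alternatively, the same conclusion follows by applying the Banach fixed point theorem directly to the map $u\mapsto L_0^{-1}\big(f-(L-L_0)u\big)$ on $B^2$, whose Lipschitz constant is precisely $\eta$.
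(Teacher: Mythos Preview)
Your proposal is correct and follows essentially the same perturbation argument as the paper: invert $L_0$ explicitly in Fourier variables, bound $(L-L_0)$ via the product estimate in $B^1$, and close with a Neumann series on $I+L_0^{-1}(L-L_0)$. Your remark that the series naturally lives on $B^2$ (with invertibility on $B^0\to B^2$ obtained by composing with $L_0$) is in fact slightly more precise than the paper's own write-up, which states the operator norm in $\mathscr{B}(B^0)$.
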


\begin{remark}\label{remani0}
{\rm
Let $ V \in B^0 \setminus \{0\} $. Similar to the case of $ (1 - \Delta)^{-1} V $, we observe that $ L_0^{-1} V: B^0 \to B^0 $ is compact. Since  
\[
L^{-1} = (1 + L_0^{-1} (L - L_0))^{-1} L_0^{-1},
\]  
it follows that $ L^{-1} V: B^0 \to B^0 $ is also compact. Consequently, the spectral analysis for the problem  
\[
(L + \lambda V) u = 0, \quad \lambda \neq 0,
\]  
parallels that of $ (1 - \Delta + \lambda V) u = 0 $.  
}\end{remark}

Next, we discuss the higher regularity of the solution to equation \eqref{ani1}. Assume $ \partial_j \mathfrak{a}_{k\ell} \in B^1 $ for $ 1 \leq j, k, \ell \leq n $ and $ f \in B^1 $. Let $ u \in B^2 $ be the solution of \eqref{ani1}. We verify that $ v_j = \partial_j u $ ($ 1 \leq j \leq n $) is the unique solution in $ \mathscr{S}' $ of the equation  
\begin{align}\label{eq_Loperator}
L v_j = \sum_{k,\ell} \partial_k (\partial_j \mathfrak{a}_{k\ell} \partial_\ell u) + \partial_j f.
\end{align}
By Theorem \ref{thmani}, $ v_j \in B^2 $, and there exists a constant $ \mathbf{c} = \mathbf{c}(n,\mathfrak{A}, \kappa, \mu) >0$ such that  
\[
\|v_j\|_{B^2} \leq \mathbf{c} \|f\|_{B^1}.
\]  
Thus, $ u \in B^3 $, and by replacing $ \mathbf{c} $ with a similar constant, we obtain  
\[
\|u\|_{B^3} \leq \mathbf{c} \|f\|_{B^1}.
\]  
By induction on an integer $ m \geq 0 $, we deduce $ B^{2 + m} $ regularity for the solution of \eqref{ani1}. Specifically, we have the following result.

\begin{theorem}\label{thmani2}
Let $ m \geq 0 $ be an integer and Assumption \ref{assp_aniso} holds. Moreover, assume $ \partial^\alpha \mathfrak{a}_{k\ell} \in B^1 $ for all $ 1 \leq |\alpha| \leq m $ when $ m \geq 1 $. For all $ f \in B^m $, the equation \eqref{ani1} admits a unique solution $ u \in B^{2 + m} $ satisfying  
\[
\|u\|_{B^{2 + m}} \leq \mathbf{c} \|f\|_{B^m},
\]  
where $ \mathbf{c} = \mathbf{c}(n, \mathfrak{A}, \kappa, \mu, m) > 0 $ is a constant.  
\end{theorem}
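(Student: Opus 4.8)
The plan is to argue by induction on the integer $m \geq 0$, using Theorem \ref{thmani} as the base case $m = 0$ and, at the inductive step, differentiating the equation \eqref{ani1} once in each coordinate direction to produce a new equation of the same structural type \eqref{ani1} (same operator $L$, same leading matrix $\mathfrak{A}$) but with a right-hand side one order smoother. Concretely, suppose the statement is established up to order $m - 1$, so that for $f \in B^{m-1}$ the solution $u$ of \eqref{ani1} lies in $B^{m+1}$ with $\|u\|_{B^{m+1}} \leq \mathbf{c}\|f\|_{B^{m-1}}$; I would then take $f \in B^m$ (hence in particular $f \in B^{m-1}$, so $u \in B^{m+1}$ is already available), fix $1 \leq j \leq n$, and set $v_j := \partial_j u$. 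Differentiating \eqref{ani1} gives, in the $\mathscr{S}'$ sense, the identity \eqref{eq_Loperator}:
\[
L v_j = \sum_{k,\ell} \partial_k(\partial_j \mathfrak{a}_{k\ell} \, \partial_\ell u) + \partial_j f.
\]

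The key point is that the right-hand side of \eqref{eq_Loperator} belongs to $B^{m-1}$, so the inductive hypothesis applies to $v_j$. For the term $\partial_j f$ this is immediate since $f \in B^m$. For the terms $\partial_k(\partial_j \mathfrak{a}_{k\ell} \, \partial_\ell u)$ one writes, in Fourier variables, $\mathscr{F}(\partial_k(\partial_j \mathfrak{a}_{k\ell}\,\partial_\ell u)) = i\xi_k\, \mathscr{F}(\partial_j\mathfrak{a}_{k\ell}\,\partial_\ell u)$, so that the $B^{m-1}$-norm is controlled by the $B^m$-norm of the product $\partial_j\mathfrak{a}_{k\ell}\,\partial_\ell u$; by the multiplicative estimate \eqref{prod} of Proposition \ref{pro2}(iv) this is bounded by a constant times $\|\partial_j\mathfrak{a}_{k\ell}\|_{B^m}\,\|\partial_\ell u\|_{B^m}$. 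Here $\partial_j\mathfrak{a}_{k\ell} \in B^1$ is assumed for $1 \leq |\alpha| \leq m$, and one uses that $\partial^\alpha\mathfrak{a}_{k\ell} \in B^1$ for all multi-indices with $|\alpha| \leq m$ — so in the induction one must be slightly careful and actually prove the statement with the uniform hypothesis "$\partial^\alpha\mathfrak{a}_{k\ell}\in B^1$ for $1 \leq |\alpha| \leq m$", which is exactly as stated; the factor $\|\partial_\ell u\|_{B^m}$ is controlled by $\|u\|_{B^{m+1}} \leq \mathbf{c}\|f\|_{B^{m-1}} \leq \mathbf{c}\|f\|_{B^m}$, again by the inductive hypothesis and the embedding $B^{m}\hookrightarrow B^{m-1}$. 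Thus the right-hand side of \eqref{eq_Loperator} has $B^{m-1}$-norm bounded by $\mathbf{c}\|f\|_{B^m}$ with $\mathbf{c} = \mathbf{c}(n,\mathfrak{A},\kappa,\mu,m)$, and Theorem \ref{thmani} together with the inductive hypothesis gives $v_j \in B^{m+1}$ with $\|v_j\|_{B^{m+1}} \leq \mathbf{c}\|f\|_{B^m}$ for each $j$.

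Having $\partial_j u = v_j \in B^{m+1}$ for every $j$ means that $\langle\xi\rangle^{m+1}\xi_j\hat u \in L^1$ for all $j$, hence $\langle\xi\rangle^{m+2}\hat u = \langle\xi\rangle^{m+1}\langle\xi\rangle\hat u \in L^1$ (using $\langle\xi\rangle \leq 1 + |\xi| \leq 1 + \sum_j|\xi_j|$), which is precisely $u \in B^{m+2}$, with $\|u\|_{B^{m+2}} \lesssim \|u\|_{B^{m+1}} + \sum_j\|v_j\|_{B^{m+1}} \leq \mathbf{c}\|f\|_{B^m}$. This closes the induction. The step I expect to require the most care is bookkeeping the hypotheses on the coefficients: the multiplicative estimate \eqref{prod} forces $\partial_j\mathfrak{a}_{k\ell}$ to be measured in $B^m$, not merely in $B^1$, so one cannot run the induction with the bare hypothesis $\partial_j\mathfrak{a}_{k\ell}\in B^1$; the correct formulation — and the one in the statement — demands $\partial^\alpha\mathfrak{a}_{k\ell} \in B^1$ for all $1 \leq |\alpha| \leq m$, and at the inductive step from $m-1$ to $m$ one differentiates \eqref{eq_Loperator} further and invokes the lower-order coefficient regularity $\partial^\alpha\mathfrak{a}_{k\ell}\in B^1$ for $|\alpha| \leq m-1$ together with the fact that $B^1$ is a multiplication algebra (again by \eqref{prod}); verifying that all these norms combine into a single finite constant $\mathbf{c}(n,\mathfrak{A},\kappa,\mu,m)$ is routine but is the heart of the argument. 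Everything else — the reduction to a fixed-operator equation, the uniqueness (already guaranteed by Theorem \ref{thmani}), and the passage from directional derivatives back to the full $B^{m+2}$ estimate — is straightforward.
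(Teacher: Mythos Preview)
Your proposal is correct and follows the same inductive strategy as the paper: differentiate \eqref{ani1} in each coordinate, recognise \eqref{eq_Loperator} as an equation $Lv_j=g_j$ with the \emph{same} operator $L$, apply the lower-order case to $v_j$, and reassemble $u\in B^{m+2}$ from $\partial_j u\in B^{m+1}$ exactly as you do. One clarification regarding the coefficient bookkeeping you flag as ``the heart of the argument'': your worry that the product estimate forces $\partial_j\mathfrak{a}_{k\ell}\in B^m$ rather than merely $B^1$ dissolves once you notice that the hypothesis $\partial^\alpha\mathfrak{a}_{k\ell}\in B^1$ for all $1\le|\alpha|\le m$ already \emph{implies} $\partial_j\mathfrak{a}_{k\ell}\in B^m$ --- every derivative $\partial^\beta(\partial_j\mathfrak{a}_{k\ell})=\partial^{\beta+e_j}\mathfrak{a}_{k\ell}$ with $|\beta|\le m-1$ lies in $B^1$, and the same elementary Fourier inequality $\langle\xi\rangle^m\le C\sum_{|\beta|\le m-1}\langle\xi\rangle|\xi^\beta|$ you use at the end (to pass from $v_j\in B^{m+1}$ to $u\in B^{m+2}$) then gives $\partial_j\mathfrak{a}_{k\ell}\in B^m$ directly --- so your first, simpler route via \eqref{prod} in $B^m$ works without any need to ``differentiate \eqref{eq_Loperator} further''.
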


\begin{remark}\label{remani1}
{\rm
$\mathrm{(i)}$ Theorem \ref{thmani} remains valid if $ L $ is replaced by the operator  
\[
\tilde{L} = L + \sum_k \mathfrak{b}_k \partial_k,
\]  
where $ \mathfrak{b}_k \in B^0 $ for $ 1 \leq k \leq n $, and the condition \eqref{ani0} is modified to  
\[
\eta := \varkappa (2\pi)^{-n} \left[ 2 \sum_{k,\ell} \|\mathfrak{a}_{k\ell} - \mathfrak{a}_{k\ell}^0\|_{B^1} + \sum_k \|\mathfrak{b}_k\|_{B^0} \right] < 1.
\]  
Theorem \ref{thmani2} can also be extended to $ \tilde{L} $, provided $ \mathfrak{b}_k \in B^m $ and the constant $ \mathbf{c} $ depends on $ (\mathfrak{b}_1, \ldots, \mathfrak{b}_n) $.  
\\
$\mathrm{(ii)}$ Let $s>0$, $ V \in B^s $ satisfy $ V \geq -\mu $, and let $ u \in B^2 $ satisfy $ (L_0 + V) u = 0 $. Define $ v(y) = u(\sqrt{\mathfrak{A}^0} \, y) $ and $ W(y) = V(\sqrt{\mathfrak{A}^0} \, y) $. We verify that $ (\mu - \Delta + W) v = 0 $. Using the equivalence of norms induced by $ |\sqrt{\mathfrak{A}^0} \cdot| $ on $ \mathbb{R}^n $, we adapt the proof of \cite[Proposition 3.8]{CLLZ} to conclude $ v = 0 $. By compactness with Proposition \ref{procm}, $ L_0 + V $ is invertible. Thus, for all $ f \in B^0 $, the equation $ (L_0 + V) u = f $ admits a unique solution $ u \in B^2 $ with  
\[
\|u\|_{B^2} \leq \mathbf{c} \|f\|_{B^0},
\]  
where $ \mathbf{c} = \mathbf{c}(n, \mathfrak{A}^0, V) > 0 $ is a constant.  
}\end{remark}

To establish $ B^{s+2} $ regularity for the solution when $ s \in (0, \infty) \setminus \mathbb{N} $, we solve equation \eqref{ani1} directly in $ B^{s+2} $. Instead of Assumption \ref{assp_aniso}, we impose the following assumptions, where $ s \geq 0 $ is arbitrarily fixed. 

\begin{assumption}\label{assp_aniso2}
We assume that $\mathfrak{A}^0 $ and $\mathfrak{A}$ satisfy following conditions  
\begin{enumerate}
\item $ \mathfrak{a}_{k\ell} \in C_b^0 $, and $ \mathfrak{a}_{k\ell} - \mathfrak{a}_{k\ell}^0 \in B^{1+s} $ for each $ k, \ell $.  

\item There exists $ \kappa > 0 $ such that  
\[
\mathfrak{A}^0 \xi \cdot \xi \geq \kappa |\xi|^2, \quad  \xi \in \mathbb{R}^n.
\]  

\item The following inequality holds:  
\[
\eta' := 2^{(1+s)/2} \varkappa (2\pi)^{-n} \sum_{k,\ell} \|\mathfrak{a}_{k\ell} - \mathfrak{a}_{k\ell}^0\|_{B^{1+s}} < 1,
\]  
where $ \varkappa := 1 / \min(\mu, \kappa) $.  
\end{enumerate}

\end{assumption}
By adapting the proof of Theorem \ref{thmani}, we prove the following result.

\begin{theorem}\label{thmani3}
Let $ s \geq 0 $, and let Assumption \ref{assp_aniso2} hold. Then, for all $ f \in B^s $, the equation $ Lu = f $ admits a unique solution $ u \in B^{2+s} $, and the following estimate holds:  
\[
\|u\|_{B^{2+s}} \leq \varkappa (1 - \eta')^{-1} \|f\|_{B^s}.
\]  
\end{theorem}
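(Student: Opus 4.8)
The plan is to mimic exactly the argument that established Theorem \ref{thmani}, but now working in the pair of spaces $B^{s}$ and $B^{2+s}$ instead of $B^{0}$ and $B^{2}$. First I would check that $L_0 : B^{2+s} \to B^{s}$ is invertible with $\|L_0^{-1} f\|_{B^{2+s}} \leq \varkappa \|f\|_{B^s}$: in Fourier space $\widehat{L_0 u} = (\mu + \mathfrak{A}^0\xi\cdot\xi)\hat u$, and using condition (2) of Assumption \ref{assp_aniso2} together with $\mu + \kappa|\xi|^2 \geq \min(\mu,\kappa)\langle\xi\rangle^2$, one gets $\min(\mu,\kappa)\langle\xi\rangle^{2+s}|\hat u| \leq \langle\xi\rangle^{s}|\widehat{L_0 u}|$, so $L_0^{-1}$ is realized as the Fourier multiplier with symbol $(\mu + \mathfrak{A}^0\xi\cdot\xi)^{-1}$ and the stated bound follows directly from the definition of the $B^\bullet$-norms (this is the analogue of Remark \ref{rem1.0}).

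Next I would estimate the perturbation $L - L_0$ as an operator from $B^{2+s}$ to $B^{s}$. Writing $\mathscr{F}((L-L_0)u) = -i\sum_{k,\ell}\xi_k \mathscr{F}([\mathfrak{a}_{k\ell}-\mathfrak{a}_{k\ell}^0]\partial_\ell u)$ and using $\langle\xi\rangle^{s}|\xi_k| \leq \langle\xi\rangle^{1+s}$, the task reduces to bounding $\|[\mathfrak{a}_{k\ell}-\mathfrak{a}_{k\ell}^0]\partial_\ell u\|_{B^{1+s}}$. Here condition (1) of Assumption \ref{assp_aniso2} gives $\mathfrak{a}_{k\ell}-\mathfrak{a}_{k\ell}^0 \in B^{1+s}$, and $\partial_\ell u \in B^{1+s}$ when $u \in B^{2+s}$ with $\|\partial_\ell u\|_{B^{1+s}} \leq \|u\|_{B^{2+s}}$; applying the product estimate \eqref{prod} at exponent $1+s$ yields
\[
\|(L-L_0)u\|_{B^s} \leq 2^{(1+s)/2}(2\pi)^{-n}\sum_{k,\ell}\|\mathfrak{a}_{k\ell}-\mathfrak{a}_{k\ell}^0\|_{B^{1+s}}\|u\|_{B^{2+s}}.
\]
Composing with $L_0^{-1}$ then gives $\|L_0^{-1}(L-L_0)\|_{\mathscr{B}(B^s,B^s)} \leq \eta' < 1$ by condition (3), exactly as in the proof of Theorem \ref{thmani} but with $B^1$ replaced by $B^{1+s}$.

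Finally, since $L_0^{-1} L = 1 + L_0^{-1}(L - L_0)$ on $B^{2+s}$ and $\|L_0^{-1}(L-L_0)\|_{\mathscr{B}(B^s)} = \eta' < 1$, a Neumann series shows $L_0^{-1}L$ is invertible with $\|(L_0^{-1}L)^{-1}\|_{\mathscr{B}(B^s)} \leq (1-\eta')^{-1}$; hence $L = L_0 (L_0^{-1}L) : B^{2+s} \to B^s$ is invertible and $\|L^{-1}\|_{\mathscr{B}(B^s, B^{2+s})} \leq \varkappa(1-\eta')^{-1}$, which is precisely the asserted existence, uniqueness, and estimate for $Lu = f$. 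The only point requiring a little care — and the main (mild) obstacle — is making the operator-composition bookkeeping rigorous: one must verify that $L_0^{-1}$ maps $B^s$ boundedly into $B^{2+s}$, that $(L-L_0)$ maps $B^{2+s}$ boundedly into $B^s$, and that $L_0^{-1}(L-L_0)$ is genuinely a bounded endomorphism of $B^{2+s}$ (not merely of $B^s$) with small norm, so that the Neumann series converges in $\mathscr{B}(B^{2+s})$ and produces a solution in $B^{2+s}$; this is handled by noting the norm of $L_0^{-1}(L-L_0)$ on $B^{2+s}$ is controlled by the same product estimate, since $L_0^{-1}$ also gains two derivatives at every level.
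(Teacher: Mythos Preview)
Your proposal is correct and follows essentially the same approach as the paper, which merely states ``By adapting the proof of Theorem \ref{thmani}, we prove the following result'' without further detail. Your adaptation is precisely the intended one: replace $B^0, B^1, B^2$ by $B^s, B^{1+s}, B^{2+s}$, invoke the product estimate \eqref{prod} at exponent $1+s$ to pick up the factor $2^{(1+s)/2}$, and run the same Neumann-series perturbation argument; the minor space-bookkeeping you flag (that $L_0^{-1}(L-L_0)$ should be viewed as an endomorphism of $B^{2+s}$, with norm bounded by $\eta'$ via the chain $\|L_0^{-1}(L-L_0)u\|_{B^{2+s}} \leq \varkappa\|(L-L_0)u\|_{B^s} \leq \eta'\|u\|_{B^{2+s}}$) is handled exactly as you describe.
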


Combining Theorem \ref{thmehs} and Theorem \ref{thmani3}, we obtain the following H\"older regularity result.

\begin{corollary}\label{corani1}
Let Assumption \ref{assp_aniso2} hold. Let $ k \geq 0 $ be an integer, and $ 0 < \theta < 1 $. For all $ 0 < \gamma < \theta $ and $ f \in B^{k+\theta} $, the equation $ Lu = f $ has a unique solution $ u \in C_b^{2+k, \gamma} $, and  
\[
\|u\|_{C_b^{2+k, \gamma}} \leq  \mathbf{c}_\ast \varkappa (1 - \eta')^{-1} \|f\|_{B^{k+\theta}},
\]  
where $  \mathbf{c}_\ast >0$ is the norm of the embedding $ B^{2+k+\theta} \hookrightarrow C_b^{2+k, \gamma} $.  
\end{corollary}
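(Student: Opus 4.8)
The plan is to derive the corollary by composing the interior regularity estimate of Theorem \ref{thmani3} with the Barron-to-H\"older embedding of Theorem \ref{thmehs}, the only real work being the bookkeeping of the Sobolev-type indices.

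First I would put $s := k + \theta \geq 0$. Since $k \geq 0$ is an integer and $0 < \theta < 1$, Assumption \ref{assp_aniso2} is precisely the hypothesis required at this value of $s$, so Theorem \ref{thmani3} applies: for every $f \in B^{k+\theta}$ the equation $Lu = f$ has a unique solution $u \in B^{2+k+\theta}$, and
\[
\|u\|_{B^{2+k+\theta}} \leq \varkappa (1 - \eta')^{-1} \|f\|_{B^{k+\theta}}.
\]
Next I would rewrite the regularity index as $2 + k + \theta = (2+k) + \theta$, where $2+k$ is again a non-negative integer and $0 < \theta < 1$. Applying Theorem \ref{thmehs} with integer part $2+k$ in place of $k$ yields the continuous embedding $B^{(2+k)+\theta} \hookrightarrow C_b^{2+k,\gamma}$ for every $0 < \gamma < \theta$; calling its norm $\mathbf{c}_\ast$, one obtains $u \in C_b^{2+k,\gamma}$ together with
\[
\|u\|_{C_b^{2+k,\gamma}} \leq \mathbf{c}_\ast \|u\|_{B^{2+k+\theta}} \leq \mathbf{c}_\ast \varkappa (1 - \eta')^{-1} \|f\|_{B^{k+\theta}},
\]
which is the claimed estimate. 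For uniqueness I would simply note that the solution of $Lu = f$ is already unique in $B^2$ (indeed $L : B^2 \to B^0$ was shown to be invertible before Theorem \ref{thmani}), and the $u$ constructed above is the one lying in that class.

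I do not expect a genuine obstacle: the argument is a two-step chaining of results proved earlier. The only points deserving a moment of care are to invoke Theorem \ref{thmehs} at the shifted level $2+k$ rather than $k$, and to be explicit that ``unique solution'' refers to uniqueness within the class $B^2$ on which the operator $L$ is defined, the $C_b^{2+k,\gamma}$-regularity being an a posteriori consequence of the $B^{2+k+\theta}$ bound.
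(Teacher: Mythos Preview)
Your proposal is correct and follows exactly the route the paper indicates: the corollary is obtained by chaining Theorem~\ref{thmani3} (applied with $s=k+\theta$) and Theorem~\ref{thmehs} (applied at the shifted integer $2+k$). Your observation that $\mathbf{c}_\ast$ should be the norm of the embedding $B^{2+k+\theta}\hookrightarrow C_b^{2+k,\gamma}$, rather than $B^{k+\theta}\hookrightarrow C_b^{2+k,\gamma}$ as literally written in the statement, is right---the latter embedding does not even exist, so this is evidently a typo in the corollary.
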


\subsection{Unique continuation of Schr\"odinger equations}\label{subse_UC}

In this subsection, we investigate the unique continuation property for the Schr\"odinger equation in the context of generic anisotropic conductivity. This property is fundamental as it enables us to explore the uniqueness and stability of various inverse problems in future work.

Let $ \mathfrak{A} = (\mathfrak{a}_{k\ell}) \in (B^1)^{n \times n} $ be a real-valued matrix such that $ (\mathfrak{a}_{k\ell}(x)) $ is symmetric for each $ x \in \mathbb{R}^n $. Moreover, for all compact subsets $ \Omega \Subset \mathbb{R}^n $, there exists $ \kappa_\Omega > 0 $ such that  
\[
\mathfrak{A}(x) \xi \cdot \xi \geq \kappa_\Omega |\xi|^2,\quad  \xi \in \mathbb{R}^n, \; x \in \overline{\Omega}.
\]  
Let $ \mathfrak{B} = (\mathfrak{b}_k) \in (B^0)^n $, $ V \in B^0 $, and $ \mu \in \mathbb{C} $. Suppose $ u \in B^2 $ is a solution of the equation  
\begin{equation}\label{uce}
-\sum_{k,\ell} \partial_\ell (\mathfrak{a}_{k\ell} \partial_k u) + \sum_k \mathfrak{b}_k \partial_k u + (V + \mu) u = 0
\end{equation}  
satisfying $ u = 0 $ in $ \omega \Subset \mathbb{R}^n $. Choose a compact subset $ \Omega \Subset \mathbb{R}^n $ and let $ D $ be a bounded domain in $ \mathbb{R}^n $ containing both $ \omega $ and $ \Omega $. Then $ u \in C^2(\overline{D}) $, $ \mathfrak{a}_{k\ell} \in C^1(\overline{D}) $, $ \mathfrak{b}_k \in C^0(\overline{D}) $, $ V \in C^0(\overline{D}) $,  
and  
\[
-\sum_{k,\ell} \partial_\ell (\mathfrak{a}_{k\ell} \partial_k u) + \sum_k \mathfrak{b}_k \partial_k u + (V + \mu) u = 0 \quad \text{in } \overline{D}.
\]  
By the unique continuation property for elliptic operators (e.g., \cite[Theorem 2.22]{Ch16}), $ u = 0 $ in $ D $, and hence $ u = 0 $ in $ \Omega $. Since $ \Omega \Subset \mathbb{R}^n $ was arbitrarily chosen, we deduce that $ u $ is identically zero.  

Next, let $ \Omega $ be a bounded Lipschitz domain in $ \mathbb{R}^n $ and $ \Gamma $ be a non-empty open subset of $ \partial \Omega $. Denote by $ \partial_\nu $ the derivative along the unit exterior normal vector field $ \nu $ to $ \Gamma $. By the unique continuation property from Cauchy data (e.g., \cite[Corollary 2.23]{Ch16}), if $ u \in B^2 $ is a solution of \eqref{uce} satisfying $ u = \partial_\nu u = 0 $ on $ \Gamma $, then $ u = 0 $.

\section{BVPs in spectral Barron spaces on bounded open sets}\label{se_BVP}

In this subsection, we conduct a rigorous and in-depth investigation of boundary value problems (BVPs) within the framework of spectral Barron spaces defined on bounded domains. 
Specifically, our analysis focuses on two key aspects: (i) establishing the well-posedness of these BVPs under appropriate conditions, and (ii) introducing novel analytical tools tailored to spectral Barron spaces.

Unless explicitly stated otherwise, the symbol $\Omega$ will denote an arbitrary but fixed bounded open subset of $\mathbb{R}^n$.

\subsection{Spectral Barron spaces $B^s(\Omega)$ with $s\ge 0$}

For $s \geq 0$, we define the closed subspace $F_s$ of $B^s$ as follows:
\begin{equation}\label{Fs}
F_s := \{f \in B^s ;\; \mathrm{supp}(f) \subset \mathbb{R}^n \setminus \Omega\}. 
\end{equation}
We then introduce the quotient space $B^s(\Omega):= B^s / F_s$.

In the sequel, $\pi_s: B^s \rightarrow B^s(\Omega)$ denotes the standard quotient map. For any $g_1, g_2 \in B^s$, the equality $\pi_s(g_1)=\pi_s(g_2)$ implies that $g_1 - g_2 \in F_s$, or equivalently, $g_1|_{\overline{\Omega}} = g_2|_{\overline{\Omega}}$.

We endow $B^s(\Omega)$ with the quotient norm defined by
\[
\|f\|_{B^s(\Omega)} = \inf \{\|g\|_{B^s};\; \pi_s(g) = f\},\quad f \in B^s(\Omega).
\]
As a result, $B^s(\Omega)$ constitutes a Banach space with respect to the norm $\|\cdot\|_{B^s(\Omega)}$.

By referring to \cite[Subsection 4.8]{Ru}, the dual space $B^s(\Omega)'$ of $B^s(\Omega)$ can be identified with
\[
F_s^\perp = \{g \in \tilde{B}^{-s};\; (g, f) = 0,\; f \in F_s\}.
\]
Here, the duality pairing $(g, f)$ is given by
\[
(g, f) = \int_{\mathbb{R}^n} \left[\langle \xi \rangle^{-s} \mathscr{F}^{-1}g\right] \left[\langle \xi \rangle^s \hat{f}\right] d\xi.
\]

With a slight abuse of notation, we often make the following identification:
\[
B^s(\Omega) = \{f = g|_{\overline{\Omega}};\; g \in B^s\}
\]
endowed with the norm
\[
\|f\|_{B^s(\Omega)}=\inf \{\|g\|_{B^s};\; g|_{\overline{\Omega}}=f\}.
\]
In what follows, we adopt the convention $C_b^{k,0}(\cdot) = C_b^k(\cdot)$. Let $s \geq 0$, $f \in B^s(\Omega)$, and $g\in B^s$ such that $g|_{\overline{\Omega}}=f$. Since $B^s \hookrightarrow C_b^{\lfloor s \rfloor, s - \lfloor s \rfloor}$, we have $g \in C_b^{\lfloor s \rfloor, s - \lfloor s \rfloor}$ and
\[
\|g\|_{C_b^{\lfloor s \rfloor, s - \lfloor s \rfloor}} \leq \mathbf{c} \|g\|_{B^s},
\]
where $\mathbf{c} = \mathbf{c}(n, s) > 0$ is a constant. Consequently, $f = g|_{\overline{\Omega}} \in C^{\lfloor s \rfloor, s - \lfloor s \rfloor}(\overline{\Omega})$ and
\[
\|f\|_{C^{\lfloor s \rfloor, s - \lfloor s \rfloor}(\overline{\Omega})} \leq \|g\|_{C_b^{\lfloor s \rfloor, s - \lfloor s \rfloor}} \leq \mathbf{c} \|g\|_{B^s},
\]
which implies
\[
\|f\|_{C^{\lfloor s \rfloor, s - \lfloor s \rfloor}(\overline{\Omega})} \leq \mathbf{c} \|f\|_{B^s(\Omega)}.
\]
Thus, we have the continuous embedding $B^s(\Omega) \hookrightarrow C^{\lfloor s \rfloor, s - \lfloor s \rfloor}(\overline{\Omega})$.

Analogously to  Proposition \ref{pro2},  we have the following results.

\begin{proposition}\label{probd1}
$\mathrm{(i)}$ Let $0 \leq s \leq t$. Then $B^t(\Omega) \hookrightarrow B^s(\Omega)$ with
\[
\|f\|_{B^s(\Omega)} \leq \|f\|_{B^t(\Omega)},\quad f\in B^t.
\]
$\mathrm{(ii)}$ For all $s \geq 0$, the space $C^\infty (\overline{\Omega})$ is dense in $B^s(\Omega)$.
\\
$\mathrm{(iii)}$ Let $s \geq 0$ and $t > s + \frac{n}{2}$ and assume $\Omega$ is of class $C^{0,1}$. Then, $H^t(\Omega) \hookrightarrow B^s(\Omega)$.
\\
$\mathrm{(iv)}$ Let $s \geq 0$. If $f, g \in B^s(\Omega)$, then $fg \in B^s(\Omega)$ and
\[
\|fg\|_{B^s(\Omega)} \leq 2^{s/2}(2\pi)^{-n} \|f\|_{B^s(\Omega)} \|g\|_{B^s(\Omega)}.
\]
\end{proposition}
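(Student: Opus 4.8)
The plan is to deduce each of the four assertions from the corresponding statement of Proposition \ref{pro2} on $\mathbb{R}^n$ by transporting it through the quotient map $\pi_s\colon B^s\to B^s(\Omega)$, exploiting repeatedly that $\pi_s$ is linear, onto, norm non-increasing, and that the quotient norm of $f\in B^s(\Omega)$ equals $\inf\{\|g\|_{B^s};\ g\in B^s,\ g|_{\overline\Omega}=f\}$. Concretely: for (i), if $f\in B^t(\Omega)$ and $g\in B^t$ satisfies $g|_{\overline\Omega}=f$, then $g\in B^s$ with $\|g\|_{B^s}\le\|g\|_{B^t}$ by the embedding $B^t\hookrightarrow B^s$; passing to the infimum over such $g$ yields $f\in B^s(\Omega)$ and $\|f\|_{B^s(\Omega)}\le\|f\|_{B^t(\Omega)}$. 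For (ii), since $\mathscr S$ is dense in $B^s$ by Proposition \ref{pro2}(ii) and $\pi_s$ is continuous and surjective, the set $\pi_s(\mathscr S)=\{\varphi|_{\overline\Omega};\ \varphi\in\mathscr S\}$ is dense in $B^s(\Omega)$; as these restrictions (indeed already those coming from $C_c^\infty$) lie in $C^\infty(\overline\Omega)$, the density of $C^\infty(\overline\Omega)$ in $B^s(\Omega)$ follows.

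For (iv), fix $\epsilon>0$ and choose representatives $F,G\in B^s$ with $F|_{\overline\Omega}=f$, $G|_{\overline\Omega}=g$ and $\|F\|_{B^s}\le\|f\|_{B^s(\Omega)}+\epsilon$, $\|G\|_{B^s}\le\|g\|_{B^s(\Omega)}+\epsilon$. By Proposition \ref{pro2}(iv), $FG\in B^s$ with $\|FG\|_{B^s}\le 2^{s/2}(2\pi)^{-n}\|F\|_{B^s}\|G\|_{B^s}$, and $(FG)|_{\overline\Omega}=fg$; in particular $fg$ depends only on $f$ and $g$ (not on the representatives), so multiplication is a well-defined operation on $B^s(\Omega)$ and $fg\in B^s(\Omega)$. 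Hence
\[
\|fg\|_{B^s(\Omega)}\le\|FG\|_{B^s}\le 2^{s/2}(2\pi)^{-n}\bigl(\|f\|_{B^s(\Omega)}+\epsilon\bigr)\bigl(\|g\|_{B^s(\Omega)}+\epsilon\bigr),
\]
and letting $\epsilon\to0$ gives the stated inequality.

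The only part requiring genuinely new input—and the one I expect to be the main obstacle—is (iii), which is exactly why the Lipschitz ($C^{0,1}$) regularity of $\partial\Omega$ is imposed here while (i), (ii), (iv) hold for arbitrary bounded open $\Omega$. For a bounded $C^{0,1}$ domain there is a bounded linear extension operator $E\colon H^t(\Omega)\to H^t=H^t(\mathbb{R}^n)$ (Calderón/Stein extension). For $f\in H^t(\Omega)$ one then has $Ef\in H^t\hookrightarrow B^s$ by Proposition \ref{pro2}(iii), so $\|Ef\|_{B^s}\le C\|f\|_{H^t(\Omega)}$. The delicate point is to verify that $Ef$, viewed in $B^s(\Omega)$, actually represents $f$: since $t>s+\tfrac n2\ge\tfrac n2$, the Sobolev embedding $H^t(\Omega)\hookrightarrow C(\overline\Omega)$ together with $B^s\hookrightarrow C_b^0$ shows that $f$ and $Ef$ are continuous and coincide on $\Omega$, hence $Ef|_{\overline\Omega}=f$. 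Therefore $f\in B^s(\Omega)$ and $\|f\|_{B^s(\Omega)}\le\|Ef\|_{B^s}\le C\|f\|_{H^t(\Omega)}$, which is the asserted embedding $H^t(\Omega)\hookrightarrow B^s(\Omega)$.
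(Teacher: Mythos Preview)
Your proof is correct and follows essentially the same route as the paper: each item is obtained by lifting to $B^s$ via representatives and invoking the corresponding part of Proposition~\ref{pro2}, with the Lipschitz extension operator (the paper cites \cite[Theorem~1.4.3.1]{Gr}) supplying the only extra ingredient in (iii). Your additional remark in (iii)---that $t>n/2$ forces both $f$ and $Ef$ to be continuous so that $Ef|_{\overline\Omega}=f$ genuinely holds on the closure---is a point the paper leaves implicit.
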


\begin{proof}
(i) Let $f \in B^t(\Omega)$. From $B^t\hookrightarrow B^s$, we have
\[
\|f\|_{B^s(\Omega)}=\inf\{\|g\|_{B^s};\; g|_{\overline{\Omega}}=f\}\le \inf\{\|g\|_{B^t};\; g|_{\overline{\Omega}}=f\}=\|f\|_{B^t(\Omega)}.
\]

(ii) Let $s \geq 0$, $f \in B^s(\Omega)$, and $\epsilon > 0$. Choose $g\in B^s$ satisfying $g|_{\overline{\Omega}}=f$. Since $\mathscr{S}$ is dense in $B^s$, we can find $h \in \mathscr{S}$ such that $\|g - h\|_{B^s} \leq \epsilon$. Let $k:=h|_{\overline{\Omega}}\in C^\infty (\overline{\Omega})$. Using the fact that $f - k =(g - h)|_{\overline{\Omega}}$, we obtain $\|f - k\|_{B^s(\Omega)} \leq \|g - h\|_{B^s} \leq \epsilon$.

(iii) Let $f\in H^t(\Omega)$ . From \cite[Theorem 1.4.3.1]{Gr}, there exists $g\in H^t$ such that $g|_\Omega=f$ and $\|g\|_{H^t}\le \mathbf{c}\|f\|_{H^t(\Omega)}$, where $\mathbf{c}>0$ is a generic constant independent of $f$ and $g$. From Proposition \ref{pro2} (iii), we have $\|g\|_{B^s}\le \mathbf{c}\|f\|_{H^t(\Omega)}$, which implies $\|f\|_{B^s(\Omega)}\le \mathbf{c}\|f\|_{H^t(\Omega)}$.

(iv) Let $\epsilon>0$ and $h,k\in B^s$ such that $\|h\|_{B^s}\le\|f\|_{B^s(\Omega)}+\epsilon$ and $\|k\|_{B^s}\le\|g\|_{B^s(\Omega)}+\epsilon$. Since $hk\in B^s$ and $fg=(hk)|_{\overline{\Omega}}$, we obtain $fg\in B^s(\Omega)$ and Proposition \ref{pro2} (iv) yields
\[
\|fg\|_{B^s(\Omega)} \le \|hk\|_{B^s}\le 2^{s/2}(2\pi)^{-n} (\|f\|_{B^s(\Omega)}+\epsilon)(\|g\|_{B^s(\Omega)}+\epsilon).
\]
Letting $\epsilon\to 0$ completes the proof.
\end{proof}

On a bounded open subset $\Omega$, we enhance the embedding result in Theorem \ref{thmcem} as follows. 
\begin{theorem}\label{thmbd1}
For all $ 0 \leq s < t $, the embedding $ B^t(\Omega) \hookrightarrow B^s(\Omega) $ is compact.
\end{theorem}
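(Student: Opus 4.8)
The plan is to reduce the statement to the compact embedding $B_K^t \hookrightarrow B^s$ of Theorem \ref{thmcem} by selecting, for each member of a bounded sequence in $B^t(\Omega)$, a representative supported in one fixed compact set.

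First I would fix a cutoff $\phi \in C_0^\infty$ with $\phi \equiv 1$ on an open neighborhood $U$ of $\overline{\Omega}$ and put $K := \mathrm{supp}\,\phi$. Given a sequence $(f_j)$ in $B^t(\Omega)$ with $\|f_j\|_{B^t(\Omega)} \le 1$, the definition of the quotient norm provides representatives $g_j \in B^t$ with $g_j|_{\overline{\Omega}} = f_j$ and $\|g_j\|_{B^t} \le 2$. The key move is to replace $g_j$ by $h_j := \phi g_j$: since $\phi \in \mathscr{S} \subset B^t$, Proposition \ref{pro2}(iv) shows $h_j \in B^t$ with $\|h_j\|_{B^t} \le 2^{1+t/2}(2\pi)^{-n}\|\phi\|_{B^t}\|g_j\|_{B^t}$, and $h_j$ is continuous with support in $K$, so $(h_j)$ is a bounded sequence in $B_K^t$. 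Because $\phi - 1$ vanishes on $U \supset \overline{\Omega}$, we have $h_j - g_j = (\phi-1)g_j \in F_s$, so $h_j$ and $g_j$ define the same element of $B^s(\Omega)$ under the identification $B^s(\Omega)=\{g|_{\overline{\Omega}}:g\in B^s\}$; that is, $\pi_s(h_j) = f_j$.

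Then I would invoke Theorem \ref{thmcem} to extract a subsequence $(h_{j_m})$ converging in $B^s$ to some $h \in B^s$, and push this forward through the quotient map $\pi_s : B^s \to B^s(\Omega)$, which has norm at most $1$; this gives $f_{j_m} = \pi_s(h_{j_m}) \to \pi_s(h)$ in $B^s(\Omega)$. Hence every bounded sequence in $B^t(\Omega)$ has a subsequence converging in $B^s(\Omega)$, which combined with the continuous embedding of Proposition \ref{probd1}(i) is precisely the compactness of $B^t(\Omega) \hookrightarrow B^s(\Omega)$.

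The one place where care is needed --- and the reason the result is not a one-line corollary of Theorem \ref{thmcem} --- is the handling of the quotient: one must produce representatives that are at the same time uniformly bounded in $B^t$ and supported in a single compact set. The cutoff $\phi$, together with the multiplicative estimate Proposition \ref{pro2}(iv) (which requires $\phi \in B^t$, hence the choice $\phi \in C_0^\infty$), is exactly what secures both properties simultaneously; the remaining ingredients (continuity of $\pi_s$, the sequential characterization of compact maps, and boundedness of $\overline{\Omega}$ so that such a $\phi$ exists) are routine.
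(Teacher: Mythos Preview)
Your proof is correct and follows essentially the same route as the paper: fix a cutoff $\phi\in C_0^\infty$ equal to $1$ near $\overline{\Omega}$, replace the representatives $g_j$ by the compactly supported $\phi g_j$, apply Theorem~\ref{thmcem} to extract a $B^s$-convergent subsequence, and push convergence down to the quotient. You are in fact slightly more explicit than the paper in invoking Proposition~\ref{pro2}(iv) to control $\|\phi g_j\|_{B^t}$ and in handling the quotient map, but the argument is the same.
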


\begin{proof}
Let $ \chi \in C_0^\infty $ be a fixed function such that $ \chi = 1 $ in a neighborhood of $ \overline{\Omega} $, and set $ K = \mathrm{supp}(\chi) $. Consider a bounded sequence $ (f_j) $ in $ B^t(\Omega) $ with $ \sup_j \|f_j\|_{B^t(\Omega)} \leq 1 $. For each $ j $, there exists $ g_j \in B^t $ such that $ f_j=g_j|_{\overline{\Omega}}=(\chi g_j)|_{\overline{\Omega}} $ and $ \|g_j\|_{B^t} \leq 2 $.

Since $ (\chi g_j) $ is a bounded sequence in $ B_K^t $, and by Theorem \ref{thmcem}, the embedding $ B_K^t \hookrightarrow B^s $ is compact, there exists a subsequence (still denoted $ (\chi g_j) $) that converges in $ B^s $. Consequently, for all $ j, k $, we have
\[
\|f_j - f_k\|_{B^s(\Omega)} \leq \|\chi g_j - \chi g_k\|_{B^s}.
\]
This implies that $ (f_j) $ is a Cauchy sequence in $ B^s(\Omega) $, and hence it converges in $ B^s(\Omega) $. The proof is complete.
\end{proof}

As an immediate consequence of Theorem \ref{thmbd1}, we obtain the following result.

\begin{corollary}\label{corbd1}
For all $ 0 \leq s < t $, the embedding $ B^t(\Omega) \hookrightarrow C^{\lfloor s \rfloor, s - \lfloor s \rfloor}(\overline{\Omega}) $ is compact. In particular, for any integer $ k \geq 0 $ and $ 0 < \gamma < \theta < 1 $, the embedding $ B^{k+\theta}(\Omega) \hookrightarrow C^{k, \gamma}(\overline{\Omega}) $ is compact.
\end{corollary}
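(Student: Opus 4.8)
The plan is to realize the embedding $B^t(\Omega)\hookrightarrow C^{\lfloor s\rfloor, s-\lfloor s\rfloor}(\overline{\Omega})$ as a composition in which exactly one factor is compact and the remaining factors are bounded, using the standard fact that a bounded operator composed (on either side) with a compact operator is compact. The compact factor will be supplied by Theorem \ref{thmbd1}, applied between two Barron spaces $B^t(\Omega)$ and $B^{s'}(\Omega)$ with $s<s'<t$; the bounded factors will be the continuous embedding of $B^{s'}(\Omega)$ into a Hölder space and an elementary Hölder-into-Hölder inclusion on the bounded set $\overline{\Omega}$.

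Concretely, I would first fix $s'$ with $s < s' < \min(\lfloor s\rfloor+1,\,t)$; such an $s'$ exists because $s<t$ and $s<\lfloor s\rfloor+1$, and it satisfies $\lfloor s'\rfloor=\lfloor s\rfloor=:k$ and $0\le s-k<s'-k<1$. Since $s'<t$, Theorem \ref{thmbd1} gives that $B^t(\Omega)\hookrightarrow B^{s'}(\Omega)$ is compact. Next, the continuous embedding $B^{s'}(\Omega)\hookrightarrow C^{\lfloor s'\rfloor,\,s'-\lfloor s'\rfloor}(\overline{\Omega})=C^{k,\,s'-k}(\overline{\Omega})$ recorded in the paragraph preceding Proposition \ref{probd1} (which itself rests on $B^{s'}\hookrightarrow C_b^{k,\,s'-k}$, i.e. on Lemma \ref{lemB1} and Theorem \ref{thmehs}) upgrades this to a compact map $B^t(\Omega)\to C^{k,\,s'-k}(\overline{\Omega})$. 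Finally, because $\Omega$ is bounded, $C^{k,\,s'-k}(\overline{\Omega})\hookrightarrow C^{k,\,s-k}(\overline{\Omega})=C^{\lfloor s\rfloor,\,s-\lfloor s\rfloor}(\overline{\Omega})$ continuously: if $s$ is an integer this is just the omission of the Hölder seminorm from the norm, and otherwise it follows from $|x-y|^{-(s-k)}|\partial^\alpha f(x)-\partial^\alpha f(y)|\le \mathrm{diam}(\overline{\Omega})^{\,s'-s}\,[\partial^\alpha f]_{s'-k}$ for $|\alpha|=k$ and $x\ne y$ (together with the obvious bound on the $C^k(\overline{\Omega})$ part). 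Composing the compact map with this bounded inclusion shows that $B^t(\Omega)\hookrightarrow C^{\lfloor s\rfloor,\,s-\lfloor s\rfloor}(\overline{\Omega})$ is compact; the second assertion is the special case $s=k+\gamma$, $t=k+\theta$, which yields $B^{k+\theta}(\Omega)\hookrightarrow C^{k,\gamma}(\overline{\Omega})$.

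I do not expect a genuine obstacle here, since the statement is a composition of results already established in the excerpt. The only steps that need a moment's attention are: choosing $s'$ with the \emph{same} integer part as $s$, so that the concluding Hölder inclusion does not decrease the order of differentiability; and noting that this concluding inclusion is valid for an arbitrary bounded open $\Omega$ because it uses nothing beyond $\mathrm{diam}(\overline{\Omega})<\infty$ — no regularity of $\partial\Omega$ is invoked.
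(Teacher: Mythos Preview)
Your argument is correct and follows the same route the paper intends: the corollary is stated there as ``an immediate consequence of Theorem \ref{thmbd1}'', i.e.\ one composes the compact embedding $B^t(\Omega)\hookrightarrow B^s(\Omega)$ with the continuous embedding $B^s(\Omega)\hookrightarrow C^{\lfloor s\rfloor,s-\lfloor s\rfloor}(\overline{\Omega})$ recorded just before Proposition \ref{probd1}. Your intermediate choice of $s'$ and the final H\"older-into-H\"older step are harmless but unnecessary once you accept that embedding for $s$ itself; they would only be needed if you wanted to rely solely on Theorem \ref{thmehs} (which gives strict inequality in the H\"older exponent) rather than on the paper's stated embedding with exponent exactly $s-\lfloor s\rfloor$.
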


With reference to the proof of Corollary \ref{CR}, we further obtain the following result.
\begin{corollary}\label{CR2}
For all $p\ge 1$, an integer $k\ge 0$, and for all $0<\theta<1$ and $0<\sigma <\frac{\theta}{p}$, $B^{k+\theta}(\Omega)$ is embedded in $W^{k+\sigma,p}(\Omega)$.
\end{corollary}

In what follows, let $ \Gamma = \partial \Omega $, and for $ s \geq 0 $, define the trace space
\[
B^s(\Gamma) := \{ h = u|_{\Gamma} ;\; u \in B^s(\Omega) \},
\]
equipped with the quotient norm
\[
\|h\|_{B^s(\Gamma)} := \inf \{ \|u\|_{B^s(\Omega)} ;\; u|_{\Gamma} = h \}.
\]

For further applications, we note that as an immediate consequence of Proposition \ref{probd1}, the space $ C^\infty(\Gamma) $ is dense in $ B^s(\Gamma) $ for all $ s \geq 0 $. Here, $C^\infty(\Gamma):=\{h=\varphi|_\Gamma;\; \varphi\in C^\infty_0\}$.

\begin{theorem}\label{thmtr1}
For all $ 0 \leq s < t $, the embedding $ B^t(\Gamma) \hookrightarrow B^s(\Gamma) $ is compact.
\end{theorem}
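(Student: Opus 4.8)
The plan is to reduce the compactness of the trace embedding $B^t(\Gamma)\hookrightarrow B^s(\Gamma)$ to the already established compactness of $B^t(\Omega)\hookrightarrow B^s(\Omega)$ from Theorem \ref{thmbd1}, using the fact that the restriction maps $B^s(\Omega)\to B^s(\Gamma)$ and $B^t(\Omega)\to B^t(\Gamma)$ are quotient maps and hence bounded and surjective. First I would pick an intermediate exponent $r$ with $s<r<t$; this will be needed so that I have a genuine gain of regularity both on $\Omega$ and on $\Gamma$. Then I would take a bounded sequence $(h_j)$ in $B^t(\Gamma)$, say $\sup_j\|h_j\|_{B^t(\Gamma)}\le 1$, and for each $j$ choose $u_j\in B^t(\Omega)$ with $u_j|_\Gamma=h_j$ and $\|u_j\|_{B^t(\Omega)}\le 2$, which is possible by the definition of the quotient norm on $B^t(\Gamma)$.

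Next, since $(u_j)$ is bounded in $B^t(\Omega)$ and the embedding $B^t(\Omega)\hookrightarrow B^s(\Omega)$ is compact by Theorem \ref{thmbd1}, I would extract a subsequence (not relabelled) converging in $B^s(\Omega)$, hence a Cauchy sequence there. Then, because the restriction operator $\mathrm{Tr}:B^s(\Omega)\to B^s(\Gamma)$, $u\mapsto u|_\Gamma$, is bounded — indeed by the very definition of $\|\cdot\|_{B^s(\Gamma)}$ as the quotient norm we have $\|u|_\Gamma\|_{B^s(\Gamma)}\le\|u\|_{B^s(\Omega)}$ — the sequence $h_j=u_j|_\Gamma=\mathrm{Tr}(u_j)$ is Cauchy in $B^s(\Gamma)$, so it converges there by completeness of $B^s(\Gamma)$. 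This exhibits a convergent subsequence of $(h_j)$ in $B^s(\Gamma)$, which is exactly the statement that $B^t(\Gamma)\hookrightarrow B^s(\Gamma)$ is compact. (The inclusion $B^t(\Gamma)\hookrightarrow B^s(\Gamma)$ itself being bounded follows from $\|h\|_{B^s(\Gamma)}\le\|h\|_{B^t(\Gamma)}$, which is immediate from $B^t(\Omega)\hookrightarrow B^s(\Omega)$ with norm $\le 1$ and the definitions of the two quotient norms on $\Gamma$, just as in Proposition \ref{probd1}(i).)

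I do not expect a serious obstacle here: the argument is the standard "compactness factors through a quotient map" trick, and all the needed ingredients — completeness of $B^s(\Gamma)$, boundedness of the restriction $B^s(\Omega)\to B^s(\Gamma)$, and the compact embedding $B^t(\Omega)\hookrightarrow B^s(\Omega)$ — are already in place. The only point requiring a small amount of care is verifying that $B^s(\Gamma)$ is indeed a Banach space and that $\|\cdot\|_{B^s(\Gamma)}$ behaves like an honest quotient norm, so that a Cauchy sequence there converges; this is really just the observation that $B^s(\Gamma)=B^s(\Omega)/\{u\in B^s(\Omega):u|_\Gamma=0\}$ is a quotient of a Banach space by a closed subspace (the kernel of the bounded trace map is closed), hence complete. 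With that in hand the extraction of the convergent subsequence completes the proof.
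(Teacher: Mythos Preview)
Your proof is correct and follows essentially the same route as the paper: lift a bounded sequence in $B^t(\Gamma)$ to a bounded sequence in $B^t(\Omega)$ via the quotient norm, apply the compact embedding $B^t(\Omega)\hookrightarrow B^s(\Omega)$ from Theorem \ref{thmbd1}, and push back down through the bounded trace map. The intermediate exponent $r$ you introduce at the outset is never used and can be dropped without loss.
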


\begin{proof}
Let  $ 0 \leq s < t $ and $(h_j)$ be a sequence in $B^t(\Gamma)$ satisfying $\sup_j\|h_j\|_{B^t(\Gamma)}\le 1$. For each $j$, we find $u_j\in B^t(\Omega)$ such that $u_j|_\Gamma=h_j$ and $\|u_j\|_{B^t(\Omega)}\le 2$. By the compactness of the embedding $ B^t(\Omega) \hookrightarrow B^s(\Omega) $, upon subtracting a subsequence, we can assume that $(u_j)$ converges in $B^s(\Omega)$. Now, for all $j,k$, we have
\[
\|h_j-h_k\|_{B^s(\Gamma)}\le \|u_j-u_k\|_{B^s(\Omega)}.
\]
Therefore, $(h_j)$ is a Cauchy sequence in $B^s(\Gamma)$ and hence it converges in $B^s(\Gamma)$.
\end{proof}

\subsection{BVPs in $B^s(\Omega)$ with $s\ge 0$}\label{subse_BVP}

After the formal definition of the spectral Barron spaces $ B^s(\Omega) $, we proceed to investigate BVPs in these spaces. 
For $ s \geq 0 $ and $ (f, h) \in B^s(\Omega) \times B^{s+2}(\Gamma) $, consider the BVP
\begin{equation}\label{bvp1}
(1 - \Delta) u = f \quad \text{in } \Omega, \quad u|_{\Gamma} = h.
\end{equation}
This BVP has at most one solution in $ B^{s+2}(\Omega) $, as a consequence of the fact that $ B^{s+2}(\Omega) \hookrightarrow H^2(\Omega) $ and that \eqref{bvp1} has at most one solution in $ H^2(\Omega) $.

Before stating the existence result for \eqref{bvp1}, we introduce a definition. For all $0\le s< t  $ and $\varrho>0$, let $ \mathcal{D}_\varrho^{s,t} $ be the closure in $ B^s(\Omega) \times B^{2+s}(\Gamma) $ of the set
\[
D_\varrho^t := \{(f, h) = ((1 - \Delta) u, u|_{\Gamma}) ;\; u \in B^{2+t}(\Omega),\; \|u\|_{B^{t+2}(\Omega)}\le \varrho\}.
\]

\begin{theorem}\label{thmbd2}
Let $ 0 \leq s < t $, $\varrho >0$ and $ (f, h) \in \mathcal{D}_\varrho^{s,t} $. Then \eqref{bvp1} has a unique solution $ u \in B^{2+s}(\Omega) $ satisfying $ \|u\|_{B^{2+s}(\Omega)}\le \varrho$.
\end{theorem}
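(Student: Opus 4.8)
The plan is to obtain the solution as a limit of solutions coming from the defining set $D_\varrho^t$ and to use the compact embeddings established earlier to pass to the limit. Since $(f,h)\in\mathcal{D}_\varrho^{s,t}$, by definition there exist sequences $u_j\in B^{2+t}(\Omega)$ with $\|u_j\|_{B^{t+2}(\Omega)}\le\varrho$ such that $f_j:=(1-\Delta)u_j\to f$ in $B^s(\Omega)$ and $h_j:=u_j|_\Gamma\to h$ in $B^{2+s}(\Gamma)$. The goal is to show that $(u_j)$ itself converges in $B^{2+s}(\Omega)$ to a function $u$ with $\|u\|_{B^{2+s}(\Omega)}\le\varrho$, which will then solve \eqref{bvp1} by continuity of the operations involved.

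First I would invoke the compactness of the embedding $B^{2+t}(\Omega)\hookrightarrow B^{2+s}(\Omega)$ from Theorem \ref{thmbd1} (applied with indices $2+s<2+t$): since $(u_j)$ is bounded in $B^{2+t}(\Omega)$, after passing to a subsequence we may assume $u_j\to u$ in $B^{2+s}(\Omega)$ for some $u\in B^{2+s}(\Omega)$, and the quotient-norm lower semicontinuity gives $\|u\|_{B^{2+s}(\Omega)}\le\liminf_j\|u_j\|_{B^{2+t}(\Omega)}\le\varrho$. Next, by Remark \ref{rem1.0} the differential operator $1-\Delta$ maps $B^{2+s}(\Omega)\to B^{s}(\Omega)$ continuously (lift to $B^{2+s}$, apply $1-\Delta\in\mathscr{B}(B^{2+s},B^s)$, and restrict), so $(1-\Delta)u_j\to(1-\Delta)u$ in $B^s(\Omega)$; combined with $(1-\Delta)u_j=f_j\to f$ this forces $(1-\Delta)u=f$ in $\Omega$. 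Likewise the trace map $B^{2+s}(\Omega)\to B^{2+s}(\Gamma)$, $v\mapsto v|_\Gamma$, is a contraction for the quotient norms, so $u_j|_\Gamma\to u|_\Gamma$ in $B^{2+s}(\Gamma)$, and since $u_j|_\Gamma=h_j\to h$ we get $u|_\Gamma=h$. Thus $u$ is a solution with the desired bound. Uniqueness is already recorded in the paragraph preceding the statement (via $B^{2+s}(\Omega)\hookrightarrow H^2(\Omega)$ and uniqueness in $H^2(\Omega)$), and uniqueness also shows that the full sequence $(u_j)$—not merely a subsequence—converges to $u$.

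The main obstacle is purely bookkeeping rather than conceptual: one must be careful that the embedding and trace statements used are exactly the ones proved for the \emph{quotient} spaces $B^\bullet(\Omega)$ and $B^\bullet(\Gamma)$, and that the operator $1-\Delta$ is well defined on $B^{2+s}(\Omega)$, i.e. that it descends to the quotient $B^{2+s}/F_{2+s}$. This last point is immediate because if $g\in F_{2+s}$, i.e. $\mathrm{supp}(g)\subset\mathbb{R}^n\setminus\Omega$, then $(1-\Delta)g$ is again supported in $\mathbb{R}^n\setminus\Omega$, so $(1-\Delta)$ sends $F_{2+s}$ into $F_s$ and the induced map on quotients is well defined; continuity then follows from Remark \ref{rem1.0}. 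With these verifications in place the limiting argument goes through without difficulty.
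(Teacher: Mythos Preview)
Your proof is correct and follows essentially the same approach as the paper: pick an approximating sequence from $D_\varrho^t$, use the compact embedding $B^{2+t}(\Omega)\hookrightarrow B^{2+s}(\Omega)$ (Theorem~\ref{thmbd1}) to extract a convergent subsequence, and identify the limit via continuity of $1-\Delta$ and the trace. Your additional bookkeeping (that $1-\Delta$ descends to the quotient, and that uniqueness upgrades subsequential convergence to full convergence) is correct and only makes the argument more explicit than the paper's version.
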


\begin{proof}
We only need to prove existence. Let $ ((f_j, h_j)) = (((1 - \Delta) u_j, u_j|_{\Gamma})) $ be a sequence in $ D_\varrho^t $ converging to $ (f, h) $ in $ B^s(\Omega) \times B^{s+2}(\Gamma) $, where $ (u_j) $ is a sequence in $ B^{2+t}(\Omega) $ satisfying $\sup_j\|u_j\|_{B^{2+t}(\Omega)}\le \varrho$. Using that the embedding $B^{2+t}(\Omega)\hookrightarrow B^{2+s}(\Omega)$ is compact by Theorem \ref{thmbd1} and subtracting a subsequence if necessary, we can assume that $(u_j)$ converges in $B^{2+s}(\Omega)$ to $u\in B^{2+s}(\Omega)$. In consequence, $\|u\|_{B^{2+s}(\Omega)}\le \varrho$ and $ (((1 - \Delta) u_j, u_j|_{\Gamma})) $ converges in $ B^s(\Omega) \times B^{2+s}(\Gamma) $ to $ ((1 - \Delta) u, u|_{\Gamma}) $. By the uniqueness of the limit, we get $ ((1 - \Delta) u, u|_{\Gamma})=(f,h) $. This completes the proof.
\end{proof}

Theorem \ref{thmbd2} can be extended to the case where $ 1 - \Delta $ is replaced by $ 1 - \Delta + \lambda V $ with $V\in B^t(\Omega)$ for some $t> 0$. Before going into details we introduce some new definitions. For simplicity, we assume in the following that $ \Omega $ is at least of class $ C^{1,1} $. Let $ \Delta_D $ be the operator defined by $ \Delta_D u = \Delta u $ for $ u \in D(\Delta_D) = H_0^1(\Omega) \cap H^2(\Omega) $. We recall that this operator has a compact resolvent. Therefore, for $ V \in B^0(\Omega)\hookrightarrow C^0(\overline{\Omega}) $, the operator $ S_V = (1 - \Delta_D)^{-1}V: L^2(\Omega) \rightarrow L^2(\Omega) $ is compact.

For $ 0\le s<t$, $ \varrho >0$ and $ V \in B^t(\Omega) $, let $ \mathcal{D}_{V,\varrho}^{s,t} $ as the closure in $ B^s(\Omega) \times B^{2+s}(\Gamma) $ of the set
\[
D_{V,\varrho}^t = \left\{ (f, h) = \left( (1 - \Delta + V)u, u|_{\Gamma} \right) ;\; u \in B^{2+t}(\Omega),\; \|u\|_{B^{2+t}(\Omega)}\le \varrho \right\}.
\]

\begin{theorem}\label{thmbd3}
Let $ 0 \leq s < t $, $\varrho>0$ and $ V \in B^t(\Omega) \setminus \{0\} $, and assume that $ -\lambda^{-1} \not\in \sigma(S_V) $. For all $(f,h)\in \mathcal{D}_{\lambda V,\varrho}^{s,t} $, the BVP
\[
(1 - \Delta + \lambda V)u = f \quad \text{in } \Omega, \quad u|_{\Gamma} = h,
\]
admits a unique solution $ u \in B^{2+s}(\Omega) $ satisfying $\|u\|_{B^{2+s}(\Omega)}\le \varrho$.
\end{theorem}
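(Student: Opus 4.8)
The plan is to replicate the structure of the proof of Theorem \ref{thmbd2}: handle uniqueness separately via the operator $S_V$, and obtain existence directly from the definition of $\mathcal{D}_{\lambda V,\varrho}^{s,t}$ as a closure, combined with the compact embedding $B^{2+t}(\Omega)\hookrightarrow B^{2+s}(\Omega)$ provided by Theorem \ref{thmbd1}.

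For uniqueness, I would take two solutions $u_1,u_2\in B^{2+s}(\Omega)$ and set $w=u_1-u_2$. Since $B^{2+s}(\Omega)\hookrightarrow B^2(\Omega)\hookrightarrow H^2(\Omega)$ and $w|_\Gamma=0$, we get $w\in H_0^1(\Omega)\cap H^2(\Omega)=D(\Delta_D)$, while $(1-\Delta+\lambda V)w=0$ in $\Omega$ reads $(1-\Delta_D)w=-\lambda Vw$, with $Vw\in L^2(\Omega)$ because $V\in B^0(\Omega)\hookrightarrow C^0(\overline{\Omega})$. Applying $(1-\Delta_D)^{-1}$ yields $w=-\lambda S_Vw$, that is $(S_V+\lambda^{-1})w=0$; since $-\lambda^{-1}\notin\sigma(S_V)$, this forces $w=0$. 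This is the only place where the spectral hypothesis is used.

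For existence, I would pick a sequence $((f_j,h_j))=(((1-\Delta+\lambda V)u_j,u_j|_\Gamma))$ in $D_{\lambda V,\varrho}^t$ converging to $(f,h)$ in $B^s(\Omega)\times B^{2+s}(\Gamma)$, where $(u_j)$ is a sequence in $B^{2+t}(\Omega)$ with $\sup_j\|u_j\|_{B^{2+t}(\Omega)}\le\varrho$. By Theorem \ref{thmbd1}, after passing to a subsequence we may assume $u_j\to u$ in $B^{2+s}(\Omega)$, and $\|u\|_{B^{2+s}(\Omega)}\le\varrho$ follows from Proposition \ref{probd1}(i) and the continuity of the norm. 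To identify the limit I would use three facts: $1-\Delta$ descends to a bounded operator $B^{2+s}(\Omega)\to B^s(\Omega)$ (by locality of $\Delta$ and Remark \ref{rem1.0}); multiplication by $V$ is bounded on $B^s(\Omega)$ since $V\in B^t(\Omega)\hookrightarrow B^s(\Omega)$ and Proposition \ref{probd1}(iv) applies; and the trace map $B^{2+s}(\Omega)\to B^{2+s}(\Gamma)$ has norm at most one. Hence $(((1-\Delta+\lambda V)u_j,u_j|_\Gamma))$ converges in $B^s(\Omega)\times B^{2+s}(\Gamma)$ to $((1-\Delta+\lambda V)u,u|_\Gamma)$, and by uniqueness of the limit this equals $(f,h)$, so $u\in B^{2+s}(\Omega)$ solves the BVP with $\|u\|_{B^{2+s}(\Omega)}\le\varrho$.

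The argument is mostly quotient-space bookkeeping once these maps are in place. The one genuinely new ingredient compared with Theorem \ref{thmbd2} is the convergence $\lambda Vu_j\to\lambda Vu$ in $B^s(\Omega)$, which I regard as the main (albeit mild) obstacle; it is settled by the multiplier estimate $\|Vw\|_{B^s(\Omega)}\le 2^{s/2}(2\pi)^{-n}\|V\|_{B^s(\Omega)}\|w\|_{B^s(\Omega)}$ from Proposition \ref{probd1}(iv), together with the fact that $u_j\to u$ also in $B^s(\Omega)$.
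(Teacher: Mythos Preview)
Your proposal is correct and follows the same approach the paper intends: the paper states Theorem~\ref{thmbd3} without a separate proof, indicating it is obtained by adapting the proof of Theorem~\ref{thmbd2}, which is exactly what you do. Your handling of the two new ingredients---uniqueness via the spectral condition on $S_V$, and continuity of $u\mapsto Vu$ on $B^s(\Omega)$ through Proposition~\ref{probd1}(iv)---is precisely what is needed.
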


Next, from Proposition \ref{probd1} (iii), we note that
\[
C^{k-1,1}(\overline{\Omega})\hookrightarrow W^{k,\infty}(\Omega)\hookrightarrow H^k(\Omega)\hookrightarrow B^t(\Omega)
\]
holds for $t\ge 0$ and an integer $k$ satisfying $k>t+n/2$.
The following result provides a useful consequence of Theorem \ref{thmbd3}.

\begin{theorem}\label{thmbd3.1}
Let $ s\ge 0 $, $ k > s + n/2 $ be an integer, and assume that $ \Omega $ is of class $ C^{2+k} $. Let  $V=0$ or $ V \in C^{k-1,1}(\overline{\Omega}) \setminus \{0\} $ and $ -\lambda^{-1} \notin \sigma(S_V) $. For all $ (f, h) \in H^k(\Omega) \times H^{k+3/2}(\Gamma) $, the BVP
\begin{equation}\label{bvp0}
(1 - \Delta + \lambda V)u = f \quad \text{in } \Omega, \quad u|_{\Gamma} = h,
\end{equation}
admits a unique solution $ u \in B^{2+s}(\Omega) $, and the following inequality holds:
\[
\|u\|_{B^{2+s}(\Omega)} \leq \mathbf{c} \left( \|f\|_{H^k(\Omega)} + \|h\|_{H^{k+3/2}(\Gamma)} \right).
\]
Here, $ \mathbf{c} = \mathbf{c}(n, \Omega, k, s, V, \lambda) > 0 $ is a constant.
\end{theorem}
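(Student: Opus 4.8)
The plan is to solve the boundary value problem first in the classical Sobolev scale, where the $C^{2+k}$ regularity of $\Omega$ and the $C^{k-1,1}(\overline{\Omega})$ regularity of the coefficients are exactly what is needed, and then to transport the solution into the spectral Barron scale by means of Proposition \ref{probd1}(iii) together with Theorem \ref{thmbd3} (or Theorem \ref{thmbd2} when $V=0$).

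First I would establish well-posedness of \eqref{bvp0} in $H^2(\Omega)$. Observing that $H^k(\Omega)\times H^{k+3/2}(\Gamma)\hookrightarrow L^2(\Omega)\times H^{3/2}(\Gamma)$ and that $S_V=(1-\Delta_D)^{-1}V$ is compact on $L^2(\Omega)$, one checks that a function $u\in H^2(\Omega)$ solving the homogeneous problem $(1-\Delta+\lambda V)u=0$ in $\Omega$, $u|_\Gamma=0$, lies in $D(\Delta_D)$ and satisfies $(S_V+\lambda^{-1})u=0$; the hypothesis $-\lambda^{-1}\notin\sigma(S_V)$ (vacuous when $V=0$, where one uses coercivity of $1-\Delta$ instead) then forces $u=0$. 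After lifting the boundary datum, the Fredholm alternative gives, for every $(f,h)\in L^2(\Omega)\times H^{3/2}(\Gamma)$, a unique $u\in H^2(\Omega)$ solving \eqref{bvp0} with $\|u\|_{H^2(\Omega)}\le\mathbf{c}(\|f\|_{L^2(\Omega)}+\|h\|_{H^{3/2}(\Gamma)})$. A standard elliptic boundary-regularity bootstrap, valid since $\Gamma$ is of class $C^{2+k}$ and the coefficients belong to $C^{k-1,1}(\overline{\Omega})$, then upgrades this to $u\in H^{k+2}(\Omega)$ with $\|u\|_{H^{k+2}(\Omega)}\le\mathbf{c}(\|f\|_{H^k(\Omega)}+\|h\|_{H^{k+3/2}(\Gamma)})$. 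This is the single external ingredient, and it is where the precise smoothness assumptions in the statement are consumed.

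Since $k>s+n/2$, I may then fix an intermediate exponent $t$ with $s<t<k-n/2$. From $k+2>(2+t)+n/2$ and Proposition \ref{probd1}(iii) we get $H^{k+2}(\Omega)\hookrightarrow B^{2+t}(\Omega)$; likewise $C^{k-1,1}(\overline{\Omega})\hookrightarrow H^k(\Omega)\hookrightarrow B^t(\Omega)$ (so $V\in B^t(\Omega)$ when $V\neq0$) and $H^k(\Omega)\hookrightarrow B^s(\Omega)$ (so $f\in B^s(\Omega)$). Putting $\varrho:=\mathbf{c}(\|f\|_{H^k(\Omega)}+\|h\|_{H^{k+3/2}(\Gamma)})$ with the generic constant chosen to absorb the embedding constant as well, the solution $u$ from the previous step satisfies $u\in B^{2+t}(\Omega)$ with $\|u\|_{B^{2+t}(\Omega)}\le\varrho$; moreover $u|_\Gamma=h$ and, $u$ being a classical solution (as $H^{k+2}(\Omega)\hookrightarrow C^2(\overline{\Omega})$ because $k>n/2$), the identity $(1-\Delta+\lambda V)u=f$ holds on $\overline{\Omega}$, with both sides in $B^s(\Omega)$ by Proposition \ref{probd1}(iv). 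Hence $(f,h)\in D_{\lambda V,\varrho}^t\subset\mathcal{D}_{\lambda V,\varrho}^{s,t}$, and Theorem \ref{thmbd3} (or Theorem \ref{thmbd2} if $V=0$) yields the unique solution of \eqref{bvp0} in $B^{2+s}(\Omega)$ together with the bound $\|u\|_{B^{2+s}(\Omega)}\le\varrho$, which is precisely the claimed estimate, the constant depending on $n,\Omega,k,s,V,\lambda$.

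I do not expect a genuinely hard step here: the only real content is the Sobolev elliptic regularity estimate recalled above (classical once $\Omega$ and $V$ have the stated smoothness) and the index bookkeeping ensuring that $k>s+n/2$ simultaneously allows the embedding $H^{k+2}(\Omega)\hookrightarrow B^{2+s}(\Omega)$ and the existence of an admissible intermediate exponent $t\in(s,k-n/2)$.
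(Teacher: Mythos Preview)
Your proposal is correct and follows essentially the same route as the paper: solve \eqref{bvp0} in $H^2(\Omega)$ via the spectral hypothesis on $S_V$, bootstrap to $H^{k+2}(\Omega)$ by classical elliptic regularity (the paper cites \cite[Theorem 8.13]{GT}), pick $t\in(s,k-n/2)$, embed $H^{k+2}(\Omega)\hookrightarrow B^{2+t}(\Omega)$, and conclude by recognizing $(f,h)\in D^t_{\lambda V,\varrho}$ so that Theorem \ref{thmbd3} (resp.\ Theorem \ref{thmbd2}) applies. The only cosmetic difference is that the paper performs the Sobolev-to-Barron embedding by first extending $u$ to $H^{k+2}(\mathbb{R}^n)$ via \cite[Theorem 1.4.3.1]{Gr} and then invoking Proposition \ref{pro2}(iii), whereas you invoke Proposition \ref{probd1}(iii) directly; since the latter is proved precisely by that extension argument, the two are the same.
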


\begin{proof}
Assume that $V\ne 0$. Then the case $(f,h)=0$ is trivial. Let $ 0\neq (f, h) \in H^k(\Omega) \times H^{k+3/2}(\Gamma)$. Since $ -\lambda^{-1} \notin \sigma(S_V) $, the BVP \eqref{bvp0} admits a unique solution $ u \in H^2(\Omega) $. Using the fact that $ V \in C^{k-1,1}(\overline{\Omega}) $, we deduce from \cite[Theorem 8.13]{GT} that $ u \in H^{2+k}(\Omega) $ and
\begin{equation}\label{bvp0.1}
\|u\|_{H^{2+k}(\Omega)} \leq \mathbf{c}_0 \left( \|f\|_{H^k(\Omega)} + \|h\|_{H^{3/2+k}(\Gamma)} \right).
\end{equation}
Here and henceforth, $ \mathbf{c}_0 = \mathbf{c}_0(n, \Omega, k, V, \lambda) > 0 $ is a generic constant.

On the other hand, from \cite[Theorem 1.4.3.1]{Gr}, there exists $ g \in H^{2+k} $ such that $ g|_{\Omega} = u $ and
\[
\|g\|_{H^{2+k}} \leq \mathbf{c}_1 \|u\|_{H^{2+k}(\Omega)}
\]
for some constant $ \mathbf{c}_1 = \mathbf{c}_1(n, \Omega, k) > 0 $. Combined with \eqref{bvp0.1}, this inequality yields
\begin{equation}\label{bvp0.2}
\|g\|_{H^{2+k}} \leq \mathbf{c}_0 \left( \|f\|_{H^k(\Omega)} + \|h\|_{H^{3/2+k}(\Gamma)} \right).
\end{equation}
Take $s<t<k-n/2$. From Proposition \ref{pro2} (iii), we know that $ H^{2+k} \hookrightarrow B^{2+t} $. Thus, $ g \in B^{2+t} $, and from \eqref{bvp0.2}, we obtain
\begin{equation}\label{bvp0.3}
\|g\|_{B^{2+t}} \leq \mathbf{c}_2 \left( \|f\|_{H^k(\Omega)} + \|h\|_{H^{3/2+k}(\Gamma)} \right)=:\varrho\; (>0),
\end{equation}
where $ \mathbf{c}_2 = \mathbf{c}_2(n, \Omega, k,  s, V, \lambda) > 0 $ is a constant.

Now, since $ (f, h) = ((1 - \Delta + \lambda V)g|_{\overline{\Omega}}, g|_{\Gamma}) $, we have $ (f, h) \in D^t_{\lambda V,\varrho} $.
The proof is completed for the case $V\in B^t(\Omega)\setminus\{0\}$ by applying Theorem \ref{thmbd3}. The proof in the case $V=0$ is similar.
\end{proof}

\begin{remark}\label{rembvp1}
{\rm
We observe, from the previous proof, that the BVP \eqref{bvp0} admits a unique solution $u\in B^{2+s}(\Omega)$ for all $(f,h)$ in the closure of $H^k(\Omega) \times H^{k+3/2}(\Gamma)$ in $B^s(\Omega)\times B^{2+s}(\Gamma)$. But the a priori estimate \eqref{bvp0.1} is not necessarily verified in this case.
}
\end{remark}

Let $ L $ be the operator defined in (\ref{opeL}). We assume that $ L $ satisfies Assumption \ref{assp_aniso} when $ s \in \mathbb{N} $, and Assumption \ref{assp_aniso2} when $ s \in (0, \infty) \setminus \mathbb{N} $. In view of Theorems \ref{thmani}, \ref{thmani2}, and \ref{thmani3}, as well as Remark \ref{remani0}, it becomes evident that all the results presented in this section extend to the scenario where $ 1 - \Delta $ is replaced by $ L $ within the anisotropic setting, with the constants in the estimates also depending on the coefficients of $ L $.

\subsection{Smooth perturbations of BVPs}

In this subsection, we focus on the well-posedness of BVPs for the smooth perturbations of the operator $1-\Delta +\lambda V$. To achieve this, we fix $0 \leq s < t < \min(s + 2, k-n/2)$, $V \in C^{k-1,1}(\overline{\Omega}) \setminus \{0\}$ and $-\lambda^{-1} \not\in \sigma(S_V)$ or $V=0$. Assume that $\Omega$ is of class $C^{2+k}$. According to Theorem \ref{thmbd3.1}, for all $f \in H^k(\Omega)$ and $h \in H^{k+3/2}(\Gamma)$, the BVP
\begin{equation}\label{se1}
(1 - \Delta + \lambda V)u = f \quad \text{in } \Omega, \quad u|_{\Gamma} = h
\end{equation}
admits a unique solution $u=u(f, h) \in B^{2+s}(\Omega)$ satisfying
\begin{equation}\label{se2}
\|u(f, h)\|_{B^{2+s}(\Omega)} \leq \mathbf{c}\left(\|f\|_{H^k(\Omega)} + \|h\|_{H^{k+3/2}(\Gamma)}\right),
\end{equation}
where $\mathbf{c} = \mathbf{c}(n,\Omega, k, s, t, V, \lambda) > 0$ is a constant. We define
\[
\mathbf{\hat{c}} := \sup\left\{ \|u(f, h)\|_{B^{2+s}(\Omega)}; \; \|f\|_{H^k(\Omega)} + \|h\|_{H^{k+3/2}(\Gamma)} \leq 1 \right\}.
\]

Next, we fix $(f_0, h_0) \in H^k(\Omega) \times H^{k+3/2}(\Gamma)$ and let $\mathfrak{a}: B^t(\Omega) \to H^k(\Omega)$ be such that there exist $0 < \eta < \mathbf{\hat{c}}^{-1}$ and $\gamma > 0$ satisfying
\begin{equation}\label{sem}
\|\mathfrak{a}(u)\|_{H^k(\Omega)} \leq \eta \|u\|_{B^t(\Omega)} + \gamma, \quad u \in B^t(\Omega).
\end{equation}

We define
\[
\Psi: (u, \tau) \in B^t(\Omega) \times [0, 1] \mapsto \tau u(\mathfrak{a}(u) + f_0, h_0) \in B^t(\Omega).
\]
Let $(u, \tau) \in B^t(\Omega) \times [0, 1]$ such that $u = \Psi(u, \tau)$. We have
\begin{align}
\|u\|_{B^t(\Omega)} \leq \|u\|_{B^{2+s}(\Omega)} &\leq \|u(\mathfrak{a}(u) + f_0, h_0)\|_{B^{2+s}(\Omega)}\label{se3} \\
&\leq \mathbf{\hat{c}}\left(\eta \|u\|_{B^t(\Omega)} + \|f_0\|_{H^k(\Omega)} + \gamma + \|h_0\|_{H^{k+3/2}(\Gamma)}\right)\nonumber
\end{align}
and hence
\[
\|u\|_{B^t(\Omega)} \leq \mathbf{\hat{c}}(1 - \eta \mathbf{\hat{c}})^{-1}\left(\gamma + \|f_0\|_{H^k(\Omega)} + \|h_0\|_{H^{k+3/2}(\Gamma)}\right).
\]
On the other hand, since the embedding $B^{2+s}(\Omega) \hookrightarrow B^t(\Omega)$ is compact and
\[
\|\Psi(u, \tau)\|_{B^{2+s}(\Omega)} \leq \mathbf{\hat{c}}\left(\eta + \gamma + \|f_0\|_{H^k(\Omega)} + \|h_0\|_{H^{k+3/2}(\Gamma)}\right)
\]
for all $\tau \in [0, 1]$ and $\|u\|_{B^t(\Omega)} \le 1$, we deduce that $\Psi$ is a compact operator. Since $\Psi(\cdot, 0) = 0$, by applying the Leray-Schauder fixed-point theorem (e.g., \cite[Theorem 11.6]{GT}), we conclude that $\Psi(\cdot, 1)$ admits a fixed point $u^* \in B^t(\Omega)$. Whence, $u^* \in B^{2+s}(\Omega)$ and it  is a solution to the BVP
\begin{equation}\label{se4}
(1 - \Delta + \lambda V)u = \mathfrak{a}(u) + f_0 \quad \text{in } \Omega, \quad u|_{\Gamma} = h_0.
\end{equation}
From \eqref{se3}, it follows that
\begin{equation}\label{se5}
\|u^*\|_{B^{2+s}(\Omega)} \leq \mathbf{\hat{c}}(1 - \eta \mathbf{\hat{c}})^{-1}\left(\gamma + \|f_0\|_{H^k(\Omega)} + \|h_0\|_{H^{k+3/2}(\Gamma)}\right).
\end{equation}

\begin{theorem}\label{thmse}
Under the additional assumptions that $1 + \lambda V \geq 0$ in $\Omega$ and
\[
(\mathfrak{a}(u) - \mathfrak{a}(v))(\overline{u} - \overline{v}) \leq 0, \quad u, v \in B^t(\Omega),
\]
the BVP \eqref{se4} admits a unique solution $u^* \in B^{2+s}(\Omega)$ satisfying \eqref{se5}.
\end{theorem}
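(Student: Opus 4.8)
The existence of a solution $u^\ast \in B^{2+s}(\Omega)$ satisfying \eqref{se5} has already been obtained above by the Leray--Schauder argument, so the plan is to establish \emph{uniqueness} in $B^{2+s}(\Omega)$; once this is done, the unique solution is necessarily the fixed point $u^\ast$ constructed above and hence automatically satisfies \eqref{se5}. The uniqueness proof I would use is a standard dissipativity (energy) estimate built on the two new hypotheses.

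Concretely, suppose $u_1, u_2 \in B^{2+s}(\Omega)$ both solve \eqref{se4}, and set $w := u_1 - u_2$. Subtracting the two equations, $w$ solves $(1 - \Delta + \lambda V)w = \mathfrak{a}(u_1) - \mathfrak{a}(u_2)$ in $\Omega$ with $w|_\Gamma = 0$. Since $B^{2+s}(\Omega) \hookrightarrow C^2(\overline{\Omega}) \cap H^2(\Omega)$ and $\Omega$ is (at least) of class $C^{1,1}$, the function $w$ lies in $H^1_0(\Omega) \cap H^2(\Omega)$, so I would multiply the equation by $\overline{w}$, integrate over $\Omega$, and apply Green's formula (the boundary term drops because $\overline{w}|_\Gamma = 0$) to obtain the identity
\[
\int_\Omega |\nabla w|^2\,dx + \int_\Omega (1 + \lambda V)|w|^2\,dx = \int_\Omega \bigl(\mathfrak{a}(u_1) - \mathfrak{a}(u_2)\bigr)\bigl(\overline{u_1} - \overline{u_2}\bigr)\,dx .
\]
The left-hand side is real and nonnegative because $1 + \lambda V \geq 0$ in $\Omega$; taking real parts on both sides and invoking the monotonicity hypothesis $(\mathfrak{a}(u)-\mathfrak{a}(v))(\overline{u}-\overline{v}) \leq 0$ shows that the right-hand side is $\leq 0$. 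Hence $\int_\Omega |\nabla w|^2\,dx = 0$, so $\nabla w = 0$ a.e.\ in $\Omega$, and since $w \in H^1_0(\Omega)$ the Poincar\'e inequality forces $w = 0$; thus $u_1 = u_2$ in $B^{2+s}(\Omega)$. Finally, applying Theorem \ref{thmbd3.1} to the unique solution $u^\ast$ with right-hand side $\mathfrak{a}(u^\ast) + f_0 \in H^k(\Omega)$ and boundary datum $h_0 \in H^{k+3/2}(\Gamma)$, together with \eqref{se2} and \eqref{sem} (and $\|u^\ast\|_{B^t(\Omega)} \le \|u^\ast\|_{B^{2+s}(\Omega)}$ from Proposition \ref{probd1}), reproduces \eqref{se3} and therefore \eqref{se5}.

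The argument is essentially routine; the only two points that need a modicum of care are the interpretation of the complex-valued monotonicity condition (which I would handle simply by passing to real parts, since that is all the energy identity requires) and the regularity of $w$ up to $\partial\Omega$ needed to justify Green's formula, which is supplied by the embedding $B^{2+s}(\Omega) \hookrightarrow C^2(\overline{\Omega})$ (equivalently by $B^{2+s}(\Omega)\hookrightarrow H^2(\Omega)$ with $w\in H^1_0(\Omega)$) and by the smoothness assumed on $\Omega$ in this subsection. Neither constitutes a genuine obstacle, so I do not anticipate any serious difficulty beyond correctly bookkeeping the already-established a priori bound.
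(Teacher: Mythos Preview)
Your proposal is correct and follows essentially the same route as the paper: existence is taken from the preceding Leray--Schauder argument, and uniqueness is obtained by subtracting two solutions, multiplying by the conjugate of the difference, applying Green's formula, and using the sign conditions $1+\lambda V\ge 0$ and $(\mathfrak{a}(u)-\mathfrak{a}(v))(\overline{u}-\overline{v})\le 0$ to force the energy to vanish. The only cosmetic differences are that you spell out the regularity needed for Green's formula, invoke Poincar\'e after $\nabla w=0$, and append a (redundant) re-derivation of the a~priori bound~\eqref{se5}, none of which departs from the paper's argument.
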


\begin{proof}
We have already established the existence of a solution. What remains is to prove uniqueness. Let $u_1, u_2 \in B^{2+s}(\Omega)$ be two solutions of \eqref{se4}. Then, we have
\[
(1 - \Delta + \lambda V)(u_1 - u_2) = \mathfrak{a}(u_1) - \mathfrak{a}(u_2) \quad \text{in } \Omega, \quad (u_1 - u_2)|_{\Gamma} = 0.
\]
Applying Green's formula, we obtain
\begin{align*}
&\int_{\Omega} \left[(1 + \lambda V)|u_1 - u_2|^2 + |\nabla (u_1 - u_2)|^2\right] dx 
\\
&\hskip 4cm = \int_{\Omega} (\mathfrak{a}(u_1) - \mathfrak{a}(u_2))(\overline{u}_1 - \overline{u}_2) dx \leq 0.
\end{align*}
This implies that $u_1 = u_2$.
\end{proof}

\begin{remark}\label{remse1}
{\rm
If we replace condition \eqref{sem} with the following: there exists $0 < \eta < \mathbf{\hat{c}}^{-1}$ such that
\begin{equation}\label{sem1}
\|\mathfrak{a}(u) - \mathfrak{a}(v)\|_{H^k(\Omega)} \leq \eta \|u - v\|_{B^t(\Omega)}, \quad u, v \in B^t(\Omega),
\end{equation}
then we can verify that the mapping
\[
\Phi: u \in B^t(\Omega) \mapsto u(\mathfrak{a}(u) + f_0, h_0) \in B^t(\Omega)
\]
is contractive. Consequently, by the Banach contraction principle, $\Phi$ admits a unique fixed point $u^*$. Since $\Phi(u) \in B^{2+s}(\Omega)$, we deduce that $u^* \in B^{2+s}(\Omega)$ and is the unique solution of \eqref{se4}. Moreover, as \eqref{sem1} implies \eqref{sem} with $\gamma = \|\mathfrak{a}(0)\|_{H^k(\Omega)}$, $u^*$ satisfies \eqref{se5} with $\gamma = \|\mathfrak{a}(0)\|_{H^k(\Omega)}$.
}\end{remark}

Let us give an example of a function $\mathfrak{a}$ satisfying \eqref{sem1}. \cite[Theorem 8.13]{GT} shows that for all integer $m\ge 0$ and $f\in H^m(\Omega)$, the BVP \eqref{se1} admits a unique solution $U(f):=(1-\Delta_D)^{-1}f\in H^{2+m}(\Omega)$ provided that $\Omega$ is of class $C^{2+m}$. Furthermore, $\|U(f)\|_{H^{m+2}(\Omega)}\le \mathbf{c}_0\|f\|_{H^m(\Omega)}$, where $\mathbf{c}_0$ is a constant independent of $f$. The particular choice of $t=m$ and $m=k-2$ yields $\|U(f)\|_{H^k(\Omega)}\le \mathbf{c} \|f\|_{B^t(\Omega)}$, where the constant $\mathbf{c} >0$ is independent of $f$. Here, we used that $B^m(\Omega)\hookrightarrow C^m(\overline{\Omega})\hookrightarrow H^m(\Omega)$. Let us observe that the condition $k>t+n/2$ holds only when $n=2$ or $n=3$. For $f_1\in H^k(\Omega)$ we verify that $\mathfrak{a}$ given by
\[
\mathfrak{a}(u):=\delta (1-\Delta_D)^{-1}u+f_1,\quad u\in B^t(\Omega),
\]
satisfies \eqref{sem1} provided that $\eta:=\delta \mathbf{c}<\mathbf{\hat{c}}^{-1}$.

\section*{Acknowledgement}
This work was supported by National Key Research and Development Programs of China (No. 2023YFA1009103), JSPS KAKENHI Grant Numbers JP25K17280, JP23KK0049, NSFC (No. 92570106) and Science and Technology Commission of Shanghai Municipality (No. 23JC1400501).


\end{document}